\newtheorem{prop}{Proposition}[section]
\newtheorem{thm}[prop]{Theorem}
\newtheorem{cor}[prop]{Corollary}
\newtheorem{lem}[prop]{Lemma}
\theoremstyle{definition}
\newtheorem{defn}[prop]{Definition}
\newtheorem{rem}[prop]{\it Remark}
\newtheorem{thmdefn}[prop]{Theorem-Definition}
\newtheorem*{conj}{Conjecture}
\newtheorem{mthm}{Theorem}
\newtheorem*{claim*}{Claim}
\newcommand{\bP}{\mathbb{P}}
\newcommand{\bC}{\mathbb{C}}
\newcommand{\bR}{\mathbb{R}}
\newcommand{\bQ}{\mathbb{Q}}
\newcommand{\bZ}{\mathbb{Z}}
\newcommand{\bN}{\mathbb{N}}
\newcommand{\tX}{\widetilde{X}}
\newcommand{\tG}{\widetilde{G}}
\newcommand{\cY}{\mathcal{Y}}
\newcommand{\cO}{\mathcal{O}}
\newcommand{\cI}{\mathcal{I}}
\newcommand{\cF}{\mathcal{F}}
\newcommand{\cJ}{\mathcal{J}}
\newcommand{\cD}{\mathcal{D}}
\newcommand{\fa}{\mathfrak{a}}
\newcommand{\fm}{\mathfrak{m}}
\newcommand{\va}{\vec{a}}
\newcommand{\bcF}{\bar{\mathcal{F}}}
\newcommand{\Supp}{\mathrm{Supp}}
\newcommand{\supp}{\mathrm{supp}}
\newcommand{\mult}{\mathrm{mult}}
\newcommand{\lct}{\mathrm{lct}}
\newcommand{\Pic}{\mathrm{Pic}}
\newcommand{\vol}{\mathrm{vol}}
\newcommand{\ord}{\mathrm{ord}}
\newcommand{\Gr}{\mathrm{Gr}}
\newcommand{\pr}{\mathrm{pr}}
\newcommand{\rd}{\mathrm{d}}
\newcommand{\Bs}{\mathrm{Bs}}
\newcommand{\Val}{\mathrm{Val}}
\newcommand{\Y}{Y_{\bullet}}
\newcommand{\W}{W_{\vec{\bullet}}}
\newcommand{\V}{V_{\vec{\bullet}}}
\newcommand{\M}{M_{\vec{\bullet}}}
\numberwithin{equation}{section}
\title{Seshadri constants and K-stability of Fano manifolds}
\author{Hamid Abban}
\author{Ziquan Zhuang}
\date{}
\begin{document}

\address{\emph{Hamid Abban}
\newline
\textnormal{Department of Mathematical Sciences, Loughborough University, Loughborough LE11 3TU, UK
\newline
 \texttt{h.abban@lboro.ac.uk}}}

\address{\emph{Ziquan Zhuang}
\newline 
\textnormal{Department of Mathematics, MIT, Cambridge, MA, 02139, USA
\newline
\texttt{ziquan@mit.edu}}}

\maketitle

\begin{abstract} We give a lower bound of the $\delta$-invariants of ample line bundles in terms of Seshadri constants. As applications, we prove the uniform K-stability of infinitely many families of Fano hypersurfaces of arbitrarily large index, as well as the uniform K-stability of most families of smooth Fano threefolds of Picard number one.
\end{abstract}

\section{Introduction}
Existence of K\"{a}hler-Einstein metrics on Fano manifolds is detected by K-stability: 
a Fano manifold admits a K\"{a}hler-Einstein metric if and only if it is K-polystable \cites{CDS, Tian-YTD}. However, deciding whether a given Fano manifold is K-polystable is a quite challenging problem. We refer to the recent survey by Xu \cite{Xu-survey} for details on the subject and its development. 
A uniform approach to checking K-stability was proposed recently by the authors in \cite{AZ-K-adjunction}, which offers an inductive approach to K-stability and the skeleton of its proof relies on lifting the calculation of the so-called $\delta$-invariants (see Section \ref{sec:K-defn}) to certain flags of subvarieties by adjunction. One particularly useful tool that \cite{AZ-K-adjunction} provided is a K-stability criterion that only involves 
the existence of a linear system $|L|$ 
satisfying a simple numerical condition, so that through each point there is a curve given by complete intersection of divisors in $|L|$ (see \cite{AZ-K-adjunction}*{Theorem\,1.2}). In this article, we provide an even stronger criterion using the Seshadri constant, an invariant that was originally introduced by Demailly \cite{Dem-Seshadri} to measure the local positivity of line bundles.

\begin{mthm}[Theorem \ref{thm:delta and Seshadri}]\label{main-1}
Let $X$ be a projective variety of dimension $n\ge 2$ and let $L$ be an ample line bundle on $X$. Let $x\in X$ and let $S=H_1\cap\cdots\cap H_{n-2}\subseteq X$ be a complete intersection surface passing through $x$, where each $H_i\in |L|$. Assume that $S$ is integral, and smooth at $x$. Then
\[
\delta_x(L)\ge \frac{n+1}{(L^n)}\cdot \varepsilon_x(L|_S).
\]
For a precise description of the equality cases we refer to the statement of Theorem \ref{thm:delta and Seshadri}.
\end{mthm}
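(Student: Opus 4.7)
The plan is to use the adjunction technique for $\delta$-invariants from \cite{AZ-K-adjunction} to reduce the $n$-dimensional problem to a two-dimensional one on $S$, where the Seshadri constant supplies the bound. The proof proceeds in two stages.

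\emph{Reduction to the surface.} Set $Z_k := H_1 \cap \cdots \cap H_k$ for $k=0,\ldots,n-2$, so $Z_0 = X$ and $Z_{n-2} = S$. I apply the adjunction inequality of \cite{AZ-K-adjunction} iteratively with respect to each divisor $H_{k+1} \subset Z_k$. The key input at step $k$ is that $H_{k+1}|_{Z_k} \sim L|_{Z_k}$, so the $S$-invariant
\[
S_{L|_{Z_k}}(H_{k+1}) \;=\; \frac{1}{(L|_{Z_k})^{n-k}}\int_0^1 \vol((1-t)L|_{Z_k})\,dt \;=\; \frac{1}{n-k+1}
\]
is explicit. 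Coupling this with the refinement construction of \cite{AZ-K-adjunction} produces a bound for $\delta_x(L)$ in terms of the $\delta$-invariant at $x$ of a refined multi-graded linear series on $Z_{k+1}$. After $n-2$ iterations one arrives at a refined linear series $\W$ on $S$ whose expected volume is controlled by $(L|_S)^2 = L^n$, accumulating a combinatorial factor that will eventually contribute the $(n+1)$ in the statement.

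\emph{Surface estimate via Seshadri.} Let $\pi\colon \tS \to S$ denote the blowup of $x\in S$ with exceptional divisor $E$. The hypothesis $\varepsilon := \varepsilon_x(L|_S)$ ensures that $\pi^*(L|_S)-tE$ is nef for $t \in [0,\varepsilon]$, and hence $\vol(\pi^*(L|_S) - tE) = L^n - t^2$ on that interval. For any divisorial valuation $v$ on $S$ centered at $x$, the comparison $v \geq v(\fm_x)\cdot\ord_E$ together with concavity of the volume function upper-bounds $S_{L|_S}(v)$ by a quantity of the form $C \cdot A_S(v)\cdot L^n/\varepsilon$. Applied to $\W$, this lower-bounds $\delta_x(\W)$, and combined with the factors accumulated in Stage~1 yields $\delta_x(L) \geq (n+1)\varepsilon/L^n$.

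\emph{Main difficulty and equality.} The principal technical obstacle is Stage~1: the refined multi-graded series $\W$ on $S$ must be set up to track orders of vanishing along all of $H_1,\ldots,H_{n-2}$ simultaneously, and the $S$-invariant estimates at each step have to be compared carefully so that the constants combine to the exact factor $n+1$ rather than merely something $\ge 3$. For the equality assertion referenced in the theorem statement, I expect equality to force each adjunction step in Stage~1 to be tight --- pinning down the optimal valuation's center with respect to the flag $(H_i)$ --- together with the Seshadri constant on $S$ being computed by $\ord_E$ itself, i.e.\ there is no integral curve $C\subset S$ through $x$ attaining a smaller ratio $(L|_S\cdot C)/\mult_x C$.
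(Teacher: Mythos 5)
Your Stage 1 is essentially the paper's reduction. The paper proceeds by a clean induction on dimension: apply \cite{AZ-K-adjunction}*{Lemma 4.6} once to get
\[
\delta_x(L)\ge \min\left\{n+1,\ \tfrac{n+1}{n}\,\delta_x(L|_{H_1})\right\},
\]
then invoke the inductive hypothesis for $L|_{H_1}$ on $H_1$; iterating from $n$ down to $2$ accumulates the factor $\frac{n+1}{n}\cdot\frac{n}{n-1}\cdots\frac{4}{3}=\frac{n+1}{3}$ multiplying the surface bound $\delta_x(L|_S)\ge\frac{3}{(L^n)}\varepsilon_x(L|_S)$. (The paper does \emph{not} build a single multi-graded series tracking all $H_i$ simultaneously; it uses one $H_i$ at a time, which sidesteps the bookkeeping you anticipate as the ``principal technical obstacle.'' You also need, and the paper supplies, the observation $\varepsilon_x(L|_S)\le(L^n)$ so that the $\min$ with $n+1$ never bites.)

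The genuine gap is in Stage 2, the surface case, which is where the paper's technical weight actually sits (Lemma \ref{lem:surface delta>=Seshadri}). The comparison $v\ge v(\fm_x)\cdot\ord_E$ gives, for a valuation $v$ centered at $x$, a \emph{lower} bound $S(L|_S;v)\ge v(\fm_x)\,S(L|_S;E)$, whereas you need an \emph{upper} bound on $S(L|_S;v)$ in terms of $A_S(v)$. More fundamentally, that comparison plus concavity of volume only yields a bound of the shape $\alpha$-type — roughly a factor of $2$ rather than the sharp $3$. It has no way to detect the effective divisors on $S$ through $x$ of high multiplicity, which is what the Seshadri constant on $S$ (as opposed to the generic comparison to $\ord_E$) is designed to control. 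The paper instead refines the complete linear series of $L|_S$ by $E$ on the blowup, applies \cite{AZ-K-adjunction}*{Corollary 3.4} to reduce to valuations on the curve $E\cong\bP^1$, computes the fixed part $F(\W)$ via Zariski decomposition, and then needs the nontrivial calculus inequality
\[
3a\int_0^b(2x-g(x))\,g(x)\,\rd x\le 4\left(\int_0^b g(x)\,\rd x\right)^2
\]
for bounded concave $g$ with $g(x)=x$ on $[0,a]$ (Lemma \ref{lem:calculus center=pt}), applied to $g(x)=\vol_{T|E}(\pi^*L-xE)$, to show $S(L;E)+\deg F(\W)\le\frac{2}{\lambda}$. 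None of this is captured by the sketch; without it, you cannot obtain the constant $3$ at $n=2$, and a fortiori not $n+1$.

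Your discussion of equality is also off in the surface case: equality is not simply ``the Seshadri constant is computed by $\ord_E$.'' The paper's precise dichotomy (Lemma \ref{lem:surface delta>=Seshadri}) is either $\varepsilon=\tau=\sqrt{(L^2)}$, or $\varepsilon\tau=(L^2)$ with a unique irreducible curve $C$ through $x$ such that $L\equiv\tau C$ (and then that $C$, not $E$, is the unique minimizer). Extracting this requires the equality analysis in Lemma \ref{lem:calculus center=pt} together with Lemma \ref{lem:S-inv of div on X}.
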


This result enables us to give strong estimate of the $\delta$-invariants of many Fano varieties. One major application is to prove uniform K-stability for a large class of smooth hypersurfaces. Before stating the result, we recall the following folklore conjecture; see \cite{Xu-survey}*{Part\,3}.

\begin{conj} 
Any smooth Fano hypersurface $X\subseteq\bP^{n+1}$ of degree $d\ge 3$ is K-stable.
\end{conj}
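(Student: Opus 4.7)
The plan is to combine an iterated application of the adjunction formula from \cite{AZ-K-adjunction} along the flag
\[ X \supset H_1 \supset H_1\cap H_2 \supset \cdots \supset H_1\cap\cdots\cap H_{n-2} = S \]
with a surface-level bound that relates a refined $\delta$-invariant on $S$ to the Seshadri constant $\varepsilon_x(L|_S)$.

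\emph{Step 1 (iterated adjunction).} At the $k$-th step one works in the ambient variety $X_k := H_1\cap\cdots\cap H_k$ of dimension $n-k$ and restricts to $X_{k+1} = X_k\cap H_{k+1}$. Since $H_{k+1}\in|L|$, the elementary volume computation
\[ \vol_{X_k}\bigl(L|_{X_k} - t\,H_{k+1}|_{X_k}\bigr) \;=\; (1-t)^{n-k}(L^n) \qquad (t\in[0,1]) \]
gives $S_{L|_{X_k}}(H_{k+1}|_{X_k}) = 1/(n-k+1)$, while the log discrepancy is $A_{X_k}(H_{k+1}|_{X_k}) = 1$ (choosing $H_i$ so that each $X_k$ is smooth at $x$, which is possible since $S$ is). Analogous formulas hold for the refined series inherited from the previous stages. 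After $n-2$ applications of the adjunction formula of \cite{AZ-K-adjunction} one obtains a refined multi-graded linear series $\W^S$ on $S$ and a bound of the form
\[ \delta_x(L) \;\geq\; \min\bigl\{\,n+1,\ \delta_x(\W^S)\,\bigr\}. \]

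\emph{Step 2 (Seshadri bound on the surface).} On the smooth surface $S$ at the smooth point $x$ I would bound $\delta_x(\W^S)$ from below by $\tfrac{n+1}{(L^n)}\varepsilon_x(L|_S)$, which, since $\varepsilon_x(L|_S)\le (L^n)$, is the effective part of the minimum. The Seshadri constant enters through a Newton--Okounkov body argument for the flag at $x$ on $S$: after blowing up $x$, $\varepsilon_x(L|_S)$ is the supremum of $t$ such that $\pi^*L|_S - tE$ is nef, and this controls uniformly the mass that sections of the refined series can place at $x$, i.e. the $S$-invariants of all valuations centered there. The factor $n+1$ arises as usual from the fact that the associated concave transform is the graph of an affine function on a triangle of the correct size.

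\emph{Main obstacle.} The hard part will be Step 2: translating the Seshadri bound for the complete linear series $|L|_S|$ into a bound for the refined multi-graded series $\W^S$, whose Newton--Okounkov body is higher-dimensional and encodes vanishing along the entire flag. A careful concave-transform computation should do it, but the combinatorial bookkeeping of the multi-grading is delicate. Equally subtle is the description of the equality case: equality should force $\varepsilon_x(L|_S)$ to be realized by a specific curve $C\subset S$ through $x$ that is compatible with the filtration coming from the optimal valuation, and pinning this down is what allows the precise characterization promised in the stronger form of the theorem.
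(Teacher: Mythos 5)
The statement you were handed is a \emph{conjecture}, presented in the paper as an open folklore problem; the paper does not prove it, and its partial result in this direction (Theorem \ref{thm:hypersurface}) covers only smooth Fano hypersurfaces of Fano index $r\ge 3$ and dimension $n\ge r^3$. Your write-up is in fact not an attempt on the conjecture at all: it is an outline of the auxiliary inequality $\delta_x(L)\ge\frac{n+1}{(L^n)}\varepsilon_x(L|_S)$ of Theorem \ref{thm:delta and Seshadri}. That inequality is indeed proved by roughly your two-step scheme (iterated adjunction via \cite{AZ-K-adjunction}*{Lemma 4.6} down to a surface section, then a Seshadri bound on the surface), but it is only a tool. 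To deduce K-stability of any given hypersurface one must still \emph{lower-bound} $\varepsilon_x(L|_S)$ on a suitable complete-intersection surface $S$, and that is where the paper's real work lies: it reduces to subvarieties of dimension $\ge 1$ via \cite{AZ-K-adjunction}*{Lemmas 4.23 and 4.25}, bounds $\tau_x(L)$ at a very general point by intersection-theoretic arguments, propagates that bound down to a Picard-number-one surface section using Lemmas \ref{lem:high mult div when dim>=4} and \ref{lem:high mult div when dim=3} together with a Noether--Lefschetz theorem of \cite{DGF-Noether-Lefschetz}, and finally converts the $\tau$-bound into an $\varepsilon$-bound via Lemma \ref{lem:Seshadri*pseudoeff}. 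None of this appears in your proposal, and without some such input nothing about K-stability of hypersurfaces follows; even with all of it, the conjecture remains open outside the range $r\ge 3$, $n\ge r^3$.

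As a sketch of Theorem \ref{thm:delta and Seshadri} itself, your Step~2 is also not the paper's argument. Lemma \ref{lem:surface delta>=Seshadri} does not rest on a Newton--Okounkov body computation for a flag on $S$; instead one blows up $x$, refines the complete linear series by the exceptional divisor $E$, carries $S$-invariants across the refinement via Lemma \ref{lem:compare S after refinement}, and reduces the whole inequality to the explicit calculus statement of Lemma \ref{lem:calculus center=pt} applied to the restricted volume function $g(t)=\vol_{T|E}(\pi^*L-tE)$, where the Seshadri constant enters because $g(t)=t$ on $[0,\varepsilon_x(L)]$. Your remark that ``the factor $n+1$ arises from a concave transform on a triangle'' does not substitute for that computation, and the equality analysis on which the K-stability applications genuinely depend (Lemma \ref{lem:S-inv of div on X}, Lemma \ref{lem:equality in adjunction}, Corollary \ref{cor:equality rho=1}) goes well beyond merely identifying a Seshadri-computing curve.
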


For general Fano hypersurfaces, this conjecture follows from the K-stability of the Fermat hypersurfaces \cites{Tian-Fermat,AGP,Z-equivariant}, together with the openness of the K-stable locus in smooth families \cites{Oda-openness,Don-openness,Xu-quasimonomial,BLX-openness}. For arbitrary hypersurfaces, the conjecture is known to be true when the Fano index is at most two \cites{Fuj-alpha,LX-cubic-3fold,AZ-K-adjunction}, or when the dimension is at most $4$ \cite{Liu-cubic-4-fold}. Using Theorem \ref{main-1} and some careful study of Seshadri constants, we are able to extend this to a much larger class of hypersurfaces:

\begin{mthm}[Theorem \ref{thm:hypersurface}]\label{main-2}
Let $X\subseteq \bP^{n+1}$ be a smooth Fano hypersurface of Fano index $r\ge 3$ and dimension $n\ge r^3$. Then $X$ is uniformly K-stable.
\end{mthm}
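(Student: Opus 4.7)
The plan is to apply Theorem~\ref{main-1} with $L := \mathcal{O}_X(1)$, the hyperplane class. Using $(L^n) = d := n+2-r$ and $-K_X = rL$, the theorem gives
\[
\delta_x(-K_X) \;=\; \frac{1}{r}\,\delta_x(L) \;\geq\; \frac{n+1}{r(n+2-r)}\cdot\varepsilon_x(L|_S)
\]
for any smooth complete-intersection surface $S = X \cap \Pi$ through $x$, where $\Pi \subset \mathbb{P}^{n+1}$ is a $3$-dimensional linear subspace. Since $\tfrac{r(n+2-r)}{n+1} < r$, uniform K-stability reduces to producing, for every $x \in X$, such a 3-plane $\Pi \ni x$ for which $S$ is smooth at $x$ and $\varepsilon_x(L|_S) \geq r$; this gives the uniform bound $\delta_x(-K_X) \geq \tfrac{n+1}{n+2-r} > 1$.

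To do this, I plan to choose $\Pi$ generically in the Grassmannian $T := \Gr(3, T_x\mathbb{P}^{n+1})$ of 3-planes through $x$, which has dimension $3(n-2)$. Bertini guarantees that a generic $\Pi$ yields $S$ smooth at $x$. The Seshadri bound $\varepsilon_x(L|_S) \geq r$ is equivalent to the absence on $S$ of any irreducible \emph{Seshadri-reducing} curve through $x$, that is, an irreducible $C \subset S$ with $x \in C$ and $\deg C < r \cdot \mult_x C$. The aim is to show, via a dimension count, that the locus of $\Pi \in T$ for which $S = X \cap \Pi$ contains such a curve is a proper subvariety of $T$.

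For the count, fix a pair $(e, m)$ of positive integers with $e < rm$ and let $V_{e, m}$ be the parameter space of irreducible curves $C \subset X$ through $x$ with $\deg C = e$ and $\mult_x C = m$; let $k_C \in \{1, 2, 3\}$ denote $\dim \langle C \rangle \subset \mathbb{P}^{n+1}$. For fixed $C$, the set of 3-planes $\Pi \in T$ containing $C$ has dimension $(3 - k_C)(n-2)$. The bad locus in $T$ therefore has dimension at most
\[
\max_{(e, m, k)}\bigl(\dim V_{e, m, k} + (3 - k)(n-2)\bigr),
\]
which should be strictly less than $\dim T = 3(n-2)$, i.e.\ $\dim V_{e, m, k} < k(n-2)$ for every admissible triple. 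The case $k = 1$ forces $e = m = 1$, and the family of lines on $X$ through $x$ has dimension $r - 2$, which is $< n - 2$ since $n \geq r^3 > r$. The cases $k \in \{2, 3\}$ require Hilbert-scheme dimension estimates for irreducible curves of bounded slope $\deg / \mult_x$ lying on a smooth degree-$d$ hypersurface in $\mathbb{P}^{n+1}$.

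The hardest step will be the case $k = 3$ (curves whose linear span equals the ambient $\Pi$), where each $C$ determines a unique $\Pi$, so one must bound $\dim V_{e, m, 3}$ for all admissible $(e, m)$. The assumption $n \geq r^3$ is expected to be used sharply here, through a combination of upper bounds on Hilbert schemes of space curves in $\mathbb{P}^{n+1}$ and the codimension imposed by containment in the degree-$d$ hypersurface. Once the full dimension count is established, a generic $\Pi \ni x$ gives $\varepsilon_x(L|_S) \geq r$, whence $\delta_x(-K_X) > 1$ uniformly in $x \in X$, proving that $X$ is uniformly K-stable.
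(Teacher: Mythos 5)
Your strategy is to apply Theorem~\ref{main-1} directly and then show by a dimension count that a generic $3$-plane $\Pi\ni x$ gives $\varepsilon_x(L|_S)\ge r$ for every $x\in X$. This route is different from the paper's, and unfortunately it has a fatal gap at the very step you flag as the hardest.

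The obstacle is that Seshadri-reducing curves on $S$ need not come from curves on $X$ of controlled dimension: they can come from \emph{divisors} on $X$. If $D\sim_{\bQ}L$ is an effective $\bQ$-divisor on $X$ with $\mult_xD=\mu$, then for \emph{every} $3$-plane $\Pi\ni x$ the cycle $D|_S$ is an effective curve on $S$ through $x$ of degree $d$ and multiplicity (at least, and generically exactly) $\mu$, giving slope $d/\mu$ at best. When $\mu>d/r$ this curve is Seshadri-reducing, and since $D$ is fixed on $X$ and restricts to every $S$, the ``bad locus'' in $T$ is all of $T$, not a proper subvariety. So the dimension count on $V_{e,m,k}$ cannot succeed for such points $x$ -- the families $V_{ed,\,\mu,\,3}$ coming from restricting a single divisor already sweep out the whole Grassmannian of $3$-planes. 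Smooth Fano hypersurfaces do admit such ``Eckardt-type'' points in general (they are exactly the points with $\tau_x(L)>d/r$), so this is not a removable edge case. Relatedly, the case $k=3$ requires uniform Hilbert-scheme bounds over an unbounded family of $(e,m)$, and you have not indicated how those bounds would be obtained; there is no reason to expect $\dim V_{e,m,3}$ to stay below $3(n-2)$ as $e\to\infty$.

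The paper avoids both problems by a quite different chain of reductions: it first invokes the criterion of \cite{AZ-K-adjunction}*{Lemmas 4.23 and 4.25} to replace the goal ``$\delta_x(X)>1$ for all $x$'' with ``$\delta_Z(X)\ge\frac{n+1}{n}$ for every subvariety $Z$ of dimension $\ge1$''. For a \emph{very general} point $x\in Z$ one can then control the pseudo-effective threshold $\tau_x(L)\le\sqrt d+1$ (by a deformation/Hilbert-scheme argument à la \cite{EKL-Seshadri}). Then, instead of a dimension count for curves, the paper passes to a Picard-number-one threefold $Y=X\cap\Pi$ (Lefschetz) and a Picard-number-one surface $S\in|2L_Y\otimes\fm_x|$ (a Noether--Lefschetz theorem with base point), and propagates the bound on $\tau_x$ from $X$ down to $Y$ and $S$ via Lemmas~\ref{lem:high mult div when dim>=4} and~\ref{lem:high mult div when dim=3}. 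Finally, the Picard-number-one hypothesis converts a bound on $\tau_x$ on $S$ into a bound on $\delta_x(L|_S)$ via Corollary~\ref{cor:surface delta Pic=Z} (which rests on the Picard-number-one identity $\varepsilon_x\cdot\tau_x=(L^2)$ from Lemma~\ref{lem:Seshadri*pseudoeff}), and then \cite{AZ-K-adjunction}*{Lemma 4.6} lifts the bound back up to $X$. The numerical condition $n\ge r^3$ enters through these $\tau$-inequalities, not through Hilbert-scheme estimates for space curves. In short, the paper controls $\tau_x$ rather than $\varepsilon_x$, works with very general points on positive-dimensional subvarieties rather than all points, and uses Picard-number-one cuts rather than a dimension count; your proposal would need all three of these ingredients to be repaired.
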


For smooth Fano manifolds, uniform K-stability is equivalent to K-stability as a combination of the analytic works \cites{CDS,Tian-YTD,BBJ-variational}\footnote{Postscript note: an algebraic proof is now available by the very recent work \cite{LXZ-HRFG}}. Our proof of the above theorem is completely algebraic. In particular, even for general hypersurfaces of Fano index $\ge 3$ in the given dimensions, this is perhaps the first algebraic proof of their uniform K-stability (see \cite{AZ-K-adjunction} for the small index cases).

The next application concerns smooth Fano threefolds of Picard number one. They are classified by Iskovskikh into seventeen families; see \cite{Fano-book}. Some of them, such as $\bP^3$, the quadric $Q$, and the Fano threefold $V_5$ of index two and degree $5$, have infinite automorphism groups and therefore are not K-stable. It is also well-known that not all Fano threefolds of degree $22$ are K-stable \cite{Tian-K-stability-defn}. We show that in most of the remaining degrees, the Fano threefolds are uniformly K-stable. This is new when the Fano threefold has index one and degree at least $8$. It also provides a unified and purely algebraic proof for all the sporadic cases that were previously known \cites{AGP,Der-finite-cover,Fuj-alpha,LX-cubic-3fold,Z-cpi,AZ-K-adjunction}.

\begin{mthm}[Theorem \ref{thm:Fano-3-fold}] \label{main-3}
Let $X$ be a smooth Fano threefold of Picard number one. Assume that 
$(-K_X)^3\neq 18,22$ and $X\neq \bP^3,Q$ or $V_5$. Then $X$ is uniformly K-stable.
\end{mthm}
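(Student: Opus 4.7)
The plan is to apply Theorem \ref{main-1} family by family and reduce uniform K-stability to an estimation of Seshadri constants on a suitable hyperplane section of $X$. Writing $-K_X=rH$ with $H$ the ample generator of $\mathrm{Pic}(X)\cong\bZ$ and $r\in\{1,2\}$ the Fano index, we take $L=H$ in Theorem \ref{main-1}: for each $x\in X$ and each integral surface $S\in|H|$ smooth at $x$,
\[
\delta_x(-K_X)\;=\;\tfrac{1}{r}\,\delta_x(H)\;\ge\;\frac{4}{r\,(H^3)}\,\varepsilon_x(H|_S).
\]
Thus it suffices to exhibit, for each $x\in X$, a surface $S=S_x\in|H|$ as above satisfying the uniform strict inequality $\varepsilon_x(H|_{S_x})>r\,(H^3)/4$. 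By adjunction, $S$ is a smooth del Pezzo surface of degree $(H^3)\in\{1,2,3,4\}$ in the four index-two families $V_d$ (with $H|_S=-K_S$), and a smooth polarized K3 surface of degree $2g-2\in\{2,4,6,8,10,12,14,16\}$ in the eight index-one families $X_{2g-2}$, $g\in\{2,\ldots,9\}$.

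\textbf{Carrying out the estimates.} For each of these twelve families I would proceed in two steps. First, verify that through every $x\in X$ there passes an integral $S\in|H|$ smooth at $x$; where $|H|$ is base-point free this is a Bertini argument, while for the low-genus index-one families $X_2$ and $X_4$ one analyzes the morphism defined by $|H|$ and treats its base locus by hand. Second, bound the Seshadri constant via
\[
\varepsilon_x(H|_S)\;=\;\inf_{C\subset S,\,C\ni x}\frac{H\cdot C}{\mult_x C},
\]
appealing to the classical geometry of $(-1)$-curves on del Pezzo surfaces, respectively of low-degree curves (lines, conics, \ldots) on K3 sections of Fano threefolds, as classified by Iskovskikh and Mukai. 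In the higher-degree families a generic $S$ already produces a Seshadri constant comfortably above the threshold $r(H^3)/4$; in the lower-degree families one must choose $S$ non-generically, adapted to the position of $x$ relative to the finitely many lines and conics of $X$.

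\textbf{Main obstacle.} The principal difficulty arises when the threshold $r(H^3)/4$ approaches the general upper bound $\sqrt{(H^3)}$ on $\varepsilon_x(H|_S)$, which occurs precisely when $(H^3)\ge 16/r^2$; this includes the borderline retained cases $V_4$ and $X_{16}$ and explains why the families $X_{18}$ and $X_{22}$ are excluded from the statement. In the borderline retained cases a crude Seshadri bound does not suffice, and one must either exploit the equality description alluded to in the refined form of Theorem \ref{main-1} or choose $S_x$ carefully so as to avoid each potential extremal curve through $x$; verifying, using family-specific geometric information, that no genuinely destabilizing configuration arises in the twelve retained families is the heart of the proof.
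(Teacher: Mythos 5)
Your high-level strategy — restrict $L=H$ to a hyperplane section surface $S\in|H|$ through $x$ and feed a Seshadri estimate into Theorem~\ref{main-1} — is indeed the approach taken in the paper, and your identification of $V_4$ and $X_{16}$ as the borderline families (and of $X_{18}$, $X_{22}$ as being beyond the reach of the method) is correct. However, there are two substantive gaps.

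First, the core of your plan — producing for every $x$ a surface $S_x$ with the \emph{strict} inequality $\varepsilon_x(H|_{S_x})>r(H^3)/4$ — is impossible in exactly the two borderline families you single out. For $V_4$ the threshold is $2$ and $\varepsilon_x(H|_S)\le\sqrt{(H|_S^2)}=2$; for $X_{16}$ the threshold is $4$ and $\varepsilon_x(H|_S)\le\sqrt{16}=4$. So no choice of $S_x$ can achieve strict inequality; the only way forward is the equality analysis of Theorem~\ref{thm:delta and Seshadri}, which is why the paper develops an entire auxiliary section (Lemma~\ref{lem:S=n/(n+1)T}, Lemma~\ref{lem:equality in adjunction}, Corollary~\ref{cor:equality rho=1}) before it can handle $V_4$ and $X_{16}$. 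Your ``or choose $S_x$ carefully so as to avoid each potential extremal curve'' is a dead end, and deferring the rest to ``family-specific geometric information'' hides what is actually the bulk of the argument: ruling out divisorial centers via Lemma~\ref{lem:S-inv of div on X}, ruling out $A_X(v)<\tfrac{n-1}{n+1}\delta(L)T(L;v)$ via an $\alpha$-invariant bound (for $V_4$) or explicit line-multiplicity facts (for $X_{16}$), and then killing the $\eta(L;v)$ alternative via multiplicity/log-canonical-threshold estimates on complete-intersection ideals.

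Second, and more fundamentally for the index-one families: your proposal has no mechanism to control the Picard rank of the K3 section $S$. The paper first proves a Noether--Lefschetz theorem with a base-point constraint (Theorem~\ref{thm:Noether-Lefschetz}, via a Lefschetz-pencil argument and a dimension count on the set of points $y$ with $x\in T_yX$) to guarantee that a very general $S\in|{-}K_X|$ through $x$ has $\rho(S)=1$ when $(H^3)\ge 6$. Once $\rho(S)=1$, it does \emph{not} estimate $\varepsilon_x(H|_S)$ by classifying low-degree curves as you suggest; instead it bounds $\tau_x(H|_S)\le 4$ by an arithmetic-genus computation on curves in $|m H_S|$, and then converts this to a Seshadri bound via the identity $\varepsilon_x\cdot\tau_x=(L^2)$ on a $\rho=1$ surface (Lemma~\ref{lem:Seshadri*pseudoeff}). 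Without the Picard-rank control the K3 can carry extremal curves you cannot see, and without the $\varepsilon\tau$ identity a direct curve-by-curve Seshadri estimate is unmanageable. Note also that the Noether--Lefschetz step genuinely fails in low degree (e.g.\ quartic threefolds with generalized Eckardt points), which is precisely why the paper treats $(H^3)\le 4$ in index one by the trivial bound $\varepsilon_x\ge 1$ and why the subtler route is needed only from degree $6$ up.
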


\subsection{Structure of the paper} We set the notation and gather some preliminary results in Section~\ref{sec:pre}. The main technical result of this paper, Theorem~\ref{main-1}, which relates Seshadri constants to stability thresholds, is contained in Section~\ref{sec:sesh-delta}. In Section~\ref{sec:hyp}, we apply this to obtain the first application, Theorem~\ref{main-2}, which concerns the uniform K-stability of hypersurfaces. Finally, in Section~\ref{sec:3folds} we present the other application, Theorem~\ref{main-3}, which concerns the uniform K-stability of Fano threefolds.

\subsection*{Acknowledgements}  We are grateful to Fabian Gundlach, Yuji Odaka, Artie Prendergast-Smith and Chenyang Xu for helpful discussion.  We would also like to thank the referees for careful reading and helpful comments.
HA is supported by EPSRC grants EP/T015896/1 and EP/V048619/1. ZZ is partially supported by NSF Grant DMS-2055531.

\section{Preliminary}\label{sec:pre}

\subsection{Notation and conventions}

We work over $\bC$. Unless otherwise specified, all varieties are assumed to be normal and projective. A pair $(X,\Delta)$ consists of a variety $X$ and an effective $\bQ$-divisor $\Delta$ such that $K_X+\Delta$ is $\bQ$-Cartier. The notions of klt and lc singularities are defined as in \cite{Kol-mmp}*{Definition 2.8}. If $\pi:Y\to X$ is a projective birational morphism and $E$ is a prime divisor on $Y$, then we say $E$ is a divisor over $X$. A valuation on $X$ will mean a valuation $v\colon \bC(X)^*\to \bR$ that is trivial on $\bC^*$. We write $C_X(E)$ (resp.\ $C_X(v)$) for the center of a divisor (resp.\ valuation) and $A_{X,\Delta}(E)$ (resp.\ $A_{X,\Delta}(v)$) for the log discrepancy of the divisor $E$ (resp.\ the valuation $v$) with respect to the pair $(X,\Delta)$ (see \cites{JM-valuation,BdFFU-valuation}). We write $\Val_{X,\Delta}^*$ for the set of nontrivial valuations $v$ on $X$ such that $(X,\Delta)$ is klt at the center of $v$ and $A_{X,\Delta}(v)<\infty$. For any valuation $v$ and any linear series $V$ we denote by $\cF_v$ the filtration on $V$ given by $\cF_v^\lambda V = \{s\in V\,|\,v(s)\ge \lambda\}$. Let $(X,\Delta)$ be a klt pair, let $Z\subseteq X$ be a closed subset and let $D$ be an effective divisor (or an ideal sheaf) on $X$, we denote by $\lct_Z(X,\Delta;D)$ the largest number $\lambda\ge 0$ such that the non-lc locus of $(X,\Delta+\lambda D)$ does not contain $Z$.

\subsection{K-stability and stability thresholds} \label{sec:K-defn}

In this section, we recall the definition of K-stability through stability thresholds.

\begin{defn}[\cites{FO-delta,BJ-delta}] \label{defn:invariants}
Let $(X,\Delta)$ be a projective pair, let $Z\subseteq X$ be a subvariety, and let $L$ be an ample line bundle on $X$. Let $m>0$ be an integer such that $H^0(X,mL)\neq 0$.
\begin{enumerate}[wide]
    \item An $m$-basis type $\bQ$-divisor of $L$ is a $\bQ$-divisor of the form
    \[
    D=\frac{1}{mN_m}\sum_{i=1}^{N_m} \{s_i=0\}
    \]
    where $N_m=h^0(X,mL)$ and $s_1,\cdots,s_{N_m}$ is a basis of $H^0(X,mL)$. We define $\delta_m(L)$ (resp.\ $\delta_{Z,m}(L)$) to be the largest number $\lambda\ge 0$ such that $(X,\Delta+\lambda D)$ is lc (resp.\ lc at the generic point of $Z$) for every $m$-basis type $\bQ$-divisor $D$ of $(X,\Delta)$.
    \item Let $v$ be a nontrivial valuation on $X$. We define
    \[
        T_m(L;v) =\frac{\max\{v(D)\,|\,D\in |mL|\}}{m}
    \]
    and set $T(L;v)=\lim_{m\to\infty} T_m(L;v)$ (it is usually called the pseudo-effective threshold). We say that $v$ is of linear growth if $T(L;v)<\infty$ (this is the case when $v$ is divisorial or $v\in\Val^*_{X,\Delta}$; see \cite{BSKM-linear-growth}*{Section 2.3} and \cite{BJ-delta}*{Section 3.1}). For such valuations we set
    \[
        \vol(L;v\ge t)=\lim_{m\to \infty}\frac{\dim \{s\in H^0(X,mL)\,|\,v(s)\ge mt\}}{m^{\dim X}/(\dim X)!}
    \]
    and $S(L;v)=\frac{1}{\vol(L)}\int_0^\infty \vol(L;v\ge t) \rd t$. If $E$ is a divisor over $X$, we define $S(L;E):=S(L;\ord_E)$, $T(L;E):=T(L;\ord_E)$, etc.
    \item Assume that $(X,\Delta)$ is klt (or klt at the generic point of $Z$ in the local case). The local and global stability thresholds (or $\delta$-invariant) are defined to be
    \[
    \delta_Z(L):=\inf_{v\in\Val^*_{X,\Delta}, Z\subseteq C_X(v)} \frac{A_{X,\Delta}(v)}{S(L;v)}, \quad \delta(L):=\inf_{v\in\Val^*_{X,\Delta}} \frac{A_{X,\Delta}(v)}{S(L;v)}.
    \]
    Clearly $\delta(L)=\inf_{x\in X}\delta_x(L)$. We say that a valuation $v\in\Val^*_{X,\Delta}$ computes $\delta_Z(L)$ (or $\delta(L)$) if it achieves the above infimum. Such valuations always exists by \cite{BJ-delta}*{Theorem E}. We will sometimes write $\delta_Z(X,\Delta;L)$ and $\delta(X,\Delta;L)$ if the pair $(X,\Delta)$ is not clear from the context. By \cite{BJ-delta}*{Theorem A}, we have $\lim_{m\to \infty} \delta_m(X,\Delta)=\delta(X,\Delta)$. By the same proof in \emph{loc.\ cit.\ }we also have $\lim_{m\to \infty} \delta_{Z,m}(X,\Delta)=\delta_Z(X,\Delta)$.
\end{enumerate}
\end{defn}

\begin{thmdefn}[\cites{FO-delta,BJ-delta,Fuj-criterion,Li-criterion}]
Let $X$ be a Fano variety. Then it is K-semistable (resp.\ uniformly K-stable) if and only if $\delta(-K_X)\ge 1$ (resp.\ $\delta(-K_X)>1$).
\end{thmdefn}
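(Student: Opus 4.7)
The plan is to combine two streams of prior results: the \emph{valuative criterion} for K-(semi)stability due to Fujita and Li, and the \emph{limit formula} $\delta(L)=\lim_{m\to\infty}\delta_m(L)$ together with the valuative formula for $\delta(L)$ due to Fujita--Odaka and Blum--Jonsson. Neither half is reproven from scratch; the Theorem-Definition is really the composite statement that these two pictures agree.

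First I would recall the non-Archimedean reformulation of K-stability. By the work of Boucksom--Hisamoto--Jonsson, K-semistability (resp.\ uniform K-stability) of a Fano variety $X$ is equivalent to the non-negativity (resp.\ coercivity) of the Ding/Donaldson--Futaki invariant on all test configurations. Fujita's computation of the non-Archimedean $J$- and entropy functionals in terms of the invariants $A_X(E)$ and $S(-K_X;E)$ for divisors $E$ over $X$ (extended to arbitrary $v\in\Val^*_X$ via $A_X(v)$ and $S(-K_X;v)$) then gives the valuative criterion: $X$ is K-semistable if and only if
\[
A_X(v)\ge S(-K_X;v) \quad\text{for every } v\in\Val^*_X,
\]
and uniformly K-stable if and only if there exists $\varepsilon>0$ such that $A_X(v)\ge (1+\varepsilon)\cdot S(-K_X;v)$ for every such $v$. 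In the semistable direction it suffices (by approximation) to test on divisorial valuations, which is Fujita's original statement; Li's contribution is the reverse implication via intersection-theoretic positivity on test configurations.

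Next I would bring in the $\delta$-invariant side. By definition, $\delta(-K_X)=\inf_{v\in\Val^*_X} A_X(v)/S(-K_X;v)$, so the semistable equivalence is tautological once the valuative criterion is in place: $A_X(v)\ge S(-K_X;v)$ for every $v$ is literally $\delta(-K_X)\ge 1$. For the uniform case, one direction is again immediate (if $\delta(-K_X)\ge 1+\varepsilon$, then $A_X(v)\ge (1+\varepsilon)S(-K_X;v)$). The converse is where one needs the Blum--Jonsson theory: the $\delta$-invariant is computed by quasi-monomial valuations of finite log discrepancy (\cite{BJ-delta}*{Theorem E}), and the functional $v\mapsto A_X(v)/S(-K_X;v)$ is lower semicontinuous on the appropriate space of valuations, so $\delta(-K_X)>1$ upgrades automatically to a uniform gap $A_X(v)\ge \delta(-K_X)\cdot S(-K_X;v)$, matching the valuative criterion.

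The main obstacle is not in the bookkeeping but in the deep inputs that have to be cited: (i) the identification of the non-Archimedean Ding/Mabuchi functional with $A_X-S_{-K_X}$ (Fujita--Li, using asymptotic Riemann--Roch and the minimal model program to handle the Ding vs.\ K-energy comparison for Fano varieties), and (ii) the limit $\delta(L)=\lim_m \delta_m(L)$ together with the existence of a minimizer, which requires the convergence of filtered log canonical thresholds and the lsc/properness analysis in \cite{BJ-delta}. Once these are accepted, the proof of the Theorem-Definition itself is the short chain of equivalences sketched above.
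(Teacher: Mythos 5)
Your reconstruction matches the route the paper implicitly takes: this Theorem-Definition is stated without proof, deferring entirely to the four cited references, and the composite argument you sketch (non-Archimedean/valuative criterion of Fujita and Li, combined with the $\delta$-invariant limit and valuative-minimizer theory of Fujita--Odaka and Blum--Jonsson) is exactly what those papers establish. One small imprecision in your last step: once $\delta(-K_X)=\inf_v A_X(v)/S(-K_X;v)>1$, the uniform gap $A_X(v)\ge \delta(-K_X)\cdot S(-K_X;v)$ is automatic from the definition of the infimum and requires no lower semicontinuity or minimizer--existence input; where the Blum--Jonsson analysis genuinely enters is in reconciling the infimum over divisorial valuations (Fujita's original formulation) with the infimum over all of $\Val^*_X$, not in producing the $\varepsilon$.
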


\subsection{Seshadri constants, movable thresholds and pseudo-effective thresholds}

Let $X$ be a variety and let $L$ be an ample line bundle on $X$. Let $v$ be a valuation of linear growth on $X$ whose center $C_X(v)$ has codimension at least two. Then the movable threshold $\eta(L;v)$ is defined as
\[\begin{split}
\eta(L;v)=\sup\{&\eta>0\,|\,\text{ for some } m\in\bN,\\
&\,\Bs|\cF_v^{m\eta} H^0(X,mL)| \text{ has codimension at least two}\}.
\end{split}\]
Note that $\eta(L;v)>0$. Indeed, if we choose some sufficiently large integer $m>0$ such that $\cO_X(mL)\otimes \cI_{C_X(v)}$ is globally generated, then as the base locus of this linear system has codimension at least $2$ we get $\eta(L;v)\ge \frac{1}{m}v(\cI_{C_X(v)})>0$.

Let $x\in X$ be a smooth point, let $\pi\colon \tX\to X$ be the blowup of $x$ and let $E$ be the exceptional divisor. Then the Seshadri constant $L$ at $x$ is defined to be (see \cite{Dem-Seshadri} or \cite{Laz-positivity-1}*{Chapter 5})
\[
\varepsilon_x(L):= \sup\{t\ge 0 \,|\,\pi^*L-tE \mbox{ is nef}\} = \inf_{C\subseteq X} \frac{(L\cdot C)}{\mult_x C},
\] 
where the infimum is taken over all irreducible curves $C\subseteq X$ passing through $x$. 
We also denote the pseudo-effective threshold $T(L;E)$ (resp.\ movable threshold $\eta(L;E)$) in this case by $\tau_x(L)$ (resp.\ $\eta_x(L)$). Note that $\varepsilon_x(L)=\eta_x(L)$ when $X$ is a surface. By definition it is easy to see that $\varepsilon_x(L)\le \eta_x(L)\le \tau_x(L)$ and
\[
\tau_x(L)=\sup\{\mult_x D\,|\,0\le D\sim_\bQ L\}.
\]
As we have $(\pi^*L-\varepsilon_x(L)E)^n\ge 0$ (where $n=\dim X$), it follows that $\sqrt[n]{(L^n)}\ge \varepsilon_x(L)$. It is also well-known that $\tau_x(L)\ge \sqrt[n]{(L^n)}$ (see e.g. \cite{Laz-positivity-2}*{Lemma 10.4.12}). When $L$ is very ample, we also have $\eta_x(L)\le \sqrt{(L^n)}$: otherwise we can find two effective $\bQ$-divisors $D_1,D_2\sim_\bQ L$ that have no common components such that $\mult_x D_i>\sqrt{(L^n)}$, and we have 
\[
(L^n)=(D_1\cdot D_2\cdot H_1\cdots\cdots\cdot H_{n-2})\ge \mult_x D_1\cdot \mult_x D_2 >(L^n)
\]
for some general members $H_1,\cdots,H_{n-2}$ of the linear system $|L\otimes \fm_x|$, a contradiction. For later use, we recall some more properties of these invariants.

\begin{lem} \label{lem:unique mult=tau divisor}
Let $L$ be an ample line bundle on a variety $X$ and let $v$ be a valuation of linear growth on $X$ whose center has codimension at least two. Assume that $X$ is $\bQ$-factorial of Picard number one and $\eta(L;v)<T(L;v)$. Then there exists a unique irreducible $\bQ$-divisor $D_0\sim_\bQ L$ on $X$ such that $v(D_0)>\eta(L;v)$. Moreover, we have $v(D_0)=T(L;v)$ and for any effective $\bQ$-divisor $D\sim_\bQ L$ such that $v(D)\ge \eta(L;v)$ we have 
\[
D\ge \frac{v(D) - \eta(L;v)}{T(L;v)-\eta(L;v)}\cdot D_0.
\]
\end{lem}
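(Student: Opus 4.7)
Write $\eta=\eta(L;v)$ and $T=T(L;v)$. The plan exploits the rigidity forced by $X$ having Picard number one: every prime divisor on $X$ is $\bQ$-linearly proportional to $L$. The key technical observation is a \emph{pencil argument}: if $D,D'\sim_\bQ L$ are effective with $v(D),v(D')>\eta$ and no common prime components, then for $m$ sufficiently divisible $mD,mD'\in|mL|$ are integral divisors with defining sections $s,s'\in H^0(X,mL)$ satisfying $v(s),v(s')\ge\lambda$ for some $\lambda>m\eta$. Because $v$ is a valuation, every member $as+bs'$ of the pencil $\langle s,s'\rangle$ also has $v$-value $\ge\lambda$, so the whole pencil lies inside $\cF_v^\lambda H^0(X,mL)$. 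Since $D,D'$ share no components, the base locus of the pencil equals the proper scheme-theoretic intersection $\{s=0\}\cap\{s'=0\}$, which has codimension two. This forces $\Bs|\cF_v^\lambda H^0(X,mL)|$ to have codimension at least two, violating the definition of $\eta$.

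Next I produce $D_0$. For each prime divisor $P$ on $X$, $\bQ$-factoriality and Picard number one yield a unique $b_P>0$ with $P\sim_\bQ b_P L$, so $\tfrac{1}{b_P}P\sim_\bQ L$. The pencil argument then shows that at most one prime $P$ can satisfy $v(P)/b_P>\eta$, since two distinct such primes would be disjoint witnesses. Existence follows from $\eta<T$: pick any effective $D\sim_\bQ L$ with $v(D)>\eta$ and decompose $D=\sum a_iP_i$ with $\sum a_ib_i=1$; the weighted average $v(D)=\sum a_ib_i\cdot(v(P_i)/b_i)$ exceeds $\eta$, forcing some component $P_0$ to satisfy $v(P_0)/b_0>\eta$. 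Set $D_0:=\tfrac{1}{b_0}P_0\sim_\bQ L$; this is the required irreducible $\bQ$-divisor.

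For the coefficient estimate and the identity $v(D_0)=T$, take any effective $D\sim_\bQ L$ with $v(D)\ge\eta$ and decompose $D=cD_0+\sum_{i\neq 0}a_iP_i$, where $c\ge 0$, the $P_i\neq P_0$ are distinct primes, and $c+\sum_{i\neq 0}a_ib_i=1$. By the uniqueness part, each $P_i\neq P_0$ satisfies $v(P_i)/b_i\le\eta$, hence
\[
v(D)=c\,v(D_0)+\sum_{i\neq 0}a_ib_i\cdot\tfrac{v(P_i)}{b_i}\le c\,v(D_0)+\eta(1-c),
\]
which rearranges to $c\ge (v(D)-\eta)/(v(D_0)-\eta)$. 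Since $c\le 1$, this also gives $v(D)\le v(D_0)$ for every effective $D\sim_\bQ L$; letting $v(D)\to T$ yields $v(D_0)=T$. Plugging $v(D_0)=T$ back in, $D\ge cD_0\ge\tfrac{v(D)-\eta}{T-\eta}D_0$, finishing the proof. The main delicate step is the pencil argument, where one must check both that the base locus of the pencil coincides with the proper intersection of the two divisors (which requires the no-common-components hypothesis to rule out divisorial components) and that every pencil member has $v$-value at least $\lambda$ (using that $v$ is a valuation).
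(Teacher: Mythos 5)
Your proof is correct and follows the same route as the paper's: the uniqueness is established by observing that two distinct irreducible candidates would lie in $\cF_v^{m\lambda}H^0(X,mL)$ for some $\lambda>m\eta$ and force the base locus of that linear system to have codimension $\ge 2$, contradicting the definition of $\eta$; the existence and the coefficient bound are obtained by decomposing $D$ into irreducible $\bQ$-divisors proportional to $L$ and estimating $v(D)$ as a convex combination. The pencil is a slight overkill (one only needs the two sections to lie in $\cF_v^\lambda$), and you spell out the identity $v(D_0)=T$ in more detail than the paper, but otherwise this is the same argument.
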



\begin{proof}
For ease of notation, let $\eta=\eta(L;v)$ and $T=T(L;v)$. We first prove the uniqueness. Suppose that there exist two such irreducible $\bQ$-divisors $D_0$, $D_1$. Let $\lambda=\min\{v(D_0),v(D_1)\}>\eta$. Then for some sufficiently divisible integer $m$ the base locus of $|\cF_v^{m\lambda}H^0(mL)|$ has codimension at least two since $mD_0,mD_1\in |\cF_v^{m\lambda}H^0(mL)|$. Hence $\eta\ge \lambda$, a contradiction. This proves the uniqueness of $D_0$.

For the existence, let $D\sim_\bQ L$ be an effective $\bQ$-divisor on $S$ such that $v(D)>\eta$ (which exists as $T(L;v)>\eta(L;v)$). Since $\rho(X)=1$, we may write $D=\sum \lambda_i D_i$ where $\lambda_i>0$, $\sum \lambda_i=1$ and each $D_i\sim_\bQ L$ is irreducible. As $v(D)>\eta$, at least one of the $D_i$ satisfies $v(D_i)>\eta$. This proves the existence of $D_0$. Since such $D_0$ is unique, it is then clear from the definition of pseudo-effective threshold that $v(D_0)=T$ and moreover we have $v(D_i)\le \eta$ for all $i>0$. Thus $v(D-\lambda_0 D_0)\le (1-\lambda_0)\eta$. Solving this inequality gives $\lambda_0\ge \frac{v(D) - \eta}{T-\eta}$.
\end{proof}

\begin{lem} \label{lem:Seshadri*pseudoeff}
Let $S$ be a $\bQ$-factorial surface of Picard number one and let $x\in S$ be a smooth point. Then we have $\varepsilon_x(L)\cdot \tau_x(L)=(L^2)$.
\end{lem}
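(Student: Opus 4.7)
The plan is to work on the blowup $\pi\colon \tilde S \to S$ at $x$, with exceptional divisor $E$. Since $x$ is a smooth point of $S$ and $\rho(S)=1$, the surface $\tilde S$ is still $\bQ$-factorial with $\rho(\tilde S)=2$, and $\{\pi^*L, E\}$ is a basis of $N^1(\tilde S)_{\bR}$ with intersection numbers $(\pi^*L)^2=(L^2)$, $\pi^*L\cdot E=0$, and $E^2=-1$. By the respective definitions, $\varepsilon := \varepsilon_x(L)$ is the supremum of those $t\ge 0$ such that $\pi^*L - tE$ is nef, and $\tau := \tau_x(L)$ is the supremum of those $t$ such that $\pi^*L - tE$ is pseudo-effective.

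Next, I would identify the two boundary rays of each cone explicitly:
\[
\mathrm{Nef}(\tilde S) = \bR_{\ge 0}\langle \pi^*L,\ \pi^*L - \varepsilon E\rangle,\qquad \overline{\mathrm{Eff}}(\tilde S) = \bR_{\ge 0}\langle E,\ \pi^*L - \tau E\rangle.
\]
The nef description uses the defining property of $\varepsilon$ together with the observation that $\pi^*L$ is already a boundary ray (being nef with $\pi^*L\cdot E=0$ and $E$ effective). The pseudo-effective description uses the defining property of $\tau$ together with the fact that any pseudo-effective class must pair non-negatively with the nef class $\pi^*L$; in the given basis these two constraints cut out precisely the claimed cone.

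To conclude, I would invoke the duality $\mathrm{Nef}(\tilde S) = \overline{\mathrm{Eff}}(\tilde S)^{\vee}$ under the intersection pairing, which on a normal projective surface follows from Kleiman's criterion combined with the identification of curves and divisors in $N^1 \cong N_1$. In the two-dimensional setting at hand, this duality gives a bijection between extremal rays of the nef cone and of the pseudo-effective cone under which paired rays are orthogonal. Since $\pi^*L$ already pairs to zero with $E$, the remaining extremal nef ray $\pi^*L - \varepsilon E$ must be orthogonal to $\pi^*L - \tau E$, so
\[
0 \;=\; (\pi^*L - \varepsilon E)\cdot(\pi^*L - \tau E) \;=\; (L^2) - \varepsilon\tau,
\]
which yields the desired identity. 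The main point to verify carefully is the identification of the second extremal pseudo-effective ray, which is where the Picard rank one hypothesis on $S$ is essential: without it, $\overline{\mathrm{Eff}}(\tilde S)$ could have more than two extremal rays, and the orthogonality argument above would no longer pin down $\varepsilon\tau$ exactly.
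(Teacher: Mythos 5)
Your proof is correct and takes a genuinely different route from the paper. The paper's argument splits into the cases $\varepsilon_x(L)=\tau_x(L)$ and $\varepsilon_x(L)<\tau_x(L)$; in the second case it invokes Lemma~\ref{lem:unique mult=tau divisor} to produce a unique irreducible $\bQ$-divisor $D_0\sim_\bQ L$ with $\mult_x D_0=\tau_x(L)$, and then runs a curve-by-curve estimate using the formula $\varepsilon_x(L)=\inf_C (L\cdot C)/\mult_x C$, distinguishing whether a given curve $C$ lies in $\Supp(D_0)$ or not. Your argument instead passes to the blowup and reads off $\varepsilon_x(L)$ and $\tau_x(L)$ as the two nontrivial endpoints of the nef and pseudo-effective segments along the line $\pi^*L-tE$, then invokes the duality $\mathrm{Nef}(\tilde S)=\overline{\mathrm{Eff}}(\tilde S)^\vee$ (Kleiman, together with the identification $\overline{NE}=\overline{\mathrm{Eff}}$ on a surface) and the rank-two fact that dual extremal rays pair to zero. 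This is cleaner in several respects: it avoids Lemma~\ref{lem:unique mult=tau divisor} entirely, it needs no case split, and it makes transparent exactly where the Picard-number-one hypothesis enters (it forces $\rho(\tilde S)=2$, so both cones have exactly two extremal rays and the orthogonality pins down $\varepsilon_x(L)\tau_x(L)$). What the paper's approach buys instead is an explicit divisor $D_0$ realizing the pseudo-effective threshold, which feeds into the equality analysis elsewhere in the paper; but for the lemma itself your route is the more conceptual one. Two small points you should spell out in a final write-up: (i) $\tilde S$ remains $\bQ$-factorial and projective with $\rho(\tilde S)=2$ because $x$ is a smooth point; and (ii) after observing that $\overline{\mathrm{Eff}}(\tilde S)$ lies in the half-plane $\{D:D\cdot\pi^*L\ge 0\}$ and that $E$ is effective with $E\cdot\pi^*L=0$, conclude explicitly that $\bR_{\ge 0}E$ is one boundary ray of $\overline{\mathrm{Eff}}(\tilde S)$ (your phrase ``these two constraints cut out precisely the claimed cone'' is slightly quick there), and likewise check $\pi^*L\cdot(\pi^*L-\tau E)=(L^2)>0$ so that the orthogonal pairing of extremal rays is the one you claim.
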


\begin{proof}
Again let $\varepsilon=\varepsilon_x(L)$ and $\tau=\tau_x(L)$. Clearly $\varepsilon\le \tau$ by definition. If $\varepsilon=\tau$, then since $\tau\ge \sqrt{(L^2)}\ge \varepsilon$ we necessarily have $\varepsilon=\tau=\sqrt{(L^2)}$, hence $\varepsilon\tau=(L^2)$. Thus we may assume that $\varepsilon<\tau$. By Lemma \ref{lem:unique mult=tau divisor}, there exists a unique irreducible $\bQ$-divisor $D_0\sim_\bQ L$ on $S$ such that $\mult_x D_0=\tau$. Let $C\subseteq S$ be an irreducible curve passing through $x$. If $D_0$ is supported on $C$, then 
\[
\frac{(C\cdot L)}{\mult_x C}=\frac{(D_0\cdot L)}{\mult_x D_0}=\frac{(L^2)}{\tau},
\]
otherwise we have
\[
\frac{(C\cdot L)}{\mult_x C}=\frac{(C\cdot D_0)}{\mult_x C}\ge \mult_x D_0 = \tau.
\]
Since $\tau^2 \ge (L^2)$, by the definition of Seshadri constants we see that $\varepsilon=\frac{(L^2)}{\tau}$.
\end{proof}

\subsection{Restricted volumes}

We refer to \cite{ELMNP} for the original definition of the restricted volume $\vol_{X|Z}(L)$ of a divisor $L$ along a subvariety $Z$. For our purpose their most important properties are summarized in the following statement.

\begin{lem} \label{lem:restricted vol}
Let $L$ be an ample line bundle on a projective variety $X$ of dimension $n$, let $\pi\colon Y\to X$ be a birational morphism and let $E\subseteq Y$ be a prime divisor on $Y$. Then
\begin{equation} \label{eq:vol'=restricted vol}
    \frac{\rd}{\rd t} \vol(\pi^*L-tE) = -n\cdot  \vol_{Y|E}(\pi^*L-tE)
\end{equation}
for all $0\le t<T(L;E)$. Moreover, the function $t\mapsto \vol_{Y|E}(\pi^*L-tE)^{\frac{1}{n-1}}$ is concave on $[0,T(L;E))$. 
\end{lem}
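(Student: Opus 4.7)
The plan is to assemble two classical results from the theory of restricted volumes, one due essentially to Ein-Lazarsfeld-Mustata-Nakamaye-Popa (ELMNP) and the other a consequence of the Okounkov body construction of Lazarsfeld and Mustata. First I would reduce to the case where $Y$ is smooth and $E\subseteq Y$ is a smooth prime divisor by composing $\pi$ with a log resolution; all quantities in the statement are birationally invariant in the appropriate sense, so this does no harm.

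For the derivative formula the approach is to prove the integrated form
\[
\vol(\pi^*L - t_0 E) - \vol(\pi^*L - t_1 E) \;=\; n\int_{t_0}^{t_1} \vol_{Y|E}(\pi^*L - tE)\, \rd t
\]
for $0\le t_0\le t_1 < T(L;E)$, and then differentiate, invoking continuity of $\vol_{Y|E}$ on the big cone. The integrated form is established by chopping $[t_0,t_1]$ into subintervals of length $1/m$ and using the short exact sequences
\[
0\to H^0(Y, m\pi^*L - (k+1)E)\to H^0(Y, m\pi^*L - kE)\to H^0(E, (m\pi^*L - kE)|_E),
\]
recognising, after dividing by $m^n/n!$ and letting $m\to\infty$, the left hand side as the telescoping sum of global section dimensions and the right hand side as a Riemann sum for the integral via the Fujita-approximation characterisation of restricted volumes. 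This is essentially the content of ELMNP.

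For the concavity of $t\mapsto \vol_{Y|E}(\pi^*L - tE)^{1/(n-1)}$, I would use the restricted Okounkov body construction. Fix an admissible flag on $Y$ whose first member is $E$. Any big $\bR$-divisor $D$ with $E\not\subseteq B_+(D)$ is assigned a convex body $\Delta_{Y|E}(D)\subseteq \bR^{n-1}$ of Euclidean volume $\vol_{Y|E}(D)/(n-1)!$, and one has the Minkowski inclusion $\Delta_{Y|E}(D_1) + \Delta_{Y|E}(D_2)\subseteq \Delta_{Y|E}(D_1+D_2)$. The Brunn-Minkowski inequality in $\bR^{n-1}$ then gives super-additivity of $\vol_{Y|E}(\,\cdot\,)^{1/(n-1)}$, which together with $(n-1)$-homogeneity yields concavity of $\vol_{Y|E}(\,\cdot\,)^{1/(n-1)}$ on the open cone of big classes with $E\not\subseteq B_+$. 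Restricting to the affine path $t\mapsto \pi^*L - tE$ yields the claim on the open subset of $[0,T(L;E))$ where $E\not\subseteq B_+(\pi^*L - tE)$.

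The main obstacle I anticipate is the boundary behavior: as $t$ approaches $T(L;E)$, $\pi^*L - tE$ remains big but $E$ can enter the augmented base locus, where the Okounkov body argument degenerates and $\vol_{Y|E}$ drops to zero. The remedy is to extend both conclusions from the (relatively open) locus where $E\not\subseteq B_+$ to the whole half-open interval $[0,T(L;E))$ by continuity of the restricted volume on the big cone, again an ELMNP input; concavity on the open locus combined with vanishing elsewhere then assembles into concavity on the full interval, and the derivative formula persists there by the same continuity.
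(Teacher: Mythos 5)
Your proof takes essentially the same route as the paper, whose own proof is a two-line citation: \cite{LM-okounkov-body}*{Corollary C} (or \cite{BFJ-volume-C^1}) for the derivative formula and \cite{ELMNP}*{Theorem A} for the log-concavity. You sketch the arguments behind those citations (telescoping plus Fujita approximation for the integrated derivative identity; restricted Okounkov bodies plus Brunn--Minkowski for concavity), which is what the references do; passing to a log resolution so that $Y$ and $E$ are smooth is a legitimate reduction since $\vol(\pi^*L-tE)$ and $T(L;E)$ depend only on the valuation $\ord_E$, and the restricted volume is then pinned down by the derivative formula.

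One caveat about your closing paragraph. The assembly step you propose --- that concavity on the locus where $E\not\subseteq B_+$ together with vanishing elsewhere ``assembles into concavity on the full interval'' --- is not automatic: a nonnegative function that vanishes on $[0,a]$ and then grows concavely on $(a,b)$ (e.g.\ $\max(0,t-a)$) is \emph{not} concave, because the derivative jumps up at $a$. Moreover the endpoint you worry about is the wrong one: the potential failure of $E\not\subseteq B_+(\pi^*L-tE)$ is at $t=0$ (when $E$ is $\pi$-exceptional, since $B_+(\pi^*L)=\mathrm{Exc}(\pi)$), not near $T(L;E)$. What actually saves the argument is that the set $\{t\in[0,T(L;E)) : E\not\subseteq B_+(\pi^*L-tE)\}$ contains all of $(0,T(L;E))$: it contains an initial interval $(0,\varepsilon)$ because $\pi^*|mL|$ is base-point free (so $\vol(\pi^*L-tE)$ drops strictly at $t=0$, forcing $\vol_{Y|E}>0$ there); by the ELMNP characterization this locus is a sub-interval, and if it were $(0,a)$ with $a<T(L;E)$ then $\vol(\pi^*L-tE)$ would be constant on $[a,T(L;E))$, hence equal to its limit $0$ at $T(L;E)$, contradicting bigness at $t=a$. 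So the only possibly excluded point is $t=0$, which does no harm to concavity. With this repair your argument is complete and matches the paper's approach.
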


\begin{proof}
The equality \eqref{eq:vol'=restricted vol} follows from \cite{BFJ-volume-C^1} or \cite{LM-okounkov-body}*{Corollary C}. The concavity part follows from \cite{ELMNP}*{Theorem A}.
\end{proof}

We may then rewrite the formula of $S$-invariants using restricted volumes as follows.

\begin{lem} \label{lem:formula for S-inv thru restricted vol}
In the situation of Lemma \ref{lem:restricted vol}, we have
\[
S(L;E) =\frac{n}{(L^n)}\int_0^{T(L;E)} x\cdot \vol_{Y|E}(\pi^*L-xE) \rd x.
\]
\end{lem}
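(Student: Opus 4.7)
The plan is an integration by parts starting from the definition of $S(L;E)$. By Definition \ref{defn:invariants},
\[
S(L;E) = \frac{1}{\vol(L)}\int_0^\infty \vol(L;\ord_E\ge t)\,\rd t,
\]
and since $\pi$ is birational we have $\vol(L)=(L^n)$ and the standard identification
\[
\vol(L;\ord_E\ge t)=\vol(\pi^*L-tE)
\]
for $0\le t<T(L;E)$, with the integrand vanishing for $t\ge T(L;E)$ by definition of the pseudo-effective threshold. So the task reduces to showing
\[
\int_0^{T(L;E)}\vol(\pi^*L-tE)\,\rd t = n\int_0^{T(L;E)} t\cdot\vol_{Y|E}(\pi^*L-tE)\,\rd t.
\]

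To obtain this, I would integrate by parts, taking $u=\vol(\pi^*L-tE)$ and $\rd v=\rd t$. By Lemma \ref{lem:restricted vol}, the function $t\mapsto \vol(\pi^*L-tE)$ is differentiable on $[0,T(L;E))$ with derivative $-n\cdot\vol_{Y|E}(\pi^*L-tE)$, which is moreover continuous (in fact $\bigl(\tfrac{1}{n-1}\bigr)$-concave) there, so the integration by parts is justified. This yields
\[
\int_0^{T(L;E)}\vol(\pi^*L-tE)\,\rd t = \bigl[t\cdot\vol(\pi^*L-tE)\bigr]_0^{T(L;E)} + n\int_0^{T(L;E)} t\cdot\vol_{Y|E}(\pi^*L-tE)\,\rd t.
\]
The boundary term vanishes at $t=0$ trivially, and vanishes at $t=T(L;E)$ because $\vol(\pi^*L-tE)\to 0$ as $t\nearrow T(L;E)$ (this is the defining property of the pseudo-effective threshold, together with continuity of the volume function). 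Dividing through by $(L^n)$ gives the desired formula.

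There is not really a hard step here; the only point requiring a little care is the vanishing of the boundary term at $T(L;E)$, which one justifies by continuity of $\vol$ on the big cone together with the fact that $\pi^*L-T(L;E)\cdot E$ lies on the boundary of the pseudo-effective cone. Everything else is a direct application of Lemma \ref{lem:restricted vol} together with unwinding the definitions.
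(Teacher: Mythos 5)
Your proposal is correct and is exactly the paper's argument: the paper's proof simply states that the result follows from $S(L;E)=\frac{1}{(L^n)}\int_0^{T(L;E)}\vol(\pi^*L-tE)\,\rd t$, the identity \eqref{eq:vol'=restricted vol}, and integration by parts. You have merely spelled out the routine justification of the boundary term, which is fine.
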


\begin{proof}
By definition, $S(L;E)=\frac{1}{(L^n)}\int_0^{T(L;E)} \vol(\pi^*L-tE)\rd t$. Thus the statement follows from \eqref{eq:vol'=restricted vol} and integration by parts.
\end{proof}

\subsection{Filtered linear series and compatible divisors}

In this section, we briefly recall some definitions from \cite{AZ-K-adjunction}*{Sections 2.5-2.6}.

\begin{defn}
Let $L_1,\cdots,L_r$ be line bundles on $X$. An $\bN^r$-graded linear series $W_{\vec{\bullet}}$ on $X$ associated to the $L_i$'s consists of finite dimensional subspaces 
\[
W_{\va} \subseteq H^0(X, \cO_X(a_1 L_1+\cdots+a_r L_r))
\]
for each $\va\in \bN^r$ such that $W_{\vec{0}}=\bC$ and $W_{\va_1}\cdot W_{\va_2} \subseteq W_{\va_1+\va_2}$ for all $\va_1,\va_2\in \bN^r$. The support $\Supp(\W)\subseteq \bR^r$ of $W_{\vec{\bullet}}$ is defined as the closure of the convex cone spanned by all $\va\in\bN^r$ such that $W_{\va}\neq 0$. In this paper we only consider multi-graded linear series that have bounded support and contain ample series, see \cite{LM-okounkov-body}*{Section 4.3} or \cite{AZ-K-adjunction}*{Definition 2.11} for the precise definition. A filtration $\cF$ on $\W$ is given by the data of vector subspaces $\cF^\lambda W_{\va}\subseteq W_{\va}$ for each $\lambda\in\bR$ and $\va\in\bN^r$ such that $\cF^{\lambda_1} W_{\va_1}\cdot \cF^{\lambda_2} W_{\va_2} \subseteq \cF^{\lambda_1+\lambda_2} W_{\va_1+\va_2}$ for all $\lambda_i\in\bR$ and all $\va_i\in\bN^r$. We only consider linearly bounded filtrations, i.e. there are constants $C_1$ and $C_2$ such that $\cF^\lambda W_{\va}=W_{\va}$ for all $\lambda<C_1|\va|$ and $\cF^\lambda W_{\va}=0$ for all $\lambda>C_2|\va|$. Any valuation $v$ of linear growth on $X$ induces a filtration $\cF_v$ on $\W$ such that $\cF^\lambda_v W_{\va}=\{s\in W_{\va}\,|\,v(s)\ge \lambda\}$.
\end{defn}

If we think of $\W$ as an $\bN\times \bN^{r-1}$-graded linear series and view the first $\bN$-factor as the level grading, then we may define $m$-basis type $\bQ$-divisors of $\W$ as a $\bQ$-divisor of the form
\[
D=\frac{1}{mN_m}\sum_{i=1}^{N_m} \{s_i=0\}
\]
where $s_1,\cdots,s_{N_m}$ enumerate some basis of $W_{m,\va}$ for all $\va\in \bN^{r-1}$ (we call it an $m$-basis of $\W$) and $N_m=\sum_{\va} \dim W_{m,\va}$. Following \cite{AZ-K-adjunction}, we say that $D$ is compatible with a filtration $\cF$ on $\W$ if every $\cF^\lambda W_{m,\va}$ is spanned by some of the $s_i$. Let $\Delta(\W)=\{\va\in\bR^{r-1}\,|\,(1,\va)\in \Supp(\W)\}$. For any $\va\in \bQ^{r-1}$ in the interior of $\Delta(\W)$, we set
\[
\vol_{\W}(\va):=\lim_{m\to \infty} \frac{\dim W_{m,m\va}}{m^n/n!}\quad (n=\dim X)
\]
where the limit is taken over integers $m$ such that $m\va\in \bN^{r-1}$. By \cite{LM-okounkov-body}*{Corollary 4.22}, it extends a continuous function on the interior of $\Delta(\W)$, which we still denote by $\vol_{\W}(\cdot)$. For each $\va\in \bN^r$, we let $M_{\va}$ (resp.\ $F_{\va}$) be the movable (resp.\ fixed) part of $W_{\va}$. We define $F(\W):=\lim_{m\to\infty} F_m (\W)\in \mathrm{Div}(X)_\bR$, where
\[
F_m (\W) = \frac{1}{mN_m}\sum_{\va\in\bN^{r-1}} \dim (W_{m,\va})\cdot F_{m,\va}.
\]
Note that the limit exists by \cite{AZ-K-adjunction}*{Lemma-Definition 2.25}. As in \emph{loc.\ cit.}, we also set $c_1(\W):=\lim_{m\to \infty} c_1(D_m)\in {\rm NS}(X)_\bR$ (where $D_m$ is any $m$-basis type $\bQ$-divisor of $\W$) and $c_1(M_{\bullet}):=c_1(\W)-F(\W)$. 
Many of the invariants we define in Section \ref{sec:K-defn} also generalizes to this setting, see \cite{AZ-K-adjunction}*{Section 2.6}. For our purposes we recall the following.
\begin{enumerate}[wide]
 \item The pseudo-effective threshold $T(\W;\cF)$ of a filtration $\cF$ on $\W$ is defined as 
    \[
        T(\W;\cF):=\lim_{m\to \infty} \frac{T_m(\W;\cF)}{m} = \sup_{m\in \bN} \frac{T_m(\W;\cF)}{m}.
    \]
    where 
\[T_m(\W;\cF)=\sup\{\lambda\in\bR\,|\,\cF^\lambda W_{m,\va}\neq 0 \mbox{ for some } \va\}.\]
  \item We set $S(\W;\cF):=\lim_{m\to \infty} S_m(\W;\cF)$ where \[S_m(\W;\cF)= \frac{1}{mN_m} \sum_{\lambda,\va} \lambda\cdot \dim \Gr_\cF^\lambda W_{m,\va}.\]
    \item Given a closed subset $Z\subseteq X$, we define \[\delta_Z(\W,\cF):=\limsup_{m\to \infty}\delta_{Z,m}(\W,\cF),\] where $\delta_{Z,m}(\W,\cF)=\inf_D \lct_Z(X,\Delta;D)$ and the infimum runs over all $m$-basis type $\bQ$-divisors $D$ of $\W$ that are compatible with $\cF$.
\end{enumerate}

Our multi-graded linear series mostly come from the refinement of a complete linear series.

\begin{defn}[\cite{AZ-K-adjunction}*{Example 2.15}] \label{defn:refinement}
Let $L$ be a big line bundle on $X$ and let $\V$ be the complete linear series associated to $L$, i.e.\ $V_m=H^0(X,mL)$. Let $\pi\colon Y\to X$ be a birational morphism and let $F$ be a Cartier prime divisor on $Y$. The refinement of $\V$ by $F$ is the $\bN^2$-graded linear series $\W$ associated to $\pi^*L|_F$ and $-F|_F$ on $F$ given by
\[
W_{m,j}={\rm Im}(H^0(Y,m\pi^*L-jF)\to H^0(F,m\pi^*L|_F-jF|_F)).
\]
Note that any filtration $\cF$ on $\V$ naturally induces a filtration $\bcF$ on $\W$, i.e.\ $\bcF^\lambda W_{m,j}$ is the image of $\cF^\lambda V_m \cap H^0(Y,m\pi^*L-jF)$.
\end{defn}

\begin{lem} \label{lem:compare S after refinement}
In the above notation, we have $S(\V;\cF)=S(\W;\bcF)$.
\end{lem}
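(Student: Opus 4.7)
The plan is to establish the finite-level equality $S_m(\V;\cF)=S_m(\W;\bcF)$ for every sufficiently divisible $m$, and then pass to the limit.

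First I would identify $V_m=H^0(X,mL)=H^0(Y,m\pi^*L)$ via the birational morphism $\pi$, and introduce the auxiliary decreasing filtration $\cG^j V_m:=H^0(Y,m\pi^*L-jF)=\cF_{\ord_F}^j V_m$. The exact sequence coming from restriction to $F$, together with the very definition of the refinement, gives $W_{m,j}\cong \cG^j V_m/\cG^{j+1}V_m$, so that summing in $j$ telescopes to
\[
N'_m:=\sum_j\dim W_{m,j}=\dim V_m=N_m.
\]
Running the same argument with $\cF^\lambda V_m$ in place of $V_m$ (the filtration $\bcF$ on $\W$ being induced from $\cF$) yields
\[
\bcF^\lambda W_{m,j}\cong \bigl(\cF^\lambda V_m\cap \cG^j V_m\bigr)\big/\bigl(\cF^\lambda V_m\cap \cG^{j+1}V_m\bigr),
\]
and therefore
\[
\sum_j\dim \bcF^\lambda W_{m,j}=\dim \cF^\lambda V_m
\]
uniformly in $\lambda$, again by telescoping.

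To conclude, I would use the standard identity
\[
\sum_\lambda \lambda\cdot\dim\Gr_{\mathcal H}^\lambda U \;=\; \lambda_0\dim U+\int_{\lambda_0}^\infty \dim\mathcal H^\lambda U\,\rd\lambda
\]
for any linearly bounded decreasing filtration $\mathcal H$ on a finite-dimensional space $U$ and any $\lambda_0$ with $\mathcal H^{\lambda_0}U=U$ (a one-line integration by parts against the step function $\lambda\mapsto \dim \mathcal H^\lambda U$). Applying it with a common $\lambda_0=mC_1$ provided by the linear boundedness of $\cF$, and using Fubini to swap the finite sum over $j$ with the integral over $\lambda$, both $mN_m S_m(\V;\cF)$ and $mN'_m S_m(\W;\bcF)$ equal
\[
\lambda_0 N_m+\int_{\lambda_0}^\infty \dim\cF^\lambda V_m\,\rd\lambda.
\]
Since $N_m=N'_m$, this proves $S_m(\V;\cF)=S_m(\W;\bcF)$, and the lemma follows upon taking $m\to\infty$.

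There is no real obstacle here: the geometric content is concentrated in the short exact sequence defining $W_{m,j}$, and the rest is bookkeeping with the double filtration $(\cF,\cG)$ on $V_m$. The only point that requires a moment of care is that the same linear lower bound for $\cF$ serves as a lower bound for $\bcF$, which is immediate from the definition of $\bcF$ as the image filtration.
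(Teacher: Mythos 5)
Your argument is correct, and it takes a genuinely different route from the paper's. The paper invokes \cite{AZ-K-adjunction}*{Lemma 3.1} to produce a single basis $s_1,\dots,s_{N_m}$ of $V_m$ simultaneously compatible with $\cF$ and with $\cF_F$; the (nonzero) restrictions of the $s_i$ then form an $m$-basis of $\W$ compatible with $\bcF$, carrying the same $\cF$-levels $\lambda_i$, so the equality of the two level sums $\tfrac{1}{mN_m}\sum_i\lambda_i$ is read off at once. You dispense with the compatible-basis lemma entirely: the short exact sequence giving $W_{m,j}\cong\cG^jV_m/\cG^{j+1}V_m$ (and its filtered variant, using that the kernel of restriction on $\cF^\lambda V_m\cap\cG^jV_m$ is exactly $\cF^\lambda V_m\cap\cG^{j+1}V_m$) yields the telescoping identity $\sum_j\dim\bcF^\lambda W_{m,j}=\dim\cF^\lambda V_m$ uniformly in $\lambda$, and the integration-by-parts representation of $\sum_\lambda\lambda\dim\Gr_{\mathcal H}^\lambda$ converts this into $S_m(\V;\cF)=S_m(\W;\bcF)$ by Fubini. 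Both proofs are valid and of comparable length; the paper's is slightly shorter by outsourcing work to the cited lemma, whereas yours is self-contained and makes transparent that $S_m$ depends only on the dimension profile $\lambda\mapsto\dim\cH^\lambda$, which is preserved by the refinement.
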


\begin{proof}
It suffices to show that $S_m(\V;\cF)=S_m(\W;\bcF)$. By \cite{AZ-K-adjunction}*{Lemma 3.1}, we may find a basis $s_1,\dots,s_{N_m}$ of $V_m$ that is compatible with both $\cF$ and $\cF_F$, the filtration induced by $F$. By construction, they restrict to form an $m$-basis of $\W$ that is compatible with $\bcF$. Let $\lambda_i=\sup\{\lambda\,|\,s_i\in \cF^\lambda V_m\}$. Then by the definition of $S$-invariants it is easy to see that $S(\V;\cF)=\frac{1}{mN_m}\sum_{i=1}^{N_m} \lambda_i=S(\W;\bcF)$.
\end{proof}

For computations we often choose refinements that are almost complete \cite{AZ-K-adjunction}*{Definition 2.27}. 

\begin{defn}
Let $L$ be a big line bundle on $X$ and let $\W$ be an $\bN^r$-graded linear series. We say that $\W$ is \emph{almost complete} (with respect to $L$) if for every $\va\in {\rm int}(\Supp(\W))\cap \bQ^r$ and all sufficiently divisible integers $m$ (depending on $\va$), we have $|M_{m\va}|\subseteq|L_{m,\va}|$ for some $L_{m,\va}\equiv \ell_{m,\va}L$ and some $\ell_{m,\va}\in \bN$ such that 
\[
\frac{\dim W_{m\va}}{h^0(X,\ell_{m,\va}L)}=\frac{\dim M_{m\va}}{h^0(X,\ell_{m,\va}L)}\to 1
\]
as $m\to \infty$.
\end{defn}

In the surface case, all refinements as in Definition \ref{defn:refinement} are almost complete by \cite{AZ-K-adjunction}*{Lemma 4.10}. Another common example is the refinement of the complete linear series associated to an ample line bundle $L$ by some integral member $H\in |L|$, see \cite{AZ-K-adjunction}*{Example 2.28}.

\section{Seshadri constants and stability thresholds}\label{sec:sesh-delta}

In this section, we prove the following statement, giving lower bounds of stability thresholds in terms of Seshadri constants on complete intersection surfaces. This will be a key tool to verify K-stability of Fano varieties in subsequent sections.

\begin{thm} \label{thm:delta and Seshadri}
Let $X$ be a projective variety of dimension $n\ge 2$ and let $L$ be an ample line bundle on $X$. Let $x\in X$ be a smooth point and let $S=H_1\cap\cdots\cap H_{n-2}\subseteq X$ be a complete intersection surface passing through $x$, where each $H_i\in |L|$. Assume that $S$ is integral and is smooth at $x$. Then
\[
\delta_x(L)\ge \frac{n+1}{(L^n)}\cdot \varepsilon_x(L|_S).
\]
When equality holds, we have at least one of the following:
\begin{enumerate}
    \item $\varepsilon_x(L|_S)=\tau_x(L|_S)=\sqrt{(L^n)}=1$, and $\delta_x(L)$ is computed by any $H\in |L\otimes \fm_x|$, or
    \item $\varepsilon_x(L|_S)=\tau_x(L|_S)=\sqrt{(L^n)}>1$, and the center of any valuation $v$ that computes $\delta_x(L)$ has dimension $\dim C_X(v)\ge n-2$, or
    \item $\varepsilon_x(L|_S)\tau_x(L|_S)=(L^n)$, and every valuation that computes $\delta_x(L)$ is divisorial and induced by a prime divisor $G\subseteq X$ containing $x$ such that $S\not\subseteq G$ and $L\equiv\tau_x(L|_S)G$.
\end{enumerate}
\end{thm}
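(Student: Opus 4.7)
We combine the iterated refinement machinery of \cite{AZ-K-adjunction} with the surface Seshadri constant on $S$. Let $v\in\Val_X^*$ be a valuation with $x\in C_X(v)$; it suffices to show
\[
S(L;v)\;\le\;\frac{A(v)\,(L^n)}{(n+1)\,\varepsilon_x(L|_S)}.
\]

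Form the iterated refinement of the complete linear series of $L$ by $H_1,\ldots,H_{n-2}$: this produces an $\bN^{n-1}$-graded linear series $W_{\vec{\bullet}}$ on $S$ with $W_{m,\vec j}\subseteq H^0(S,(m-|\vec j|)L|_S)$. By $(n-2)$-fold iteration of Lemma~\ref{lem:compare S after refinement}, $S(L;v)=S(W_{\vec{\bullet}};\bar{\cF}_v)$ where $\bar{\cF}_v$ is the filtration induced by $v$; and by iterated use of \cite{AZ-K-adjunction}*{Lemma 4.10}, $W_{\vec{\bullet}}$ is almost complete on $S$, so each fiber $W_{m,\vec j}$ asymptotically exhausts $H^0(S,(m-|\vec j|)L|_S)$.

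On the surface $S$, the Seshadri constant $\varepsilon_x(L|_S)$ and pseudo-effective threshold $\tau_x(L|_S)$ control multiplicities at $x$: for any effective $\bQ$-divisor $D\sim_\bQ kL|_S$, $\mult_xD\le k\tau_x(L|_S)$; and for any curve $C\ni x$, $(L|_S\cdot C)\ge \varepsilon_x(L|_S)\cdot\mult_xC$. Combined with Izumi-type control of $v$ by multiplicity at $x$, these give a fiber-wise bound on the filtration levels of $\bar{\cF}_v$ on each $W_{m,\vec j}$, proportional to $(m-|\vec j|)A(v)/\varepsilon_x(L|_S)$ after normalization. Summing over $\vec j$ with weights $\dim W_{m,\vec j}\sim (m-|\vec j|)^2(L^n)/2$ and counting multiplicities $\binom{|\vec j|+n-3}{n-3}$ produces the beta-type integral
\[
\int_0^m(m-k)^3\, k^{n-3}\,dk \;\sim\; \frac{6\,(n-3)!\,m^{n+1}}{(n+1)!},
\]
which, after normalizing by $mN_m\sim m^{n+1}(L^n)/n!$, yields the factor $\tfrac{1}{n+1}$ in the target inequality. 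Lemma~\ref{lem:restricted vol} (concavity of restricted volumes) is then used to interpolate these estimates to all valuations of linear growth, not only divisorial ones.

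For the equality cases, the Seshadri step is tight only in two scenarios. Either $\varepsilon_x(L|_S)=\tau_x(L|_S)=\sqrt{(L^n)}$, which gives cases (1) and (2) depending on whether this common value equals $1$ and on $\dim C_X(v)$; or $\eta_x(L|_S)<\tau_x(L|_S)$, in which case Lemma~\ref{lem:unique mult=tau divisor} produces a unique $\bQ$-divisor $D_0\sim_\bQ L|_S$ with $\mult_xD_0=\tau_x(L|_S)$, and this $D_0$ must lift to a prime divisor $G\subseteq X$ with $x\in G$, $S\not\subseteq G$, and $L\equiv\tau_x(L|_S)\,G$ (case (3)); tightness of the concavity inequality then pins $v$ down as divisorial, induced by $G$. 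The principal obstacle is making the Seshadri-based fiber bound on $\bar{\cF}_v$ sufficiently precise so that summation over $\vec j$ delivers exactly the constant $(n+1)$ while each inequality retains an interpretable equality criterion.
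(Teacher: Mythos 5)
Your proposal has a genuine gap at its core step. You assert a ``fiber-wise bound on the filtration levels of $\bar{\cF}_v$ on each $W_{m,\vec j}$, proportional to $(m-|\vec j|)A(v)/\varepsilon_x(L|_S)$,'' invoking Izumi-type control and the Seshadri inequality $(L|_S\cdot C)\ge \varepsilon_x(L|_S)\mult_x C$. This does not follow. Izumi gives $v(f)\le C_v\cdot \ord_x(f)$ with a constant $C_v$ that is \emph{not} $A(v)$, and $\ord_x$ of sections of $H^0(S,kL|_S)$ is controlled by the pseudo-effective threshold $\tau_x(L|_S)$, not by $\varepsilon_x(L|_S)$; the Seshadri constant lower-bounds degree-by-multiplicity of \emph{curves}, which is a different quantity. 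Moreover, the bound must hold for all $v\in\Val_X^*$ with $x\in C_X(v)$, including valuations whose center is positive-dimensional and only passes through $x$; for such $v$, there is no direct multiplicity-at-$x$ control at all, and your argument gives nothing. Even in the case $C_X(v)=\{x\}$, tracking how the constant in the fiber-wise bound combines with the beta integral to produce exactly $\tfrac{n+1}{(L^n)}\varepsilon_x(L|_S)$ is precisely the unfinished part, as you yourself acknowledge. Finally, almost-completeness of the \emph{iterated} refinement $\W$ on $S$ is asserted via \cite{AZ-K-adjunction}*{Lemma 4.10}, but that lemma concerns refinements on surfaces by a single divisor; the intermediate refinements along $H_1,\dots,H_{n-2}$ live on higher-dimensional varieties and need the almost-completeness of \cite{AZ-K-adjunction}*{Example 2.28} (refinement by a member of $|L|$), not Lemma~4.10.

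The paper avoids all of this by not attempting a direct estimate of $S(L;v)$. It proves a sharp surface statement first (Lemma~\ref{lem:surface delta>=Seshadri}), using the refinement of $|L|$ by the exceptional divisor of the blowup of $x$, restricted volumes and Zariski decomposition, and a calculus inequality (Lemma~\ref{lem:calculus center=pt}) for concave functions; this is where the Seshadri constant genuinely enters, via $g(x)=\vol_{T|E}(\pi^*L-xE)=x$ on $[0,\varepsilon_x]$. The higher-dimensional statement is then deduced by induction using the adjunction estimate $\delta_x(L)\ge\min\{n+1,\frac{n+1}{n}\delta_x(L|_{H_1})\}$ from \cite{AZ-K-adjunction}*{Lemma~4.6}, together with the trivial bound $\varepsilon_x(L|_S)\le (L^n)$. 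Each inductive step preserves an equality criterion which, combined with the detailed surface analysis and Lemma~\ref{lem:S-inv of div on X}, yields the three equality cases. Your equality-case discussion suffers from the same gap: without a concrete inequality chain, you cannot say what ``tightness of the concavity inequality'' pins down. You would need to reconstruct something equivalent to \cite{AZ-K-adjunction}*{Lemma~4.6} and to the concavity calculus lemma to make your approach rigorous; at that point it essentially becomes the paper's proof.
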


As one might expect, the careful analysis of the equality cases in the above statement will be useful in proving \emph{uniform} K-stability in several cases. 

Note that an upper bound on the log canonical threshold in terms of Seshadri constants was studied in \cite{Odaka-Sano}. However, the relation in \ref{thm:delta and Seshadri} is in the opposite direction, which offers significantly more flexibility in estimating the $\delta$-invariant.

The proof of Theorem \ref{thm:delta and Seshadri} is by induction on the dimension, where the inductive step is based on \cite{AZ-K-adjunction}*{Lemma 4.6}. Apart from that, the heart of the proof is a detailed analysis of the surface case, where we can be even more precise about the equality cases:

\begin{lem} \label{lem:surface delta>=Seshadri}
Let $S$ be a surface and let $L$ be an ample line bundle on $S$. Let $x\in S$ be a smooth point. Then
\[
\delta_x(L)\ge \frac{3}{(L^2)}\cdot \varepsilon_x(L),
\]
and equality holds if and only if $\varepsilon_x(L)=\tau_x(L)=\sqrt{(L^2)}$, or $\varepsilon_x(L)\tau_x(L)=(L^2)$ and there exists a unique irreducible curve $C\subseteq X$ containing $x$ such that $L\equiv\tau_x(L)C$. Moreover, in the latter case, the curve $C$ is the only divisor that computes $\delta_x(L)$.
\end{lem}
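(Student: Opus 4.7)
The plan is to bound $A(v)/S(L;v) \ge 3\varepsilon/(L^2)$ for every $v \in \Val_S^*$ with $x \in C_S(v)$, by splitting on $\dim C_S(v)$, where $\varepsilon := \varepsilon_x(L)$ and $\tau := \tau_x(L)$.

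The first case will be when $C_S(v)$ is an irreducible curve $C$ through $x$; then $v = c\cdot\ord_C$ for some $c>0$, and after rescaling to $v = \ord_C$ we have $A(v) = 1$. By Lemma~\ref{lem:restricted vol} in dimension $2$, $u \mapsto \sqrt{\vol(L - uC)}$ is concave on $[0, T(L;C)]$ with slope $-(L\cdot C)/\sqrt{(L^2)}$ at the origin, hence $\sqrt{\vol(L-uC)} \le \sqrt{(L^2)} - u(L\cdot C)/\sqrt{(L^2)}$ and integration gives
\[
\int_0^{T(L;C)}\!\vol(L-uC)\,du \;\le\; \int_0^{(L^2)/(L\cdot C)}\!\left(\sqrt{(L^2)} - \tfrac{u(L\cdot C)}{\sqrt{(L^2)}}\right)^{\!2} du \;=\; \frac{(L^2)^2}{3(L\cdot C)}.
\]
Combined with $L\cdot C \ge \varepsilon\cdot\mult_x C \ge \varepsilon$ from the definition of $\varepsilon$, this gives $S(L;C) \le (L^2)/(3\varepsilon)$ and the required inequality. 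Equality forces $\sqrt{\vol(L-uC)}$ to be linear on $[0, T(L;C)]$, hence by the Hodge index theorem $L \equiv \lambda C$ with $\lambda = T(L;C) = (L^2)/\varepsilon$, together with $\mult_x C = 1$ and $L\cdot C = \varepsilon$; the identification $\lambda = \tau$ and the uniqueness of $C$ then follow from Lemma~\ref{lem:unique mult=tau divisor} applied to $\ord_E$, recovering the equality case $\varepsilon\tau = (L^2)$, $L \equiv \tau C$ of the Lemma.

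The other case is $C_S(v) = \{x\}$. Let $\pi\colon \tS\to S$ be the blowup of $x$ with exceptional $E$, and lift $v$ to $\tS$, centered on $E$. When $v = \ord_E$, Lemma~\ref{lem:formula for S-inv thru restricted vol} gives $S(L;E) = \tfrac{2}{(L^2)}\int_0^\tau t\,f(t)\,dt$ where $f(t) := \vol_{\tS|E}(\pi^*L - tE)$. The function $f$ is concave on $[0,\tau]$ (Lemma~\ref{lem:restricted vol}), equals $t$ on $[0,\varepsilon]$ (since $\pi^*L - tE$ is nef there and $E^2 = -1$), is bounded above by $t$ globally (by concavity and tangent at $0$), vanishes at $\tau$, and satisfies $\int_0^\tau f\,dt = (L^2)/2$ (from $\vol'(\pi^*L - tE) = -2 f(t)$ integrated on $[0,\tau]$). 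Maximizing $\int_0^\tau t\,f(t)\,dt$ over concave $f$ satisfying these constraints reduces to a piecewise-linear extremizer (a plateau $f \equiv \varepsilon$ on $[\varepsilon,c]$ followed by linear decrease to $0$ when $(L^2) \le 2\varepsilon\tau - \varepsilon^2$; a jump profile when $(L^2) > 2\varepsilon\tau - \varepsilon^2$), and direct algebraic manipulation yields $\int_0^\tau t\,f(t)\,dt \le (L^2)^2/(3\varepsilon)$, hence $A(E)/S(L;E) \ge 3\varepsilon/(L^2)$, with equality iff $\varepsilon = \tau = \sqrt{(L^2)}$. For $v$ centered at a point $y \in E$ with $v \ne c\cdot\ord_E$, the plan is to use the refinement $\W$ of $V_\bullet = \bigoplus_m H^0(S, mL)$ by $E$ (Definition~\ref{defn:refinement}) together with Lemma~\ref{lem:compare S after refinement} to rewrite $S(L;v) = S(\W;\bcF_v)$; via the Okounkov body of $\pi^*L$ with respect to the flag $(E, y)$ on $E \cong \bP^1$, a parallel inequality $\int_0^\tau f(t)^2\,dt \le (L^2)^2/(3\varepsilon)$ then yields the same lower bound, with strict inequality unless $\varepsilon = \tau$.

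The main obstacle will be the extremization problem underlying the second case: identifying the correct piecewise-linear maximizer of $\int t\,f$ (resp.\ $\int f^2$) among concave $f$ subject to the listed constraints, and executing the resulting sharp algebraic inequality. The auxiliary inequality $\varepsilon\tau \le (L^2)$---which holds on any surface by integrating the chord-from-below bound $f(t) \ge \varepsilon(\tau - t)/(\tau - \varepsilon)$ on $[\varepsilon,\tau]$ against $\int_\varepsilon^\tau f\,dt = ((L^2) - \varepsilon^2)/2$---dictates which extremal regime (plateau or jump) is active. Putting the two cases together yields the two stated equality situations, and in the $\varepsilon\tau = (L^2)$ case, Lemma~\ref{lem:unique mult=tau divisor} pins down the unique curve computing $\delta_x(L)$.
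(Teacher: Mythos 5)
Your strategy is close in spirit to the paper's---blow up $x$, read off $S(L;\ord_E)$ from the restricted volume function $f(t)=\vol_{T|E}(\pi^*L-tE)$, and exploit its concavity (Lemma~\ref{lem:restricted vol})---and your Case~1 (divisorial valuations with one-dimensional center) is essentially the content of Lemma~\ref{lem:S-inv of div on X} with $n=2$, together with $(L\cdot C)\ge\varepsilon\cdot\mult_xC$. However, there are genuine gaps in the rest.

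The main gap is your treatment of valuations $v\in\Val^*_S$ with center $\{x\}$ that are not proportional to $\ord_E$. You propose to rewrite $S(L;v)=S(\W;\bar{\cF}_v)$ via Lemma~\ref{lem:compare S after refinement} and then invoke ``the Okounkov body of $\pi^*L$ with respect to the flag $(E,y)$'' to reduce to $\int_0^\tau f(t)^2\,dt\le (L^2)^2/(3\varepsilon)$. But this doesn't close the gap: an Okounkov body built from $(E,y)$ only sees $\ord_E$ and $\ord_y$ and their toric combinations, not an arbitrary quasi-monomial valuation centered at $y$ on an arbitrary tower of further blowups; and crucially, passing from $A_S(v)$ and $S(L;v)$ to an estimate on the refined series $\W$ over $E$ requires an inversion-of-adjunction step relating $A_S(v)$ to the log discrepancy of the induced valuation on $E$. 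That adjunction step is exactly what the paper outsources to \cite{AZ-K-adjunction}*{Corollary 3.4}: one checks a single numerical condition, $\lambda\le A_S(E)/S(L;E)$ together with $\delta(E,\lambda F(\W);c_1(\M))\ge\lambda$, and these both reduce to the \emph{combined} calculus inequality
\[
3\varepsilon\int_0^\tau(2x-g(x))\,g(x)\,\rd x\le 4\Bigl(\int_0^\tau g(x)\,\rd x\Bigr)^2
\]
of Lemma~\ref{lem:calculus center=pt}. Note that the paper needs this stronger inequality (controlling $S(L;E)+\deg F(\W)$, not just $S(L;E)$); your weaker bound $\int_0^\tau t\,f(t)\,\rd t\le (L^2)^2/(3\varepsilon)$ handles $\ord_E$ itself but doesn't by itself address the other valuations.

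Two further issues. First, you assume $f(\tau)=0$, but the restricted volume $\vol_{T|E}(\pi^*L-tE)$ need not vanish at the pseudo-effective threshold (only the total volume does); the paper's Lemma~\ref{lem:calculus center=pt} accordingly requires only that $g$ be bounded and concave, with no endpoint condition, and your chord-from-below derivation of $\varepsilon\tau\le(L^2)$ would need to be restated accordingly. Second, you invoke Lemma~\ref{lem:unique mult=tau divisor} to get the uniqueness of the curve $C$ in the equality case, but that lemma assumes $\bQ$-factoriality and Picard number one, neither of which is a hypothesis here; the paper instead deduces uniqueness directly from the definition of the Seshadri constant (two such curves would force $\varepsilon\ge\tau$ by intersecting them).
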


The idea to prove the above statement is to consider the refinement $\W$ of the linear series associated to $L$ by the ordinary blowup of $x$ and then compare the stability thresholds of $L$ and $\W$ using tools from \cite{AZ-K-adjunction}. Using Zariski decomposition on surfaces, we will estimate the stability threshold $\delta(\W)$ in terms of restricted volume functions and reduce the inequality in Lemma \ref{lem:surface delta>=Seshadri} to an inequality of the following type.

\begin{lem} \label{lem:calculus center=pt}
Let $0<a\le b$ and let $g(x)$ be a bounded concave function on $[0,b)$ such that $g(x)=x$ for all $x\in [0,a)$. Then
\[
3a\int_0^b (2x-g(x))\cdot g(x) \rd x \le 4\left(\int_0^b g(x)\rd x\right)^2,
\]
and equality holds if and only if $a=b$, or $g(x)=h(x)$ for all $x\in[0,b)$, where
\[
h(x)=
\begin{cases}
    x & \text{if } 0\le x\le a, \\
    \frac{a(b-x)}{b-a} & \text{if } a < x\le b.
\end{cases}
\]
In particular, when equality holds we have $\int_0^b g(x)\rd x =\frac{1}{2}ab$.
\end{lem}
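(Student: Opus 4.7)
The plan is to reduce the inequality to a cleaner quadratic form and then prove it via an integral representation of convex functions. First, I would note that the concavity of $g$ on $[0,b)$ combined with $g(x)=x$ on $[0,a]$ forces $g(x)\le x$ on $[a,b)$: the left-derivative at $a$ equals $1$, which is a supporting slope by concavity, so $g(x)\le a+(x-a)=x$ for $x>a$. Setting $w(x):=x-g(x)$ on $[a,b]$, the function $w$ is convex, nonnegative, with $w(a)=0$. Using the identity $(2x-g)g=x^2-(x-g)^2$ (and the vanishing of $x-g$ on $[0,a]$), a direct computation gives $\int_0^b(2x-g)g\,\rd x=\tfrac{b^3}{3}-J$ and $\int_0^b g\,\rd x=\tfrac{b^2}{2}-I$, where $I:=\int_a^b w\,\rd x$ and $J:=\int_a^b w^2\,\rd x$. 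Since $4(\tfrac{b^2}{2}-I)^2=(b^2-2I)^2$, the target inequality reduces to
\[
3aJ+(b^2-2I)^2\ge ab^3. \qquad(\ast)
\]

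The crux is the one-sided bound
\[
J\ge \frac{4I^2}{3(b-a)}, \qquad(\dagger)
\]
valid for any convex $w\ge 0$ on $[a,b]$ with $w(a)=0$. To prove it, I would use the integral representation $w(x)=\int_{[a,b]}(x-s)_+\,\rd\mu(s)$ for a nonnegative measure $\mu$ on $[a,b]$ (this realizes all such $w$, with $\mu$ coming from the distributional second derivative $w''$). Fubini gives $I=\int f\,\rd\mu$ and $J=\iint K\,\rd\mu\,\rd\mu$, where $f(s):=(b-s)^2/2$ and $K(s,t):=\int_{\max(s,t)}^b(x-s)(x-t)\,\rd x$. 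Thus $(\dagger)$ is equivalent to the pointwise nonnegativity of the kernel $M:=3(b-a)K-4f\otimes f$ on $[a,b]^2$. Using $K(s,t)=(b-t)^2(2b+t-3s)/6$ for $s\le t$, a short algebraic manipulation yields the factorization
\[
M(s,t)=(b-t)^2\bigl[(s-a)(b-s)+\tfrac{1}{2}(b-a)(t-s)\bigr] \quad(s\le t),
\]
which is manifestly nonnegative on $\{a\le s\le t\le b\}$; symmetry handles the other half.

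With $(\dagger)$ in hand, $(\ast)$ follows from the scalar inequality $\tfrac{4aI^2}{b-a}+(b^2-2I)^2\ge ab^3$, a quadratic in $I$ whose unique minimum is attained at $I=b(b-a)/2$ with value exactly $ab^3$. For the equality analysis, both $(\dagger)$ and the scalar bound must be tight. The factorization of $M$ forces $\mu$ to be supported on $\{a,b\}$; since the atom at $b$ does not contribute to $w$ on $[a,b)$, $w$ must be linear there, and $I=b(b-a)/2$ pins down the slope as $b/(b-a)$, yielding $g=h$. The case $a=b$ gives equality trivially, and when $g=h$ the computation $\int_0^b h\,\rd x=\tfrac{a^2}{2}+\tfrac{a(b-a)}{2}=\tfrac{1}{2}ab$ is immediate. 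The main obstacle is discovering the factorization of $M$: the right algebraic grouping of $3(b-a)K(s,t)-4f(s)f(t)$ into a sum of two clearly nonnegative products. Once this is found, the remaining steps — the pointwise check, the Fubini reduction, the scalar minimization, and the equality tracking — are routine.
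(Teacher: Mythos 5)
Your proof is correct, but it is structured quite differently from the paper's. The paper substitutes $f(y)=g(y+a)-h(y+a)$ on $[0,c)$ with $c=b-a$, turning the target into a quadratic functional in the concave function $f$ (with $f(0)=0$) whose nonpositivity reduces to the single estimate $\int_0^c(3x-2c)f(x)\,\rd x\le 0$; the latter is proved by differentiating $F(t)=\int_0^t(3x-2t)f(x)\,\rd x$ in $t$ and using $f(x)\ge\frac{x}{t}f(t)$ from concavity, and equality forces $\int_0^c f^2=0$, hence $f\equiv 0$. You instead pass to $w=x-g$ (convex, nonnegative, $w(a)=0$), rewrite both integrals via $(2x-g)g=x^2-w^2$, and reduce everything to $3aJ+(b^2-2I)^2\ge ab^3$ with $I=\int_a^b w$, $J=\int_a^b w^2$; the core estimate $J\ge\frac{4I^2}{3(b-a)}$ is established through the integral representation $w(x)=\int(x-s)_+\,\rd\mu(s)$ and the pointwise nonnegativity of the kernel $M=3(b-a)K-4f\otimes f$, whose factorization $(b-t)^2\bigl[(s-a)(b-s)+\tfrac12(b-a)(t-s)\bigr]$ for $s\le t$ I have verified. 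I have also checked your scalar minimization (the discriminant vanishes, giving equality precisely at $I=\tfrac{b(b-a)}{2}$) and the equality analysis forcing $\mu=c\,\delta_a$ and hence $g=h$. The trade-off: the paper's route is shorter and avoids measure theory, reducing the whole inequality to one elementary moment inequality for concave functions; yours is more structural, exposing the statement as positivity of a bilinear form on the space of convex $w$, which gives cleaner equality bookkeeping (supported measure) at the cost of invoking the integral representation of convex functions.
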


\begin{proof}
It is straightforward to check that equality holds when $a=b$, thus we may assume that $b>a$. Let $f(x)=g(x+a)-h(x+a)$ and let $c=b-a>0$. Then $f(0)=0$, $f(x)$ is a bounded concave function on $[0,c)$ and the inequality in the statement of the lemma is equivalent to
\[\begin{split}
a^2\int_0^c \left(\frac{6x}{c}-4\right)f(x)\rd x &+ a\int_0^c (6x-4c)f(x)\rd x\\
&-3a\int_0^c f(x)^2 \rd x - 4\left(\int_0^c f(x)\rd x\right)^2 \le 0
\end{split}
\]
by an elementary calculation. We claim that 
\[
\int_0^c (3x-2c)f(x)\rd x \le 0,
\]
which clearly implies the previous inequality as well as the equality condition $f(x)\equiv 0$. To prove the claim, consider $F(t)=\int_0^t (3x-2t)f(x)\rd x$ as a function of $t\in [0,c]$. Then $F(0)=0$ and $F'(t)=t\cdot f(t)-2\int_0^t f(x)\rd x\le t\cdot f(t) - 2 \int_0^t \frac{x}{t}f(t)\rd x = 0$ ($\forall t\in (0,c)$) by the concavity of $f(x)$. Thus $F(c)\le 0$ and we are done.
\end{proof}

To further analyze the equality case in Theorem \ref{thm:delta and Seshadri} and Lemma \ref{lem:surface delta>=Seshadri}, we need two more auxiliary results.

\begin{lem} \label{lem:calculus center=div}
Let $a>0$ and let $g(x)$ be a nonnegative bounded concave function on $[0,a)$ such that $g(0)>0$. Let $n>0$ be an integer. Then
\[
g(0)^{n-1}\int_0^a x\cdot g(x)^{n-1} \rd x \le \frac{n}{n+1}\left(\int_0^a g(x)^{n-1}\rd x\right)^2,
\]
with equality if and only if $n=1$ or  $g(x)=(1-\frac{x}{a})g(0)$.
\end{lem}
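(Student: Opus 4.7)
My plan is to reduce to the linear case via a two-step comparison principle: first replace $g$ by a linear function $L$ with the same total mass, then apply Bernoulli's inequality for $L$. The case $n=1$ is immediate, since both sides equal $a^2/2$. For $n\ge 2$, by homogeneity in $g$ and rescaling of $x$, I normalize to $a=1$ and $g(0)=1$. Concavity together with $g(1)\ge 0$ gives the pointwise bound $g(y)\ge 1-y$, so $M:=\int_0^1 g(y)^{n-1}\,\rd y \ge 1/n$. Since $c\mapsto \int_0^1(1+cy)^{n-1}\,\rd y$ is continuous and strictly increasing on $[-1,\infty)$ with range $[1/n,\infty)$, I can choose a unique $c\ge -1$ such that the affine function $L(y):=1+cy$ satisfies $\int_0^1 L^{n-1}\,\rd y = M$.

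The key step is the moment comparison
\[
\int_0^1 y\, g^{n-1}\,\rd y \;\le\; \int_0^1 y\, L^{n-1}\,\rd y,
\]
with equality iff $g=L$. Indeed, $f:=g-L$ is concave with $f(0)=0$, hence has at most one further zero in $(0,1]$; and since $g,L\ge 0$, the function $h:=g^{n-1}-L^{n-1}$ has the same sign as $f$ pointwise. If $f\le 0$ on $[0,1]$, then $h\le 0$ and the mass-matching constraint $\int_0^1 h\,\rd y=0$ forces $g=L$. Otherwise there exists $y_0\in(0,1)$ with $h>0$ on $(0,y_0)$ and $h\le 0$ on $(y_0,1)$. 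Then $(y-y_0)\,h(y)\le 0$ pointwise, and therefore
\[
\int_0^1 y\, h\,\rd y \;=\; \int_0^1 (y-y_0)\, h\,\rd y + y_0\int_0^1 h\,\rd y \;\le\; 0,
\]
with equality forcing $h\equiv 0$, i.e.\ $g=L$.

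It remains to verify the inequality for the linear function. For $c=0$ it reads $1/2\le n/(n+1)$, strict for $n\ge 2$. For $c\ne 0$, substituting $u=1+cy$ reduces it, after elementary algebra, to $b^n[nb-(n-1)]\le b^{2n}$ where $b:=L(1)=1+c\ge 0$; for $b>0$ this is Bernoulli's inequality $b^n\ge 1+n(b-1)$, strict for $n\ge 2$ unless $b=1$ (excluded since $c\ne 0$), and for $b=0$ both sides vanish. Hence in the linear case equality holds iff $b=0$, i.e.\ $L(y)=1-y$. Combining the two comparisons, the original inequality becomes an equality precisely when $g=L$ and $L=1-y$, which unscales to $g(x)=(1-x/a)g(0)$. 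The creative step, and my anticipated main obstacle, is the moment comparison: noticing that the concavity of $g-L$ combined with the monotonicity of $t\mapsto t^{n-1}$ on $[0,\infty)$ pins down the sign pattern of $h$, enabling a Chebyshev-style argument; the choice of affine $L$ with matched total mass is what makes the whole reduction possible.
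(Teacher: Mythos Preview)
Your proof is correct and follows essentially the same strategy as the paper: replace $g$ by a mass-matched affine function and then run a Chebyshev-type moment comparison using the single sign change of $g^{n-1}-L^{n-1}$. The one tactical difference is that the paper chooses the affine comparison $f_b(x)=1-x/b$ on an \emph{extended} interval $[0,b]$ (with $g$ set to $0$ on $(a,b]$), for which the target inequality is an exact identity, so no final verification is needed; you instead keep the interval fixed and vary the slope, which forces the extra Bernoulli step for the linear case. Both choices lead to the same equality characterization.
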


\begin{proof}
The result is clear when $n=1$, so we may assume that $n\ge 2$. Up to rescaling, we may also assume that $g(0)=1$. For each $b>0$, let $f_b(x)=1-\frac{x}{b}$ $(0\le x\le b)$. Since $g(x)$ is nonnegative and concave, we have $g(x)\ge f_a(x)$ for all $x\in [0,a]$ and thus $\int_0^a g(x)^{n-1}\rd x\ge \int_0^a f_a(x)^{n-1}\rd x$. As $\lim_{b\to \infty} \int_0^b f_b(x)^{n-1}\rd x = \infty$, by interpolation we know that there exists some $b\ge a$ such that
\begin{equation} \label{eq:conification}
    \int_0^a g(x)^{n-1}\rd x = \int_0^b f_b(x)^{n-1}\rd x.
\end{equation}
It is easy to check that $\int_0^b x\cdot f_b(x)^{n-1} \rd x = \frac{n}{n+1}\left(\int_0^b f_b(x)^{n-1}\rd x\right)^2$, hence it suffices to show
\begin{equation} \label{eq:barycenter shift}
    \int_0^a x\cdot g(x)^{n-1} \rd x\le \int_0^b x\cdot f_b(x)^{n-1} \rd x.
\end{equation}
For ease of notation, set $g(x)=0$ when $a<x\le b$ and set $h(x)=f_b(x)^{n-1}-g(x)^{n-1}$. Since $g(x)$ is concave on $[0,a]$ and $f_b(x)$ is linear, there exists some $c\le a$ such that $h(x)\le 0$ for all $x\in [0,c]$ and $h(x)>0$ for all $x\in (c,b)$. Note that $c>0$ by \eqref{eq:conification}. We then have
\[
\int_0^b x h(x) \rd x = \int_0^c x h(x) \rd x + \int_c^b x h(x) \rd x \ge c \int_0^c h(x) \rd x + c \int_c^b h(x) \rd x = 0,
\]
where the last equality follows from \eqref{eq:conification}. This proves \eqref{eq:barycenter shift}. When equality holds, we have $h(x)=0$, thus $b=a$ and $g(x)=1-\frac{x}{a}$.
\end{proof}

\begin{lem} \label{lem:S-inv of div on X}
Let $L$ be an ample line bundle on a variety $X$ of dimension $n$. Let $G\subseteq X$ be a prime divisor on $X$. Then
\[
S(L;G)\le \frac{(L^n)}{(n+1)(L^{n-1}\cdot G)},
\]
with equality if and only if $L\equiv aG$ for some $a>0$.
\end{lem}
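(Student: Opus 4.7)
The plan is to express $S(L;G)$ as a weighted integral of restricted volumes, exploit the concavity of these restricted volumes, and then apply the analytic inequality in Lemma~\ref{lem:calculus center=div}.

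First I would apply Lemma~\ref{lem:formula for S-inv thru restricted vol} with $\pi$ the identity on $X$ and $E=G$, which immediately gives
\[
S(L;G)=\frac{n}{(L^n)}\int_0^{T}x\cdot\vol_{X|G}(L-xG)\,\rd x,\qquad T:=T(L;G).
\]
Setting $g(x):=\vol_{X|G}(L-xG)^{1/(n-1)}$, Lemma~\ref{lem:restricted vol} tells me that $g$ is concave and bounded on $[0,T)$. Since $L$ is ample, the restriction $H^0(X,mL)\to H^0(G,mL|_G)$ is surjective for $m\gg 0$, so $g(0)^{n-1}=\vol_{X|G}(L)=(L^{n-1}\cdot G)$. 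Integrating the identity $\tfrac{\rd}{\rd t}\vol(L-tG)=-n\vol_{X|G}(L-tG)$ from $0$ to $T$ and using $\vol(L-tG)\to 0$ as $t\to T^-$ yields the key normalization
\[
\int_0^T\vol_{X|G}(L-xG)\,\rd x=\frac{(L^n)}{n}.
\]

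I would then feed $g$ into Lemma~\ref{lem:calculus center=div} with $a=T$, obtaining
\[
(L^{n-1}\cdot G)\int_0^T x\cdot\vol_{X|G}(L-xG)\,\rd x\le\frac{n}{n+1}\left(\frac{(L^n)}{n}\right)^2=\frac{(L^n)^2}{n(n+1)}.
\]
Plugging this back into the formula for $S(L;G)$ produces $S(L;G)\le(L^n)/((n+1)(L^{n-1}\cdot G))$ at once.

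For the equality discussion, equality in the main bound forces equality in Lemma~\ref{lem:calculus center=div}; thus for $n\ge 2$ we have $\vol_{X|G}(L-xG)=(1-x/T)^{n-1}(L^{n-1}\cdot G)$ on $[0,T)$ and, by integrating once more, $\vol(L-xG)=(L^n)(1-x/T)^n$. For $x>0$ small enough that $L-xG$ remains ample, both sides are polynomials in $x$; comparing coefficients yields $(L^{n-k}\cdot G^k)=(L^n)/T^k$ for all $k=0,\ldots,n$. Setting $D:=L-TG$, these identities give $(L^{n-1}\cdot D)=0$ and $(L^{n-2}\cdot D^2)=0$, so by the Hodge index theorem $D\equiv 0$, i.e.\ $L\equiv TG$ with $a=T>0$. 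The converse (if $L\equiv aG$ with $a>0$, then $T=a$ and the required numerical identities hold, giving equality) is a direct computation. The main technical subtlety I expect is the use of the equality case of the Hodge index theorem on a potentially singular normal projective variety, which I plan to resolve by pulling everything back to a resolution of $X$ and, if necessary, perturbing $\mu^*L$ into the ample cone before taking a limit.
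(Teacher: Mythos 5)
Your proposal follows the same skeleton as the paper's proof (Lemma~\ref{lem:formula for S-inv thru restricted vol}, concavity of the restricted volume function, and Lemma~\ref{lem:calculus center=div}), but differs in two ways worth flagging.

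First, a technical gap: you apply Lemma~\ref{lem:formula for S-inv thru restricted vol} with $\pi=\mathrm{id}_X$ and $E=G$, so that you work with classes $L-xG$ and restricted volumes $\vol_{X|G}(L-xG)$ directly on $X$. Under the paper's standing conventions $X$ is merely normal, so a prime divisor $G\subseteq X$ need not be $\bQ$-Cartier, and $L-xG$ may not be a well-defined $\bR$-Cartier class on $X$; moreover the differentiability (BFJ/LM) and the concavity of $t\mapsto\vol_{Y|E}^{1/(n-1)}$ (ELMNP) are invoked on smooth $Y$. This is precisely why the paper first passes to a log resolution $\pi\colon Y\to X$ with $\tG=\pi_*^{-1}G$ smooth and works with $\pi^*L-x\tG$ on $Y$. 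You do mention pulling back to a resolution, but only at the very end for the Hodge index step; that maneuver is needed from the start of the argument, not just at the end.

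Second, your equality analysis is genuinely different from and somewhat heavier than the paper's. You integrate twice, expand $\vol(L-xG)$ as a polynomial for small $x$, extract both $(L^{n-1}\cdot D)=0$ and $(L^{n-2}\cdot D^2)=0$ for $D=L-TG$, and appeal to (the equality case of) the Hodge index theorem. The paper stops after the first integration: from $\vol_{Y|\tG}(\pi^*L-x\tG)=(1-x/a)^{n-1}(L^{n-1}\cdot G)$ one gets $(L^{n-1}\cdot(L-aG))=0$; since $a=T(L;G)$ makes $L-aG$ pseudo-effective and $L$ is ample (so $L^{n-1}$ lies in the interior of the movable cone, which is dual to the pseudo-effective cone), this already forces $L-aG\equiv 0$. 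Your route is correct once carried out on a resolution, but it needs one more coefficient and the equality case of Hodge index, whereas the paper only needs strict positivity of $(L^{n-1}\cdot{-})$ on nonzero pseudo-effective classes. Either way you should make the $n=1$ base case explicit, as Lemma~\ref{lem:calculus center=div} treats $n=1$ separately (the paper dispatches it up front).
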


\begin{proof}
The result is clear when $n=1$, so we assume that $n\ge 2$. Let $\pi\colon Y\to X$ be a log resolution such that the strict transform $\tG=\pi^{-1}_*G$ of $G$ is smooth. Let $a=T(L;G)$. By Lemmas \ref{lem:restricted vol} and \ref{lem:formula for S-inv thru restricted vol}, we have 
\[
S(L;G) =\frac{n}{(L^n)}\int_0^a x\cdot \vol_{Y|\tG}(\pi^*L-x\tG) \rd x,
\]
\begin{equation} \label{eq:L^n as integral}
    (L^n) = n \int_0^a \vol_{Y|\tG}(\pi^*L-x\tG) \rd x.
\end{equation}
Since $L$ is ample, by \cite{ELMNP}*{Lemma 2.4} we also have $\vol_{Y|\tG}(\pi^*L)=\vol_{X|G}(L)=(L^{n-1}\cdot G)$, hence the inequality follows directly from Lemma \ref{lem:calculus center=div} applied to $g(x)=\vol_{Y|\tG}(\pi^*L-x\tG)^{\frac{1}{n-1}}$, which is concave by \cite{ELMNP}*{Theorem A}. Suppose that equality holds, then by Lemma \ref{lem:calculus center=div} we have  $g(x)=(1-\frac{x}{a})g(0)$, i.e.
\[
\vol_{Y|\tG}(\pi^*L-x\tG) = \left( 1-\frac{x}{a} \right)^{n-1} (L^{n-1}\cdot G)
\]
for all $0\le x<a$. A direct calculation through \eqref{eq:L^n as integral} then yields $(L^n)=a(L^{n-1}\cdot G)$, or $(L^{n-1}\cdot L-aG)=0$. It follows that $L-aG\equiv 0$ as $L-aG$ is pseudo-effective and $L$ is ample. Clearly $S(L;G)=\frac{a}{n+1}$ if $L\equiv aG$. This finishes the proof.
\end{proof}

We are ready to present the proof of Lemma \ref{lem:surface delta>=Seshadri} and Theorem \ref{thm:delta and Seshadri}.

\begin{proof}[Proof of Lemma \ref{lem:surface delta>=Seshadri}]
Let $\pi\colon T\to S$ be the ordinary blowup at $x$ with exceptional divisor $E\cong \bP^1$. Let $\V$ be the complete linear series associated to $L$ and let $\W$ be its refinement by $E$. Note that $\delta_x(L)=\delta_x(\V)$. Let $\lambda = \frac{3}{(L^2)}\cdot \varepsilon_x(L)$, let $\varepsilon=\varepsilon_x(L)$ and let $\tau=\tau_x(L)=T(L;E)$. Since $\W$ is almost complete by \cite{AZ-K-adjunction}*{Lemma 4.10}, applying \cite{AZ-K-adjunction}*{Corollary 3.4} we know that $\delta_x(\V)\ge \lambda$ as long as
\begin{equation} \label{eq:lambda<=A/S}
    \lambda\le \frac{A_S(E)}{S(L;E)}
\end{equation}
and $\delta(E,\lambda\cdot F(\W);c_1(\M))\ge \lambda$ holds, where $\M$ is the movable part of $\W$. By the definition of stability thresholds, the latter inequality is equivalent to saying
\begin{equation} \label{eq:delta>=lambda}
    \lambda\cdot S(c_1(\M);P) + \lambda \cdot \mult_P F(\W) \le 1
\end{equation}
for all closed point $P\in E$. Let us verify that both conditions \eqref{eq:lambda<=A/S}, \eqref{eq:delta>=lambda} holds in our situation. First, we have
\[
F(\W)=\frac{2}{(L^2)}\int_0^\tau \left( \vol_{T|E}(\pi^*L-xE)\cdot N_\sigma (\pi^*L-xE)|_E \right) \rd x
\]
by \cite{AZ-K-adjunction}*{Lemma 4.13}, and
\[
\vol_{T|E}(\pi^*L-xE) = \left(P_\sigma (\pi^*L-xE)\cdot E\right)
\]
by \cite{ELMNP}*{Corollary 2.17 and Example 2.19}, where $P_\sigma (\cdot)$ (resp.\ $N_\sigma (\cdot)$) denotes the nef (resp.\ negative) part in the Zariski decomposition of a $($pseudo-effective$)$ divisor. In particular, letting $g(x)=\vol_{T|E}(\pi^*L-xE)$ ($0\le x<\tau$), we have 
\[\left(N_\sigma (\pi^*L-xE)\cdot E\right)=((\pi^*L-xE)\cdot E)-\left(P_\sigma (\pi^*L-xE)\cdot E\right) = x-g(x).\] By the definition of Seshadri constant, we also have $g(x)=x$ for all $0\le x\le \varepsilon$. Therefore as $g(x)$ is concave by \cite{ELMNP}*{Theorem A}, Lemma \ref{lem:formula for S-inv thru restricted vol} and Lemma \ref{lem:calculus center=pt} yield
\begin{align*}
    S(L;E)+\deg F(\W) & = \frac{2}{(L^2)}\int_0^\tau x\cdot g(x)\ \rd x + \frac{2}{(L^2)}\int_0^\tau (x-g(x))\cdot g(x) \rd x \\
    & \le \frac{2}{(L^2)}\cdot \frac{4\left(\int_0^\tau g(x)\rd x\right)^2}{3\varepsilon} = \frac{2(L^2)}{3\varepsilon} = \frac{2}{\lambda}. \\
\end{align*}
It follows that 
\begin{equation} \label{eq:2/S+F}
    \lambda\le \frac{2}{S(L;E)+\deg F(\W)}\le \frac{2}{S(L;E)} = \frac{A_S(E)}{S(L;E)}
\end{equation}
which verifies \eqref{eq:lambda<=A/S}. Since $E\cong \bP^1$ is a curve, we have $S(c_1(\M);P)=\frac{1}{2}\deg c_1(\M)$ for any closed point $P\in E$. By \cite{AZ-K-adjunction}*{(3.1)}, we also have $\deg \left(c_1(\M)+F(\W)\right)=\deg c_1(\W)=S(L;E)$. Thus we obtain 
\begin{align*}
    \lambda\cdot S(c_1(\M);P) + \lambda \cdot \mult_P F(\W) & \le \lambda \cdot \deg \left(\frac{1}{2} c_1(\M) + F(\W)\right) \\
    & = \frac{\lambda}{2} \cdot (S(L;E)+\deg F(\W)) \le 1
\end{align*}
for any closed point $P\in E$, which verifies \eqref{eq:delta>=lambda}. Hence according to the discussions at the beginning of the proof, \cite{AZ-K-adjunction}*{Corollary 3.4} implies that $\delta_x(L)=\delta_x(\V)\ge \lambda$ as desired.

It remains to prove the equality conditions. It is straightforward to check that $\frac{A_S(E)}{S(L;E)}=\lambda$ (resp.\ $\frac{A_S(C)}{S(L;C)}=\lambda$) when $\varepsilon=\tau=\sqrt{(L^2)}$ (resp.\ $\varepsilon\tau=(L^2)$ and there exists some curve $C\subseteq X$ containing $x$ such that $L\equiv\tau C$), hence $\delta_x(L) = \lambda$ in either case. Conversely, assume that $\delta_x(L)=\lambda$. If $\delta_x(L)$ is computed by $E$, then by \eqref{eq:2/S+F} we have $F(\W)=0$, hence $N_\sigma(\pi^*L-xE)=0$ and $\pi^*L-xE$ is nef for all $0\le x<\tau$. It follows that $\varepsilon=\tau$. Since $\varepsilon\le \sqrt{(L^2)}\le \tau$, we must have $\varepsilon=\tau=\sqrt{(L^2)}$ as desired. On the other hand, if $E$ does not compute $\delta_x(L)$, then $\epsilon<\tau$ and by the equality description in Lemma \ref{lem:calculus center=pt} we have $(L^2)=2\int_0^\tau g(x)\rd x=\varepsilon\tau$. By \cite{BJ-delta}*{Theorem E} and \cite{AZ-K-adjunction}*{Corollary 3.4}, we also see that $\delta_x(L)$ is computed by some valuations $v$ such that $C_S(v)\neq \{x\}$. Thus the center of $v$ is a curve $C\subseteq S$; in particular, $v$ is divisorial and the curve $C$ also computes $\delta_x(L)$, i.e.\ $S(L;C)=\frac{1}{\lambda}$. By Lemma \ref{lem:S-inv of div on X} and the definition of Seshadri constant, we deduce
\[
\varepsilon\le \frac{(L\cdot C)}{\mult_x C}\le (L\cdot C)\le \frac{(L^2)}{3\cdot S(L;C)}=\frac{\lambda\cdot (L^2)}{3}=\varepsilon,
\]
thus equality holds everywhere. In particular, $L\equiv aC$ and $S(L;C)=\frac{a}{3}$ for some $a>0$ by Lemma \ref{lem:S-inv of div on X}. But since $S(L;C)=\frac{1}{\lambda}=\frac{(L^2)}{3\varepsilon}=\frac{\varepsilon\tau}{3\varepsilon}=\frac{\tau}{3}$, we must have $a=\tau$. Since there can be at most one irreducible curve $C\subseteq S$ containing $x$ such that $L\equiv \tau C$ (otherwise it follows from the definition of Seshadri constants that $\epsilon\ge\tau$), this concludes the proof of the equality cases.
\end{proof}

\begin{cor} \label{cor:surface delta Pic=Z}
Let $S$ be a smooth surface of Picard number one and let $L$ be an ample line bundle on $S$. Let $x\in S$ be a closed point. Then $\delta_x(L)\ge \frac{3}{\tau_x(L)}$ and equality holds if and only if $\varepsilon_x(L)=\tau_x(L)$ or $L\sim_\bQ \tau_x(L)C$ for some irreducible curve $C\subseteq S$ passing through $x$.
\end{cor}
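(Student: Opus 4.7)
The plan is to derive this corollary directly by combining the surface inequality of Lemma \ref{lem:surface delta>=Seshadri} with the product formula of Lemma \ref{lem:Seshadri*pseudoeff}. Since $S$ is a smooth surface of Picard number one, it is in particular $\bQ$-factorial of Picard number one, so Lemma \ref{lem:Seshadri*pseudoeff} applies and gives $\varepsilon_x(L)\cdot \tau_x(L)=(L^2)$. Substituting this identity into the bound
\[
\delta_x(L)\ge \frac{3}{(L^2)}\cdot \varepsilon_x(L)
\]
from Lemma \ref{lem:surface delta>=Seshadri} immediately yields $\delta_x(L)\ge \frac{3}{\tau_x(L)}$.

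For the equality characterization, I would simply translate the two equality cases of Lemma \ref{lem:surface delta>=Seshadri} using the identity $\varepsilon_x(L)\tau_x(L)=(L^2)$. The first case of Lemma \ref{lem:surface delta>=Seshadri} reads $\varepsilon_x(L)=\tau_x(L)=\sqrt{(L^2)}$; under the product identity this is equivalent to merely $\varepsilon_x(L)=\tau_x(L)$ (the common value is then automatically $\sqrt{(L^2)}$). The second case of the lemma states that $\varepsilon_x(L)\tau_x(L)=(L^2)$, which is free here, together with the existence of an irreducible curve $C\subseteq S$ containing $x$ such that $L\equiv \tau_x(L)\cdot C$; since $S$ has Picard number one, numerical equivalence is equivalent to $\bQ$-linear equivalence on $S$, so this condition is the same as $L\sim_\bQ \tau_x(L)\cdot C$.

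Combining the two directions gives the stated equivalence, and there is no real obstacle beyond bookkeeping: the entire content is already contained in Lemmas \ref{lem:surface delta>=Seshadri} and \ref{lem:Seshadri*pseudoeff}, and the only thing to verify is that the Picard-one hypothesis reconciles numerical equivalence with $\bQ$-linear equivalence and collapses the square-root condition in case (1) of Lemma \ref{lem:surface delta>=Seshadri} into the single equality $\varepsilon_x(L)=\tau_x(L)$.
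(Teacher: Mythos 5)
Your approach is exactly the paper's: the corollary is read off by feeding the identity $\varepsilon_x(L)\tau_x(L)=(L^2)$ of Lemma~\ref{lem:Seshadri*pseudoeff} into the inequality and equality description of Lemma~\ref{lem:surface delta>=Seshadri}, and the collapsing of case~(1) of that lemma to $\varepsilon_x(L)=\tau_x(L)$ is the right bookkeeping.

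One justification in your write-up is however incorrect as a general statement. On a smooth projective surface of Picard number one it is \emph{not} true that numerical equivalence coincides with $\bQ$-linear equivalence: the two agree precisely when $\Pic^0(S)$ is trivial, i.e.\ when the irregularity $q(S)=h^1(S,\cO_S)$ vanishes. If $q(S)>0$ (e.g.\ an abelian surface with $\rho=1$), then $\Pic^0(S)$ is a positive-dimensional abelian variety and contains many non-torsion numerically trivial line bundles, so $D_1\equiv D_2$ does not force $D_1\sim_\bQ D_2$. Since what Lemma~\ref{lem:surface delta>=Seshadri} (via Lemma~\ref{lem:S-inv of div on X}) actually produces in its second equality case is only the numerical relation $L\equiv\tau_x(L)C$, the ``only if'' direction of the corollary does not follow from $\rho(S)=1$ alone. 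The ``if'' direction is unaffected, as $\sim_\bQ$ always implies $\equiv$. To be fair, the paper's one-sentence proof leaves this point implicit as well, and in every application made of the corollary (del Pezzo surfaces, K3 surfaces, ample complete-intersection surfaces inside Fano manifolds) one has $q(S)=0$ by the Lefschetz hyperplane theorem, so the distinction is harmless; but you should either invoke $q(S)=0$ explicitly or state the conclusion with $\equiv$ rather than asserting that $\rho(S)=1$ by itself identifies the two equivalence relations.
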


\begin{proof}
This is immediate from Lemma \ref{lem:surface delta>=Seshadri} and Lemma \ref{lem:Seshadri*pseudoeff}.
\end{proof}

\begin{rem}
The corollary is false without the Picard number one assumption. Consider for example $S=\bP^1\times \bP^1$ and take $L$ to be the line bundle of bi-degree $(a,b)$ with $0<a\le b$. Then by \cite{Z-product}*{Theorem 1.2} we know that $\delta(L)=\frac{2}{b}$. On the other hand it is not hard to see that $\tau_x(L)=a+b$ for all $x\in S$ and therefore $\delta_x(L)\ge \frac{3}{\tau_x(L)}$ only when $b\le 2a$.
\end{rem}

\begin{proof}[Proof of Theorem  \ref{thm:delta and Seshadri}]
We prove by induction on the dimension $n$. When $n=2$, the only part that is not covered by Lemma \ref{lem:surface delta>=Seshadri} is the assertion that $\delta_x(L)$ is computed by any $H\in|L|$ when $\varepsilon_x(L)=1=(L^2)$ and $\delta_x(L)=3$. However, this follows immediately from the fact that $\frac{A_X(H)}{S(L;H)}=3$ for any $H\in |L|$ (by Lemma \ref{lem:S-inv of div on X}). When $n\ge 3$, we have 
\[
\delta_x(L|_{H_1})\ge \frac{n}{(L^{n-1}\cdot H_1)} \varepsilon_x(L|_S) = \frac{n}{(L^n)} \varepsilon_x(L|_S)
\]
by induction hypothesis. By \cite{AZ-K-adjunction}*{Lemma 4.6}, we then have
\[
\delta_x(L)\ge \min\left\{ n+1, \frac{n+1}{n}\delta_x(L|_{H_1}) \right\} \ge \min\left\{n+1, \frac{n+1}{(L^n)}\cdot \varepsilon_x(L|_S) \right\}.
\]
Since $\varepsilon_x(L|_S)\le \sqrt{(L|_S^2)}\le (L|_S^2)=(L^n)$, we obtain
\[
\delta_x(L)\ge \frac{n+1}{(L^n)}\cdot \varepsilon_x(L|_S). 
\]
Suppose that equality holds. Let $v$ be any valuation that computes $\delta_x(L)$. By \cite{AZ-K-adjunction}*{Lemma 4.6} and the above discussion, we have either $\delta_x(L)=n+1$ and $\varepsilon_x(L|_S)=(L^n)=1$, or $\delta_x(L|_{H_1})=\frac{n}{(L^n)}\cdot \varepsilon_x(L|_S)$. In the former case, we also have $\tau_x(L|_S)=\varepsilon_x(L|_S)=1$ since $\varepsilon_x(L|_S) = \sqrt{(L|_S^2)}$. By the same argument as in the $n=2$ case, we also know that in this case $\delta_x(L)$ is computed by any $H\in |L|$. In the latter case, by \cite{AZ-K-adjunction}*{Lemma 4.6} we also know that $C_X(v)\not\subseteq H_1$ and that for every irreducible component $Z$ of $C_X(v)\cap H_1$ containing $x$, there exists a valuation $v_1$ on $H_1$ with center $Z$ that computes $\delta_x(L|_{H_1})$. By induction hypothesis, either $\varepsilon_x(L|_S)=\tau_x(L|_S)=\sqrt{(L^n)}>1$ and $\dim C_{H_1}(v_1)\ge n-3$, in which case $\dim C_X(v)=\dim C_{H_1}(v)+1\ge n-2$; or $\varepsilon_x(L|_S)\tau_x(L|_S)=(L^n)$ and the center of $v_1$ on $H_1$ is a prime divisor that does not contain $S$. Suppose that we are in the last case. Then $G=C_X(v)$ is also a prime divisor that does not contain $S$. Since $v$ computes $\delta_x(L)$, we have $\frac{1}{S(L;G)}=\delta_x(L)=\frac{n+1}{(L^n)}\cdot \varepsilon_x(L|_S)$. As in the proof of Lemma \ref{lem:surface delta>=Seshadri}, we then obtain
\[\begin{split}
\varepsilon_x(L|_S)\le \varepsilon_x(L|_S)\cdot \mult_x(G|_S)&\le (L|_S\cdot G|_S)= (L^{n-1}\cdot G)\\
&\le \frac{(L^n)}{(n+1)S(L;G)} = \varepsilon_x(L|_S).
\end{split}
\]
Hence equality holds everywhere and $L\equiv \tau_x(L|_S) G$ by Lemma \ref{lem:S-inv of div on X} as in the proof of Lemma \ref{lem:surface delta>=Seshadri}. This completes the proof.
\end{proof}

\section{Hypersurfaces}\label{sec:hyp}

As a first application of Theorem \ref{thm:delta and Seshadri}, in this section we prove the uniform K-stability of the following hypersurfaces.

\begin{thm} \label{thm:hypersurface}
Let $X\subseteq \bP^{n+1}$ be a smooth Fano hypersurface of Fano index $r\ge 3$ and dimension $n\ge r^3$. Then $X$ is uniformly K-stable.
\end{thm}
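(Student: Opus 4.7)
The plan is to apply Theorem \ref{thm:delta and Seshadri} with $L = H := \cO_X(1)$, reducing the problem to a Seshadri constant estimate on suitable surfaces. Since $-K_X = rH$ and $(H^n) = d := n+2-r$, uniform K-stability is equivalent to $\delta_x(H) > r$ for every $x \in X$. For any $3$-plane $\Pi \subseteq \bP^{n+1}$ through $x$ such that $S := \Pi \cap X$ is integral and smooth at $x$ (equivalently, the complete intersection of $n-2$ general hyperplane sections through $x$), Theorem \ref{thm:delta and Seshadri} gives $\delta_x(H) \geq \tfrac{n+1}{d}\cdot \varepsilon_x(H|_S)$, so it suffices to produce, for each $x$, such a $\Pi$ with
\[
\varepsilon_x(H|_S) \;>\; \frac{rd}{n+1} \;=\; r - \frac{r(r-1)}{n+1}.
\]

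To obtain the Seshadri bound, I would show that for a sufficiently general $\Pi \ni x$, the smooth degree-$d$ surface $S \subseteq \Pi \cong \bP^3$ contains no irreducible curve $C \ni x$ of ratio $(H \cdot C)/\mult_x C \leq rd/(n+1)$. Any such curve lies in $X \cap \Pi$, so its projective linear span $\langle C \rangle$ has dimension $k \leq 3$; stratify by $k \in \{1,2,3\}$. The parameter space of $3$-planes through $x$ has dimension $3(n-2)$, and the locus of $3$-planes containing a fixed $k$-plane through $x$ has codimension $k(n-2)$. So if the family of bad curves on $X$ through $x$ whose span has dimension exactly $k$ has dimension $\ell_k < k(n-2)$, then a generic $\Pi$ avoids them all.

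For $k=1$, bad curves are lines, and the family of lines through $x$ on the smooth Fano hypersurface $X$ has dimension at most $n - d = r - 2$, well below $n - 2$. For $k=2$, a bad planar curve $C$ has degree $e$ and multiplicity $m$ at $x$ with $e \leq rm$, and irreducible plane curves satisfy $m \leq e - 1$, forcing $e = O(r^2)$. A parameter count over pairs $(P, C)$ with $P$ a plane through $x$ and $C$ a component of $P \cap X$ of degree $e$ through $x$, incorporating the codimension imposed by $F|_C \equiv 0$ (where $F$ defines $X$), bounds $\ell_2$ polynomially in $r$; the hypothesis $n \geq r^3$ ensures $\ell_2 < 2(n-2)$. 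The case $k=3$ is analogous: non-planar curves in a $3$-plane have degree $O(r^2)$ and their families span small loci. With the strict Seshadri inequality in hand, one concludes $\delta_x(H) > r$ directly from Theorem \ref{thm:delta and Seshadri}, with no equality-case analysis needed.

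Main obstacle: the parameter count in case $k = 2$ must accommodate arbitrary smooth $X$, not just generic ones, and the family of planes $P \ni x$ whose section $P \cap X$ has a low-degree irreducible component through $x$ may a priori be large. The cubic bound $n \geq r^3$ provides precisely the polynomial slack needed for the count to close, uniformly over $x \in X$ and over the family of smooth Fano hypersurfaces of index $r$.
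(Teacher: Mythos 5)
Your plan rests on producing, for \emph{every} closed point $x\in X$, a $3$-plane $\Pi\ni x$ with $\varepsilon_x(H|_{\Pi\cap X})>rd/(n+1)$, and then invoking Theorem~\ref{thm:delta and Seshadri}. That last step is fine, but there are several genuine gaps in the way the Seshadri bound is obtained.

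First, the dimension estimates driving the parameter count are not justified and in places not true. For $k=1$ you assert that lines through $x$ on the smooth hypersurface form a family of dimension at most $n-d=r-2$; this is the \emph{expected} dimension, and it is known to fail at special points of special smooth hypersurfaces (this is exactly the phenomenon of generalized Eckardt points, cf.\ the dimension jump for lines through Eckardt points of the Fermat cubic). Without an a priori bound strictly below $n-2$, the incidence count for $k=1$ does not close. For $k=2$ the inequality chain ``$e\le rm$ and $m\le e-1$ force $e=O(r^2)$'' is simply incorrect: the two constraints combined give $e\ge r/(r-1)$, a \emph{lower} bound, and a bad irreducible plane component of $P\cap X$ can a priori have degree as large as $d\approx n$ (with multiplicity of order $n/r$, say), which is not $O(r^2)$. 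Nothing in the proposal controls the dimension of the family of such planes.

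Second, and more structurally, the requirement of a good $\Pi$ at \emph{every} $x$ is precisely the thing the paper is careful to avoid. If some closed point $x_0$ supports a divisor $D\sim_\bQ L$ with $\mult_{x_0}D$ large, then for every $3$-plane $\Pi\ni x_0$ some irreducible component of $D|_{\Pi\cap X}$ is a bad curve, and the parameter count cannot help. The paper sidesteps this by first reducing (via \cite{AZ-K-adjunction}*{Lemmas 4.23 and 4.25}) to showing $\delta_Z(X)\ge\frac{n+1}{n}$ for every subvariety $Z$ of dimension $\ge 1$, and then establishing the key bound $\tau_x(L)\le\sqrt{d}+1$ only for a \emph{very general} $x\in Z$ (using a Hilbert-scheme/\cite{EKL-Seshadri} argument together with Lemma~\ref{lem:unique mult=tau divisor}). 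This reduction is essential; without it one would need to rule out high-multiplicity divisors at all points, which is not available.

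Finally, the paper's mechanism for turning the $\tau_x$ bound into an $\varepsilon_x$ bound is also quite different: rather than avoiding bad curves by a generic choice of $3$-plane, it propagates the $\tau_x$ bound down to a $3$-fold linear section $Y$ and then to a \emph{Picard-number-one} surface $S\subset Y$ (guaranteed by the Noether--Lefschetz result \cite{DGF-Noether-Lefschetz}), where the identity $\varepsilon_x(L)\cdot\tau_x(L)=(L^2)$ of Lemma~\ref{lem:Seshadri*pseudoeff} converts the bound instantly, via Corollary~\ref{cor:surface delta Pic=Z}. Propagating $\tau_x$ from $X$ to $Y$ to $S$ uses Lemmas~\ref{lem:high mult div when dim>=4} and~\ref{lem:high mult div when dim=3}, which in turn require the Picard-number-one hypotheses. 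So while your overall use of Theorem~\ref{thm:delta and Seshadri} is the right entry point, the proposal is missing the two ideas that actually make the argument work: the reduction to positive-dimensional $Z$ (so a very-general-point $\tau$-bound suffices) and the passage to a $\rho=1$ surface where the $\varepsilon$--$\tau$ duality replaces any parameter count.
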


The main difficulty we need to overcome in order to apply Theorem \ref{thm:delta and Seshadri} in this situation is that the Seshadri constants on complete intersection surfaces are not always large enough. For example, for any effective $\bQ$-divisor $D\sim_\bQ L$ (where $L$ is the hyperplane class) and any general complete intersection surface $S$ passing through some fixed $x$ we have $\varepsilon_x(L_S)\le \frac{(L_S^2)}{\mult_x (D\cap S)}=\frac{(L^n)}{\mult_x D}$. If there exists some $D$ such that $\mult_x D$ is relatively large (more precisely, if $\tau_x(L) > \frac{n+1}{r}$), then we will not be able to derive $\delta_x(X)\ge 1$ directly through Theorem \ref{thm:delta and Seshadri}. Thus we need to analyze these ``bad'' loci. This is done in the next two lemmas. In particular, it turns out that the ``bad'' locus corresponds exactly to points that support divisors of high multiplicities. 

\begin{lem} \label{lem:high mult div when dim>=4}
Let $X\subseteq \bP^N$ be a smooth variety of dimension $n\ge 4$, Picard number one and degree $d$. Let $x\in X$ be a closed point and let $L$ be the hyperplane class. Assume that for some constant $c>\sqrt{d}$ and for any general hyperplane section $Y\subseteq X$ containing $x$ we have $\tau_x(L_Y)\ge c$. Then $\tau_x(L)\ge c$.
\end{lem}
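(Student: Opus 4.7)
The plan is to use the uniqueness of the maximal-multiplicity divisor on each general hyperplane section $Y$ provided by Lemma~\ref{lem:unique mult=tau divisor} and to glue these into a single divisor on $X$. First, since $n-1\ge 3$, Lefschetz gives $\Pic(Y)=\bZ L_Y$ for any general hyperplane section $Y$ through $x$. The very ampleness of $L_Y$ yields $\eta_x(L_Y)\le\sqrt{(L_Y)^{n-1}}=\sqrt{d}<c\le\tau_x(L_Y)$, so Lemma~\ref{lem:unique mult=tau divisor} produces a unique irreducible $\bQ$-divisor $D_Y\sim_\bQ L_Y$ on $Y$ with $\mult_x D_Y=\tau_x(L_Y)\ge c$.

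Next, I would fix a general pencil $\{Y_t\}_{t\in\bP^1}$ of hyperplane sections through $x$, with base locus $Z=\bigcap_t Y_t$ (smooth, of codimension two in $X$, containing $x$); let $\sigma\colon\cY=\mathrm{Bl}_Z X\to X$ be the blowup, so that the strict transforms $\cY_t$ form the fibers of a morphism $\cY\to\bP^1$. The key step is to show that the restrictions $D_t|_Z$ are independent of $t$. For any two distinct generic $t_1,t_2$ in the pencil one has $Y_{t_1}\cap Y_{t_2}=Z$, and by Bertini (using $\dim D_{t_i}=n-2\ge 2$) each $D_{t_i}|_Z$ is an irreducible $\bQ$-divisor on $Z$ of class $L_Z$ with $\mult_x D_{t_i}|_Z\ge \mult_x D_{t_i}\ge c>\sqrt{d}\ge\eta_x(L_Z)$. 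The uniqueness argument in the proof of Lemma~\ref{lem:unique mult=tau divisor} — which uses only the $\eta<T$ regime and the irreducibility of the candidate divisors — then forces $D_{t_1}|_Z=D_{t_2}|_Z=:D_Z$.

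With the gluing in place, set $\cD:=\overline{\bigcup_t D_t}\subset\cY$, an irreducible prime divisor on $\cY$, and let $\tilde D:=\sigma_*\cD\subset X$. Since $D_t\cap Z=D_Z$ is the same proper subvariety of $Z$ for every $t$, one has $\tilde D\cap Z=D_Z\subsetneq Z$, hence $Z\not\subseteq\tilde D$. Consequently $\tilde D|_{Y_t}$ has $D_t$ as its only codimension-one component, and a projection-formula computation using $\sigma^*Y_t=\cY_t+E$ (with $E$ the exceptional divisor of $\sigma$, whose $\sigma_*$-image has dimension $n-3$ and so contributes nothing) gives $\tilde D\cdot Y_t=D_t$ as a divisor on $Y_t$. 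Comparing classes in $\Pic(Y_t)=\bZ L_{Y_t}$ yields $\tilde D\sim L$ on $X$. For generic $t$ in the generic pencil, $Y_t$ is transverse to the tangent cone of $\tilde D$ at $x$, so $\mult_x\tilde D=\mult_x\tilde D|_{Y_t}=\mult_x D_t\ge c$, which proves $\tau_x(L)\ge c$.

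The principal obstacle is the gluing step on $Z$: Lemma~\ref{lem:unique mult=tau divisor} as stated requires $\rho=1$, which may well fail on $Z$ when $n=4$ and $Z$ is only a surface. I would sidestep this by extracting just the uniqueness portion of the lemma's proof — the observation that two putative irreducible maximal divisors would force $\Bs|\cF_v^{m\lambda}H^0(Z,mL_Z)|$ to have codimension at least two, contradicting $\lambda>\eta_x(L_Z)$ — since the existence half (the only step that genuinely needs $\rho=1$) is supplied here for free by the explicit irreducible $\bQ$-divisors $D_{t_i}|_Z\sim_\bQ L_Z$.
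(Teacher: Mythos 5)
Your approach differs genuinely from the paper's in the gluing step: you glue along the base locus $Z$ of the pencil using a direct uniqueness argument, whereas the paper compares $D_s\cap D_t$ with the $D_s$'s via the degree bound $c^2 > d$ and then takes the irreducible divisor swept out by the $D_t$'s. Your handling of the Picard-rank subtlety is sound — the uniqueness half of the proof of Lemma~\ref{lem:unique mult=tau divisor} really does not use $\rho=1$, so extracting it and supplying the two irreducible divisors $D_{t_i}|_Z$ by hand is a legitimate shortcut.

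However, there is a genuine gap, at precisely the step you appeal to Bertini. You invoke Bertini to conclude that $D_{t_i}|_Z = D_{t_i}\cap Y_{t_j}$ is irreducible, citing only $\dim D_{t_i} = n-2 \ge 2$ as the hypothesis. But Bertini's irreducibility theorem for the linear system $|L\otimes \fm_x|$ restricted to $D_{t_i}$ needs the \emph{image} of $D_{t_i}$ under the projection $\pi_x$ from $x$ to have dimension $\ge 2$, not merely $D_{t_i}$ itself. If $D_{t_i}$ is swept out by lines through $x$, the fibres of $\pi_x|_{D_{t_i}}$ are positive-dimensional and the image has dimension $\le n-3$; when $n=4$ this image is at most a curve and the generic member $D_{t_i}\cap Y_{t_j}$ of the pencil can be reducible, so your uniqueness argument on $Z$ cannot be run. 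This scenario is not a phantom: the union of lines through $x$ can form a divisor $Z'\subseteq X$ (codimension one is the generic expectation), and then $\Supp(D_t)=Z_i\cap Y_t$ for a component $Z_i$ of $Z'$. The paper isolates exactly this as a ``special case'' and handles it separately \emph{before} its own Bertini step (``In the sequel, we may assume that $D_t$ is not covered by lines containing $x$...''); notice that in that special case the desired divisor $D$ on $X$ is handed to you directly as $\lambda_i Z_i$, so the lemma is easy — but you must observe it. Once you insert that case distinction, the remaining construction (building $\cD\subseteq\cY$, pushing forward, rescaling $\tilde D$ to a $\bQ$-divisor of class $L$, and checking $\mult_x$ via transversality of a generic pencil member to the tangent cone) is a valid, if slightly handwaved, alternative to the paper's argument.

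Two minor points you should also tidy: (i) you need $Z\not\subseteq\Supp(D_{t_i})$ for the restriction $D_{t_i}|_Z$ and the multiplicity inequality $\mult_x(D_{t_i}|_Z)\ge\mult_x D_{t_i}$ to make sense, which is clear for dimension reasons unless $\Supp(D_{t_i})=Z$ itself, a degenerate possibility best ruled out explicitly; and (ii) the identity ``$\tilde D\cdot Y_t = D_t$'' should be stated after rescaling $\tilde D$ to $\frac{1}{r}\tilde D\sim_\bQ L$, since $\tilde D$ is a prime divisor whereas $D_t=\lambda P_t$ is a $\bQ$-divisor with possibly $\lambda<1$.
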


\begin{proof}
For each $t\in \bP H^0(X,\cO_X(1)\otimes \fm_x)$, let $Y_t\subseteq X_t$ be the corresponding hyperplane sections containing $x$. When $t$ is general, $Y_t$ is smooth by Bertini theorem and has Picard number one by Lefschetz theorem. Since $L$ is very ample, we have $\eta_x(L_{Y_t})\le \sqrt{d}$ (see the remark before Lemma \ref{lem:unique mult=tau divisor}). It then follows from Lemma \ref{lem:unique mult=tau divisor} and the assumption that there exists a unique irreducible $\bQ$-divisor $0\le D_t\sim_\bQ L_{Y_t}$ on $Y_t$ such that $\mult_x D_t\ge c$. By a standard Hilbert scheme argument, we may also assume that $mD_t$ is integral for some fixed integer $m>0$. 

We first treat a special case. Suppose that a general $D_t$ is covered by lines passing through $x$. Let $Z\subseteq X$ be the union of all lines passing through $x$. Then $\Supp(D_t)\subseteq Z\cap Y_t$ and hence $Z$ has codimension at most one. Note that $Z\neq X$ since otherwise $X$ is a cone over its hyperplane section, but as $X$ is smooth it must be a linear subspace and it is easy to see that the assumption of the lemma is not satisfied. If $Z_1,\cdots,Z_k\subseteq Z$ are the irreducible components of codimension one in $X$, then as $\dim Z_i\ge 3$, its image under the projection from $x$ has dimension at least $2$, hence $Z_i\cap Y_t$ is irreducible for general $t$ by Bertini theorem. Since $D_t$ is also irreducible and is swept out by lines containing $x$, we deduce that $\Supp(D_t)=Z_i\cap Y_t$ for some $1\le i\le k$. As $X$ has Picard number one, there exists some $\lambda_i>0$ such that $D=\lambda_i Z_i\sim_\bQ L$. By comparing degrees, we then have $D_t=D|_{Y_t}$. Since $Y_t$ is a general hyperplane section, we also have $\mult_x D = \mult_x D_t\ge c$. This proves the lemma in this special case.

In the sequel, we may assume that $D_t$ is not covered by lines containing $x$. In particular, the projection from $x$ defines a generically finite rational map on $D_t$. Since $\dim D_t\ge 2$, we see that $D_t\cap Y_s$ is irreducible for general $s,t\in \bP H^0(X,\cO_X(1)\otimes \fm_x)$ by Bertini theorem. Note that each $D_t$ is also a codimension two cycle on $X$. If there exists some general $s\neq t$ such that $D_s\cap D_t$ has codimension $4$ (here we need $n\ge 4$ to ensure that $D_s\cap D_t$ is nonempty), then we get
\[
d=\deg (D_s\cdot D_t)\ge \mult_x D_s\cdot \mult_x D_t\ge c^2>d,
\]
a contradiction. Thus $D_s\cap D_t$ contains a divisor on both $D_s$ and $D_t$. Clearly this divisor is contained in $Y_s\cap D_t$, which is irreducible for general $s,t$. It follows that 
\[
\Supp(Y_s\cap D_t)\subseteq \Supp(D_s\cap D_t) \subseteq \Supp(D_s).
\]
Now consider a general pencil $\ell\in \bP H^0(X,\cO_X(1)\otimes \fm_x)$ and let $G\subseteq X$ be the divisor swept out by $\Supp(D_t)$ for general $t\in \ell$. In other words, $G$ is the image of the universal divisor $\cD\subseteq \cY$ under the natural evaluation map ${\rm ev}\colon \cY\to X$, where $\cY\to \ell$ is the corresponding family of hyperplane section. Since $D_t$ is irreducible for general $t$, we see that $\cD$ and $G$ are both irreducible. Since $X$ has Picard number one, we have $G\sim_\bQ rL$ for some $r\in\bQ$. Let $D=\frac{1}{r}G$. We claim that $\mult_x D\ge c$. Indeed, for general $t\in \ell$ and $s\in \bP H^0(X,\cO_X(1)\otimes \fm_x)$, we know that $G\cap Y_s$ is irreducible by Bertini theorem as before and $\Supp(Y_s\cap D_t)\subseteq D_s$ by the previous steps. As $t$ varies, the locus $\Supp(Y_s\cap D_t)$ sweeps out a divisor on $Y_s$, which is necessarily contained in both $D_s$ and $G\cap Y_s$. Since $D_s$ and $G\cap Y_s$ are both irreducible, we deduce that they are proportional to each other. By comparing degrees, we see that $D_s=D|_{Y_s}$. As $Y_s$ is a general hyperplane section, this implies $\mult_x D = \mult_x D_s \ge c$ and finishes the proof.
\end{proof}

\begin{lem} \label{lem:high mult div when dim=3}
Let $X\subseteq \bP^N$ be a smooth threefold of Picard number one and degree $d$. Let $L$ be the hyperplane class and let $x\in X$ be a closed point that is contained in at most finitely many lines on $X$. Assume that a very general hyperplane section $Y\subseteq X$ containing $x$ has Picard number one and for some constant $c>\sqrt[3]{d^2}$, we have $\tau_x(L_Y)\ge c$. Then $\tau_x(L)\ge c$.
\end{lem}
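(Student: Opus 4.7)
The plan is to adapt the strategy of Lemma \ref{lem:high mult div when dim>=4} to the threefold setting. For each very general $t \in T := |L \otimes \fm_x|$, applying Lemma \ref{lem:unique mult=tau divisor} to the Picard rank one surface $Y_t$ (valid because $\eta_x(L|_{Y_t}) \le \sqrt{d} < c$, using that $L|_{Y_t}$ is very ample) produces a unique irreducible $\bQ$-divisor $D_t \sim_\bQ L|_{Y_t}$ with $\mult_x D_t \ge c$. Writing $D_t = (d/d_0) C_t$ for an irreducible curve $C_t$ of degree $d_0$ and multiplicity $m_0$ at $x$, a standard Hilbert scheme argument lets me fix $d_0$ and $m_0$ so that $d m_0 / d_0 \ge c$ for $t$ in a dense open $T^\circ \subseteq T$. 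I would first rule out $d_0 = 1$: if $C_t$ were always a line through $x$, the finitely-many-lines hypothesis combined with the irreducibility of $T$ would force $C_t = \ell$ to be a fixed line, and then $\ell$ would lie in the base locus of $|L \otimes \fm_x|$, which is $\{x\}$ since $L$ is very ample, a contradiction.

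Next, I would consider $W := \overline{\bigcup_{t \in T^\circ} C_t} \subseteq X$. If $W \ne X$, then $W$ must be an irreducible surface, since it cannot be a curve (otherwise each $C_t \subseteq W$ would force $W$ into the base locus $\{x\}$ of $T$). By the Picard rank one hypothesis, $W \sim_\bQ r L$ for some $r \in \bQ_{>0}$. Bertini makes $W|_{Y_s}$ an irreducible curve for very general $s$, and since $C_s \subseteq W|_{Y_s}$, both being irreducible forces $W|_{Y_s} = k C_s$ with $k = r d/d_0$. Multiplicity comparison then yields $\mult_x W = \mult_x (W|_{Y_s}) = k m_0 = r \cdot d m_0/d_0 \ge r c$, so $(1/r) W \sim_\bQ L$ has multiplicity $\ge c$ at $x$, which proves $\tau_x(L) \ge c$.

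The main obstacle is the case $W = X$, where the curves $\{C_t\}$ sweep out all of $X$. Here I would take a general pencil $\ell \subseteq T$ and form the swept-out irreducible divisor $G_\ell := \overline{\bigcup_{r \in \ell} C_{t(r)}} \sim_\bQ r_\ell L$. The conclusion $\tau_x(L) \ge c$ would follow if $G_\ell|_{Y_s}$ were supported on $C_s$ for very general $s \notin \ell$, which reduces to showing that, for very general $s, t \in T^\circ$, the zero-dimensional scheme $C_t \cap Y_s$ is supported on $C_s \cup \{x\}$; granting this, the same degree and multiplicity matching as in the previous case gives $G_\ell|_{Y_s} = k C_s$ with $k = r_\ell d/d_0$ and $\mult_x G_\ell \ge r_\ell c$. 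In dimension $\ge 4$ the analogous containment came from the cycle bound $\deg(D_s \cdot D_t) = d < c^2$, but in dimension three the product $[D_s] \cdot [D_t]$ lives in $A^4(X) = 0$ for dimensional reasons, so the naive degree argument is unavailable. Instead, the strict inequality $c > \sqrt[3]{d^2}$ must be exploited via a more delicate intersection analysis, likely by introducing a third general hyperplane section $Y_u$ and using that the unique irreducible $\bQ$-divisor $C_u$ on $Y_u$ with multiplicity $\ge c$ at $x$ would be forced to contain any stray intersection points of $C_t \cap Y_s$ lying off $C_s \cup \{x\}$, contradicting the uniqueness asserted in Lemma \ref{lem:unique mult=tau divisor} through a local triple-intersection estimate at $x$ that uses $c^3 > d^2$.
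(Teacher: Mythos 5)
Your proposal correctly identifies that the degree argument from Lemma \ref{lem:high mult div when dim>=4} (intersecting two codimension-two cycles $D_s$ and $D_t$) breaks down in dimension three, and your opening steps (applying Lemma \ref{lem:unique mult=tau divisor} to the Picard-one surface $Y_t$, a Hilbert scheme argument to fix $d_0,m_0$, ruling out $d_0=1$ via the finitely-many-lines hypothesis) are all sound. However, the proposal has a genuine gap precisely at the point you flag as the ``main obstacle'': the case $W=X$ is not resolved. What you offer there is a plan for a plan — a vague ``local triple-intersection estimate at $x$ that uses $c^3 > d^2$'' — with no actual inequality or cycle computation to back it up. As you yourself note, $[D_s]\cdot[D_t]$ lives in the wrong degree, and it is not at all clear that introducing a third hyperplane $Y_u$ and invoking uniqueness in Lemma \ref{lem:unique mult=tau divisor} can be turned into a rigorous argument along the lines you sketch; the stray intersection points of $C_t\cap Y_s$ are not obviously constrained by $C_u$. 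So the proposal, as written, does not constitute a proof.

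You also miss the key idea that makes the paper's argument short and avoids the $W=X$ dichotomy entirely. Instead of trying to sweep out a divisor from the family $\{C_t\}$, one exploits $c>\sqrt[3]{d^2}$ directly at the level of $X$: since $\tau_x(L)\ge\sqrt[3]{(L^3)}=\sqrt[3]{d}>d/c$, there is an effective $\bQ$-divisor $0\le D\sim_\bQ L$ on $X$ with $\mult_x D>d/c$, and by $\rho(X)=1$ one may take $D$ irreducible. Now compare $D$ (a surface) with the curve $D_t$: the intersection number is $(D\cdot D_t)=(L^3)=d$, but if $D_t\not\subseteq D$ the local contribution at $x$ is $\ge\mult_x D\cdot\mult_x D_t>(d/c)\cdot c=d$, a contradiction. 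Hence $\supp(D_t)\subseteq\supp(D)$. Since there are only finitely many lines through $x$, projection from $x$ is generically finite on $D$, so $D\cap Y_t$ is irreducible by Bertini, and matching degrees gives $D_t=D|_{Y_t}$; then $\mult_x D=\mult_x D_t\ge c$ as $Y_t$ is a general hyperplane section through $x$. In short, the right move is to produce the global divisor $D$ from the numerical hypothesis and intersect it against the \emph{curve} $D_t$ (a proper surface-curve intersection), rather than trying to intersect two curves $D_s,D_t$ or reconstruct $D$ from the family.
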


\begin{proof}
Let $Y_t\subseteq X_t$ be the hyperplane sections corresponding to \[t\in\bP H^0(X,\cO_X(1)\otimes \fm_x).\] As in the proof of Lemma \ref{lem:high mult div when dim>=4}, when $t$ is very general, there exists a unique irreducible $\bQ$-divisor $0\le D_t\sim_\bQ L|_{Y_t}$ such that $\mult_x D_t\ge c$. Note that $D_t$ is a curve on $X$. Since $(L^3)=d>\left(\frac{d}{c}\right)^3$, there exists a $\bQ$-divisor $0\le D\sim_\bQ L$ on $X$ such that $\mult_x D>\frac{d}{c}$. Since $X$ has Picard number one, we may further assume as before that $D$ is irreducible. Since 
\[
(D\cdot D_t)=d<\mult_x D\cdot \mult_x D_t,
\]
we see that $\supp(D_t)\subseteq \supp(D)$. By assumption, the projection from $x$ defines a generically finite rational map on $D$, hence $D\cap Y_t$ is irreducible by Bertini theorem and $D_t=D|_{Y_t}$ as in the proof of Lemma \ref{lem:high mult div when dim>=4}. As before this implies $\mult_x D = \mult_x D_t\ge c$.
\end{proof}



We now prove Theorem \ref{thm:hypersurface}. The key point is that, while the ``bad'' locus is in general non-empty, it consists of at most a countable number of points. This allows us to use the K-stability criterion \cite{AZ-K-adjunction}*{Lemmas 4.23} from our previous work and then we can just apply Theorem \ref{thm:delta and Seshadri} to conclude.

\begin{proof}[Proof of Theorem \ref{thm:hypersurface}]
By \cite{AZ-K-adjunction}*{Lemmas 4.23 and 4.25}, it suffices to show that $\delta_Z(X)\ge \frac{n+1}{n}$ for any subvariety $Z\subseteq X$ of dimension $\ge 1$. Let $L$ be the hyperplane class and let $d:=(L^n)=n+2-r$. By assumption $n\ge d$ and $d\ge 26$. We first show that $\tau_x(L)\le \sqrt{d}+1$ for a very general point $x\in Z$. Suppose not, then by Lemma \ref{lem:unique mult=tau divisor} there exists a unique irreducible $\bQ$-divisor $0\le D_x\sim_\bQ L$ such that $\mult_x D_x > \sqrt{d}+1$ for all $x\in Z$ (it is easy to see that $\eta_x(L)\le \sqrt{d}$). By a standard Hilbert scheme argument, we can find an irreducible and reduced divisor $G\subseteq X\times U$ (where $U\subseteq Z$ is an affine open subset) and an integer $m>0$ such that $G\in |\pr_1^*\cO_X(mL)|$ and $G|_{X\times \{x\}}=mD_x$ for very general $x\in Z$. In particular, by \cite{EKL-Seshadri}*{Lemma 2.1} we have $\mult_W G = \mult_x (mD_x) > m(\sqrt{d}+1)$, where $W\subseteq X\times U$ is the graph of the embedding $U\hookrightarrow X$. By \cite{EKL-Seshadri}*{Proposition 2.3}, there exists a divisor $G'\in |\pr_1^*\cO_X(mL)|$ such that $G\not\subseteq \Supp(G')$ and $\mult_W G' > m(\sqrt{d}+1)-1\ge m\sqrt{d}$. Let $D'_x=\frac{1}{m}G'|_{X\times \{x\}}\sim_\bQ L$ for general $x\in Z$, then $D_x$ and $D'_x$ have no common components and $\mult_x D'_x>\sqrt{d}$. It follows that
\[
d=\deg (D_x\cdot D'_x)\ge \mult_x D_x\cdot \mult_x D'_x > (\sqrt{d}+1)\sqrt{d}>d,
\]
a contradiction. Hence $\tau_x(L)\le \sqrt{d}+1$ for a very general point $x\in Z$.

Fix some $x\in Z$ with $\tau_x(L)\le \sqrt{d}+1$ and let $Y\subseteq X$ be a general linear subspace section of dimension $3$ that passes through $x$. By Lefschetz theorem, we know that $Y$ has Picard number one. Let $S\in |H^0(Y,\cO_Y(2L)\otimes \fm_x)|$ be a very general member. By \cite{DGF-Noether-Lefschetz}*{Theorem 1.1}, the surface $S$ also has Picard number one. By Lemma \ref{lem:high mult div when dim>=4}, we have $\tau_x(L_Y)\le \sqrt{d}+1\le \sqrt[3]{d^2}$ (as $d\ge 26$) and hence $\tau_x(2L_Y)=2\tau_x(L_Y)\le 2\sqrt[3]{d^2}$. Since $(2L\cdot C)\ge 2$ for any curves $C\subseteq Y$, the threefold $Y$ does not contain any lines under the embedding given by $|2L|$, thus by Lemma \ref{lem:high mult div when dim=3} we also have $\tau_x(2L_S)\le 2\sqrt[3]{d^2}$. By Corollary \ref{cor:surface delta Pic=Z} we get $\delta_x(2L_S)\ge \frac{3}{2\sqrt[3]{d^2}}$, thus by \cite{AZ-K-adjunction}*{Lemma 4.6}, we have $\delta_x(2L_Y)\ge \frac{2}{\sqrt[3]{d^2}}$, or $\delta_x(L_Y)\ge \frac{4}{\sqrt[3]{d^2}}$. A repeated use of \cite{AZ-K-adjunction}*{Theorem 4.6} then yields
\[
\delta_Z(L)\ge \delta_x(L)\ge \frac{n+1}{4}\delta_x(L_Y)\ge  \frac{n+1}{\sqrt[3]{d^2}},
\]
hence $\delta_Z(X)=\frac{\delta_Z(L)}{r}\ge \frac{n+1}{n}$ as long as $n\ge r\sqrt[3]{d^2}$. Since $n\ge d$, this is automatic when $n\ge r^3$. The proof is now complete.
\end{proof}

\section{Fano threefolds}\label{sec:3folds}

As another application of Theorem \ref{thm:delta and Seshadri}, we prove in this section the uniform K-stability of most Fano threefolds of Picard number one. Recall that the degree of a Fano threefold $X$ of Picard number one is defined to be $(H^3)$, where $H$ is the ample generator of $\Pic(X)$. Using the classification of Fano threefolds \cite{Fano-book}, we may restate Theorem \ref{main-3} as follows.

\begin{thm}\label{thm:Fano-3-fold}
Let $X$ be a smooth Fano threefold of Picard number one. Assume that $X$ has index two and degree at most $4$, or it has index one and degree at most $16$. Then $X$ is uniformly K-stable. 
\end{thm}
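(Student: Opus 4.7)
The plan is to apply Theorem \ref{thm:delta and Seshadri} to reduce uniform K-stability of $X$ to a Seshadri-constant estimate on a hyperplane section, and then handle that estimate family by family in the Iskovskikh classification. Writing $-K_X=rH$ with $H$ the ample generator of $\Pic(X)$ and $r\in\{1,2\}$ the Fano index (higher indices are ruled out by the hypotheses), uniform K-stability is equivalent to $\delta_x(H)>r$ at every $x\in X$. Since $\dim X=3$, Theorem \ref{thm:delta and Seshadri} applied with $L=H$ yields
\[
\delta_x(H)\ge \frac{4}{d}\,\varepsilon_x(H|_S)
\]
for any irreducible $S\in|H|$ through $x$ and smooth at $x$, where $d=(H^3)$. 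The target thereby becomes the concrete geometric inequality $\varepsilon_x(H|_S)>rd/4$ for some well-chosen $S$ through each $x\in X$.

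Next I would examine the geometry of these hyperplane sections. Adjunction gives $K_S=(1-r)H|_S$, so in the index 2 cases $d\in\{1,2,3,4\}$ the surface $S$ is a del Pezzo surface of degree $d$ with $-K_S=H|_S$, whereas in the index 1 cases $d\in\{2,4,\ldots,16\}$ it is a K3 surface polarized by $H|_S$ of genus $g=d/2+1$. I would arrange $S$ smooth at $x$ via Bertini on $|H\otimes\fm_x|$ (replacing $H$ by a small multiple in families where additional base conditions arise) and use Lefschetz/Noether--Lefschetz to obtain $\rho(S)=1$ for very general such $S$. In that setting Lemma \ref{lem:Seshadri*pseudoeff} converts the required Seshadri lower bound into the upper bound $\tau_x(H|_S)<4/r$ on the pseudo-effective threshold, which is often easier to control.

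The substantive content is to verify this bound for each of the twelve remaining families. For del Pezzo sections in the index 2 range I would use the classical description of lines, conics and $(-1)$-curves together with the formula $\varepsilon_x(H|_S)=\inf_C (H|_S\cdot C)/\mult_x C$, choosing $S$ so as to minimize contributions of short curves through $x$. For K3 sections in the index 1 range I would bound $\tau_x(H|_S)$ by multiplicity arguments in the spirit of the proof of Theorem \ref{thm:hypersurface}: apply Lemma \ref{lem:unique mult=tau divisor} together with the residual-divisor technique of \cite{EKL-Seshadri} to either bootstrap any hypothetical high-multiplicity divisor on $S$ into a $\bQ$-divisor on $X$ through $x$ whose self-intersection exceeds $H^3=d$, or to use the uniqueness of such a divisor (when it does exist) to control its shape and rule out the bad point.

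The main obstacle is the two borderline families in which $rd/4$ meets the a priori maximum $\sqrt{d}$ of $\varepsilon_x(H|_S)$, namely $d=4$ in index 2 and $d=16$ in index 1. Here the Seshadri inequality cannot be strict, and the Theorem \ref{thm:delta and Seshadri} bound can only fail to yield $\delta_x(H)>r$ through one of its three equality alternatives. Case (1) forces $d=1$ and does not arise; case (3) requires a prime divisor $G\subseteq X$ with $H\equiv\tau_x(H|_S)\cdot G$, which by $\rho(X)=1$ forces $G\equiv aH$ for some positive integer $a$ and therefore $\tau_x(H|_S)=1/a\le 1$, contradicting $\tau_x(H|_S)\ge\sqrt{d}>1$; so only case (2) survives, demanding $\varepsilon_x(H|_S)=\tau_x(H|_S)=\sqrt{d}$ together with a computing valuation whose center on $X$ has dimension at least one. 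The remaining step, which I expect to be the hardest, is to exclude this last alternative by exhibiting, for each $x$, an explicit divisor in $|H|$ through $x$ of multiplicity strictly less than $\sqrt{d}$, exploiting the concrete realizations of $V_4$ as a complete intersection of two quadrics in $\bP^5$ and of the genus 9 Fano threefolds (sections of the symplectic Grassmannian). As a universal backup, \cite{AZ-K-adjunction}*{Lemmas 4.23, 4.25} reduce the required bound to $\delta_Z(H)>3r/4$ for positive-dimensional $Z$, which provides the slack needed to push the Seshadri inequality strictly through in any case that resists direct analysis.
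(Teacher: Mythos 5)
Your overall skeleton is right and matches the paper: reduce $\delta_x(X)>1$ to $\delta_x(H)>r$, apply Theorem~\ref{thm:delta and Seshadri} with a well-chosen hyperplane section $S$ through $x$, and observe that the adjoint surface is del Pezzo (index two) or K3 (index one). Your identification of the borderline degrees $d=4$ (index two) and $d=16$ (index one) is also correct, as is your exclusion of equality alternative~(3) via $\Pic(X)=\bZ\cdot H$. But two of the steps you treat as routine are in fact the substantive content of the paper's proof, and one of your proposed resolutions would not work.

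First, ``use Lefschetz/Noether--Lefschetz to obtain $\rho(S)=1$ for very general such~$S$'' is not an off-the-shelf application. Classical Noether--Lefschetz controls \emph{general} hyperplane sections, not sections constrained to pass through a fixed point $x$; the paper devotes Subsection~\ref{sec:NL Fano 3fold} (Lemma~\ref{lem:Lefschetz pencil exist} and Theorem~\ref{thm:Noether-Lefschetz}) to proving a Noether--Lefschetz statement with a base-point constraint, and this genuinely depends on the index-one, degree $\ge 6$ hypotheses (that $X$ is cut out by quadrics and cubics and carries only finitely many lines through any point). Indeed the statement \emph{fails} for quartic threefolds at generalized Eckardt points, which is why the paper treats index-one degrees $\le 4$ by the crude bound $\varepsilon_x(H|_S)\ge 1$ rather than through $\rho(S)=1$. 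Your plan, which funnels the whole index-one range through Noether--Lefschetz, would hit this wall.

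Second, your plan for the two borderline equality cases does not close the gap. You propose to exclude alternative~(2) ``by exhibiting, for each $x$, an explicit divisor in $|H|$ through $x$ of multiplicity strictly less than $\sqrt{d}$.'' But $\tau_x$ is a supremum over \emph{all} effective $\bQ$-divisors proportional to $H|_S$; a single low-multiplicity divisor says nothing, and since $\varepsilon_x(H|_S)\le\sqrt{d}$ is forced anyway, the equality $\varepsilon_x=\tau_x=\sqrt{d}$ cannot be ruled out by a direct Seshadri computation in these degrees. The paper's actual strategy is entirely different: alternative~(2) tells you the computing valuation has positive-dimensional center, and one then has to \emph{rule that out directly as a destabilizer}. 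This requires the whole machinery of Subsection~\ref{sec:equality conditions} (the refined adjunction inequality $A_X(v)/T(L;v)\le\frac{n-1}{n+1}\delta(L)$ of Lemma~\ref{lem:equality in adjunction} and Corollary~\ref{cor:equality rho=1}), followed by multiplicity estimates against moving divisors (via \cite{dFEM-mult-and-lct} and \cite{Z-bsr-loc-closed}), classification facts about lines on the relevant Fano threefolds (\cite{CS-lct-3fold}, \cite{Fano-book}*{Theorems 4.3.3 and 4.3.7}), and, for degree $16$ curves of degree $\ge 2$, a Nadel-vanishing/multiplier-ideal argument borrowed from~\cite{Z-cpi}. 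None of this is anticipated in your sketch, and the ``backup'' via \cite{AZ-K-adjunction}*{Lemmas 4.23, 4.25} (for which you would need $\delta_Z(X)\ge\tfrac{4}{3}$, not $\delta_Z(H)>3r/4$) is strictly harder than what you already can't obtain, so it provides no slack.

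Finally, a smaller point: for the bound $\tau_x(H|_S)\le 4$ on K3 sections, the residual-divisor technique of \cite{EKL-Seshadri} you propose is not what the paper uses and would be more painful; the paper instead uses a short arithmetic-genus computation ($dm^2 = 2p_a(C)-2 \ge \mult_x C\,(\mult_x C - 1)-2$) that works uniformly once $\rho(S)=1$ and $K_S\sim 0$ are in hand. Your route might be salvageable but the genus argument is both cleaner and what actually appears.
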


We remark that while Fano threefolds of Picard number one have been fully classified (most of them are complete intersections in rational homogeneous manifolds), we only need very little information from this classification. As will be seen below, the two key properties of Fano threefolds we need are the following.
\begin{enumerate}
    \item For any closed point $x\in X$ on the Fano threefold, there exists some smooth member $S\in |H|$ passing through $x$.
    \item Most Fano threefolds $X$ of Fano index one are cut out by quadrics and there are only finitely many lines through a fixed point.
\end{enumerate}
Before presenting the details, let us give a brief summary of the proof of Theorem \ref{thm:Fano-3-fold} and indicate where the above two properties are used. We focus on the case when $X$ has Fano index one (the index two cases are easier). Clearly, point (1) allows us to apply Theorem \ref{thm:delta and Seshadri} and immediately obtain
\begin{equation} \label{eq:delta index one}
    \delta_x(X)\ge \frac{4\varepsilon_x(H|_S)}{(H^3)}.
\end{equation}
Note that $S$ is a K3 surface. Seshadri constants on K3 surfaces have been studied by \cite{Knu-K3-Seshadri} and usually they are easier to compute when the surfaces have Picard number one, thus we seek to choose the surface $S$ carefully so that it not only contains $x$, but also has Picard number one. This is a Noether-Lefschetz type problem with a base point constraint and will be studied in Subsection\,\ref{sec:NL Fano 3fold}. Note that this requirement on $S$ can fail on certain Fano threefolds (e.g.\ quartic threefolds with generalized Eckardt points), but we will show that it is satisfied whenever point (2) holds. Fortunately, in the remaining cases already the trivial bound of $\varepsilon_x(H|_S)$ is enough to imply K-stability through \eqref{eq:delta index one}. Once we find a surface $S$ of Picard number one, the argument is relatively straightforward using Seshadri constants calculations and Theorem \ref{thm:delta and Seshadri}, except when $X$ has degree $16$. In this case we only get $\delta(X)\ge 1$ (i.e.\ K-semistability) and need to study the equality condition a bit further. This is done in Subsection\,\ref{sec:equality conditions} by analyzing the inductive step, which is based on inversion of adjunction, in the proof of Theorem \ref{thm:delta and Seshadri}.

\subsection{Equality conditions in adjunction} \label{sec:equality conditions}

As indicated in the summary of proof above, the goal of this subsection is to further analyze the equality conditions in Theorem \ref{thm:delta and Seshadri}. The main technical results are Lemma \ref{lem:equality in adjunction} and Corollary \ref{cor:equality rho=1}, which list a few constraints that need to be satisfied in order to have equality in the adjunction of stability thresholds. They will play an important role when treating Fano threefolds of degree $16$ and complete intersections of two quadrics. 

\begin{lem} \label{lem:S=n/(n+1)T}
Let $X$ be a projective variety of dimension $n$, let $L$ be an ample line bundle on $X$ and let $v$ be a valuation of linear growth on $X$. Then $S(L;v) = \frac{n}{n+1}T(L;v)$ if and only if 
\begin{equation} \label{eq:vol condition for S=n/(n+1)T}
    \frac{\vol(L;v\ge t)}{(L^n)}=1-\left(\frac{t}{T(L;v)} \right)^n
\end{equation}
for all $0\le t\le T(L;v)$. If in addition $v$ is quasi-monomial, then its center is a closed point on $X$.
\end{lem}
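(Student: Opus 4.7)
The $(\Leftarrow)$ direction is an immediate integration: $\int_0^T(1-(t/T)^n)\,\rd t = nT/(n+1)$. For the $(\Rightarrow)$ direction, my plan is to prove the pointwise upper bound
\[
\frac{\vol(L;v\ge t)}{(L^n)} \;\le\; 1-\left(\frac{t}{T}\right)^n \qquad \text{for every } t\in[0,T],
\]
so that the hypothesis $S(L;v)=\tfrac{n}{n+1}T$, together with the identity $\int_0^T(1-(t/T)^n)\,\rd t=\tfrac{n}{n+1}T$, forces equality pointwise by continuity of $\vol(L;v\ge\cdot)$.

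To prove the pointwise bound I would pass to the Okounkov body $\Delta=\Delta(L)\subseteq\bR^n_{\ge 0}$ of $L$ for an admissible flag adapted to $v$, and let $\ell=\ell_v$ be the nonnegative linear functional representing $v$, so that $\vol(L;v\ge t)=n!\cdot\mathrm{Vol}(\Delta\cap\{\ell\ge t\})$ and $\max_\Delta\ell=T$. Ampleness of $L$ places the origin in $\Delta$, so by convexity the scaling map $f_s(x)=sx$ sends $\Delta$ into itself; for $s=t/T$ one has $\ell(f_s(x))\le t$, so $f_s(\Delta)\subseteq \Delta\cap\{\ell\le t\}$, and comparing Lebesgue volumes yields $\mathrm{Vol}(\Delta\cap\{\ell\le t\})\ge (t/T)^n\,\mathrm{Vol}(\Delta)$, which rearranges to the desired bound.

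For the quasi-monomial addendum I would track the equality case of this scaling argument: the pointwise equality $\vol(L;v\ge t)/(L^n)=1-(t/T)^n$ forces $f_{t/T}(\Delta)=\Delta\cap\{\ell\le t\}$ up to measure zero, and at $t=0$ this reads $\Delta\cap\{\ell=0\}=\{0\}$. Realizing a quasi-monomial $v$ as monomial on a log resolution along a simple normal crossings divisor $D_1+\cdots+D_r$ with strictly positive weights $t_1,\ldots,t_r$, where $r$ is the codimension of the center $Z=C_X(v)$, and choosing the admissible flag with $Y_i=D_i$ for $i\le r$, the functional becomes $\ell(x)=\sum_{i=1}^r t_i x_i$, so $\{\ell=0\}\cap\bR^n_{\ge 0}=\{x_1=\cdots=x_r=0\}$, and the face $\Delta\cap\{x_1=\cdots=x_r=0\}$ is identified with the Okounkov body of $L|_Z$ on $Z$ for the induced flag, of dimension $\dim Z=n-r$. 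Requiring this face to be a single point forces $n-r=0$, so $Z$ is a closed point.

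The main obstacle I anticipate is the rigorous identification of the face of the Okounkov body with the restricted Okounkov body $\Delta(L|_Z)$; this relies on Newton--Okounkov body theory and needs the flag sufficiently compatible with $L$, which is standard for ample $L$ with an admissible flag but requires care. An alternative route that stays within the paper's own framework is to work with the restricted volume from Lemmas~\ref{lem:restricted vol} and~\ref{lem:formula for S-inv thru restricted vol}: the function $G=\vol_{Y|E}(\pi^*L-tE)^{1/(n-1)}$ is concave, and a Chebyshev-type inequality forces the Fujita bound $S(L;v)\le nT/(n+1)$ with equality only when $G$ is linear through the origin, the case $G(0)>0$ being excluded at equality by Lemma~\ref{lem:calculus center=div} together with the monotonicity of $\vol_{Y|E}$ when $C_X(v)$ is a divisor; one then reads $\vol_{Y|E}(\pi^*L-tE)=c^{n-1}t^{n-1}$ and pins down the codimension via the intersection-theoretic expansion of this restricted volume.
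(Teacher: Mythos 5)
Your $(\Leftarrow)$ direction is fine. But your $(\Rightarrow)$ direction and the quasi-monomial addendum both rest on a representation of $v$ that does not exist in the generality the lemma needs. You assert that an admissible flag ``adapted to $v$'' makes $v$ into a nonnegative \emph{linear} functional $\ell$ on the Okounkov body $\Delta(L)$. This is true when $v=\ord_E$ is divisorial and the flag starts with $E$, and in that special case your scaling argument goes through. For a general valuation of linear growth (and even for a general quasi-monomial valuation), the Boucksom--Chen concave transform $G_v$ on $\Delta(L)$ is merely concave, not linear, and concavity gives $G_v(sx)\ge sG_v(x)+(1-s)G_v(0)$ --- the \emph{wrong} direction for the inclusion $f_s(\Delta)\subseteq\Delta\cap\{G_v\le sT\}$. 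Even for a quasi-monomial $v$ monomial along an snc divisor $D_1+\cdots+D_r$ with weights $(a_1,\dots,a_r)$, the flag $Y_i=D_1\cap\cdots\cap D_i$ does \emph{not} realize $v$ as the linear functional $\sum a_ix_i$ on the flag coordinates: flag valuations are lexicographic while $v$ is a weighted minimum, and they disagree (take $s=x_1^2+x_2^{100}$ with $(a_1,a_2)=(1,0.1)$: the flag coordinates are $(0,100,\dots)$, giving $\sum a_i\nu_i=10$, while $v(s)=2$). Your ``alternative route'' via Lemmas~\ref{lem:restricted vol}, \ref{lem:formula for S-inv thru restricted vol} and \ref{lem:calculus center=div} has the same restriction: those statements are for divisorial valuations, not general $v$ of linear growth.

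The addendum has two further problems. You identify $r$, the number of divisors in the monomial description, with $\mathrm{codim}_X C_X(v)$; these are different (the former can be strictly smaller, and in general $D_1\cap\cdots\cap D_r$ can map to a center of smaller dimension on $X$). And even granting the linear-functional setup, the face $\Delta\cap\{x_1=\cdots=x_r=0\}$ is naturally related to a body built from $\pi^*L$ restricted to $D_1\cap\cdots\cap D_r$ \emph{on the resolution}, not to $\Delta(L|_Z)$ on $X$; when $\dim Z<n-r$ this restriction is not big, and the dimension count you rely on breaks.

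The paper's proof avoids Okounkov bodies entirely. It passes to a very ample multiple, refines the complete linear series by a general hyperplane section $H\in|L|$ (Definition~\ref{defn:refinement}), uses $S(L;v)=S(\W;\bcF_v)$ (Lemma~\ref{lem:compare S after refinement}) and the double-integral expression for $S(\W;\cdot)$, and applies the elementary pointwise bound $\vol_{\W^t}(\gamma)\le\vol_\W(\gamma)=(1-\gamma)^{n-1}(L^n)$ to get $S\le\frac{n}{n+1}T$ with equality forcing $\vol_{\W^t}(\gamma)=(1-\gamma)^{n-1}(L^n)$ for all $t,\gamma$, hence the volume formula. For the addendum it uses Izumi's inequality ($v(f)\le A_Y(v)\mult_Z f$ on a log resolution) together with a volume-comparison theorem to show that if $\dim C_X(v)\ge 1$ then $\vol_{\W^t}(\gamma)<\vol_\W(\gamma)$ for small $t,\gamma$, contradicting the equality characterization. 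You should rebuild your $(\Rightarrow)$ argument around this refinement technique, which works uniformly for all valuations of linear growth, rather than around a linear functional on an Okounkov body.
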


\begin{rem}
In general we have $S(L;v) \le \frac{n}{n+1}T(L;v)$ (see for example \cite{AZ-K-adjunction}*{Lemma 4.2}), thus the above statement gives a description of the equality conditions. When $v$ is divisorial, this is essentially proved in \cite{Fuj-alpha}*{Proposition 3.2}.
\end{rem}

\begin{proof}
Assume that $S(L;v) = \frac{n}{n+1}T(L;v)$. Let $T=T(L;v)$. After replacing $L$ by $rL$ for some sufficiently large integer $r$ we may assume that $L$ is very ample. Let $H\in |L|$ be a general member, let $\W$ be the refinement by $H$ of the complete linear series $\V$ associated to $L$, and let $\cF$ be the filtration on $\W$ induced by the filtration $\cF_v$ on $R(X,L)$. Concretely, $\W$ is $\bN^2$-graded and we have 
\begin{equation} \label{eq:filtered pieces of W}
    \cF^\lambda W_{m,j} = \text{Im}(\cF_v^\lambda H^0(X,\cO_X((m-j)L))\to H^0(H,\cO_H((m-j)L)).
\end{equation}
In particular, $W_{m,j}=H^0(H,\cO_H((m-j)L))$ when $m-j\gg 0$. By the definition of the pseudo-effective threshold $T(L;v)$, it follows that 
\[
\Supp(\W^t)\cap (\{1\}\times \bR) = [0,1-t/T]
\]
where $\W^t$ ($0<t<T$) is the multi-graded linear series given by $W^t_{m,j}=\cF^{mt} W_{m,j}$. By Lemma \ref{lem:compare S after refinement}, we also have $S(L;v)=S(\V;\cF_v)=S(\W;\cF)$. Let 
\[
f(t,\gamma) = \vol_{\W^t}(\gamma) \quad (0<t<T, 0<\gamma<1-t/T).
\]
It is clear that $f(t,\gamma)\le \vol_{\W}(\gamma)=(1-\gamma)^{n-1}(L^n)$. Note that $\vol(\W)=\vol(L)=(L^n)$. We then obtain
\begin{align*}
    S(L;v)=S(\W;\cF) & = \frac{1}{(L^n)}\int_0^T \vol(\W^t)\rd t \\ & =\frac{n}{(L^n)}\int_0^T \rd t \int_0^{1-\frac{t}{T}} f(t,\gamma) \rd \gamma \\
    & \le \int_0^T \rd t \int_0^{1-\frac{t}{T}} n(1-\gamma)^{n-1} \rd \gamma = \frac{n}{n+1}T,
\end{align*}
where the second equality follows from \cite{AZ-K-adjunction}*{Corollary 2.22} while the third equality is implied by \cite{AZ-K-adjunction}*{Lemma 2.23}.
Since $S(L;v)=\frac{n}{n+1}T$ by assumption, we see that $f(t,\gamma)=(1-\gamma)^{n-1}(L^n)$ for all $t,\gamma$. By another application of \cite{AZ-K-adjunction}*{Lemma 2.23} (used in the second equality below), we then have 
\begin{align*}
    \vol(L;v\ge t)=\vol(\W^t) & =n\int_0^{1-\frac{t}{T}} f(t,\gamma) \rd \gamma \\
    & = \int_0^{1-\frac{t}{T}} n(1-\gamma)^{n-1}(L^n) \rd \gamma = \left( 1- \left(\frac{t}{T}\right)^n\right)(L^n),
\end{align*}
which proves \eqref{eq:vol condition for S=n/(n+1)T}. Conversely, it is easy to check that $S(L;v)=\frac{n}{n+1}T$ as long as \eqref{eq:vol condition for S=n/(n+1)T} holds. This proves the first part of the lemma.

Suppose next that $v$ is quasi-monomial and $\dim C_X(v)\ge 1$. Let $\pi\colon Y\to X$ be a log resolution and let $Z=C_Y(v)$. Since $\dim C_X(v)\ge 1$, we have $(Z\cdot \pi^*L)\neq 0$ by the projection formula, hence $H_Y$ intersects $Z$ where $H_Y$ is the strict transform of $H$. By Izumi's inequality we have $\lct_Z(f)\ge \frac{1}{\mult_Z(f)}$. On the other hand $\lct_Z(f)\le \frac{A_Y(v)}{v(f)}$ by definition, thus $v(f)\le A_Y(v) \mult_Z(f)$ for any $f\in \cO_{Y,Z}$. Up to rescaling of the valuation $v$ we may assume that $A_Y(v)=1$. Thus by \eqref{eq:filtered pieces of W} we see that $\cF^\lambda W_{m,j}\subseteq H^0(H_Y,\cI^\lambda_{H_Y\cap Z}((m-j)\pi^*L))$. As $\pi^*L$ is nef and big, by \cite{FKL-volume}*{Theorem A(v)} we obtain $\vol_{\W^t}(\gamma)<\vol_{\W}(\gamma)$ for all $0<t,\gamma\ll 1$. By the proof above it then follows that $S(L;v)<\frac{n}{n+1}T$. This proves the second part of the lemma.
\end{proof}

\begin{lem} \label{lem:equality in adjunction}
Let $X$ be a projective variety of dimension $n\ge 2$ with klt singularities and let $L$ be an ample line bundle on $X$ such that the linear system $|L|$ is base point free. Assume that $\delta(L)$ is computed by some valuation $v$ on $X$ with $\dim C_X(v)\ge 1$ and
\[
\delta(L)\le\frac{n+1}{n}\delta_Z(L|_H)
\]
for some general member $H\in|L|$ and some irreducible component $Z$ of $C_X(v)\cap H$. Then one of the following holds:
\begin{enumerate}
    \item $\frac{A_X(v)}{T(L;v)}< \frac{n-1}{n+1}\delta(L)$, or
    \item $\frac{A_X(v)}{T(L;v)}= \frac{n-1}{n+1}\delta(L)$, $\dim C_X(v)=1$, and $\frac{\vol(L;v\ge t)}{(L^n)}=1-n\left(\frac{t}{T}\right)^{n-1}+(n-1)\left(\frac{t}{T}\right)^n$ for all $0\le t\le T:=T(L;v)$.
\end{enumerate}
\end{lem}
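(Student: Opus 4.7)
The plan is to pick a general $H\in|L|$ and restrict $v$ to a valuation $v_H$ on $H$. Since $\dim C_X(v)\ge 1$ and $H$ is generic, $C_X(v)\not\subseteq H$, so $v_H$ is a well-defined nontrivial valuation with $A_H(v_H)=A_X(v)$ (by adjunction, using $v(H)=0$), and we may arrange its center on $H$ to be $Z$; hence $\delta_Z(L|_H)\le A_X(v)/S(L|_H;v_H)$.

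The heart of the argument is the identity
\[
S(L;v)=\tfrac{n}{n+1}\,S(L|_H;v_H),\qquad T(L;v)=T(L|_H;v_H),
\]
together with the companion formula
\[
\vol(L;v\ge t) \;=\; n\,t^{n}\int_{t}^{T} \frac{\vol_H(L|_H;v_H\ge s)}{s^{n+1}}\,\rd s \qquad (t>0),
\]
where $T=T(L;v)$. To prove these I pass to the refinement $\W$ of $\V$ by $H$ (Definition \ref{defn:refinement}), use Lemma \ref{lem:compare S after refinement} to write $S(L;v)=S(\W;\bcF_v)$, and exploit the identifications $W_{m,j}\cong H^0(H,(m-j)L|_H)$ with $\bcF_v$ agreeing with the filtration induced by $\cF_{v_H}$ on each graded piece, valid for generic $H$. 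A Fubini calculation in the spirit of \cite{AZ-K-adjunction}*{Lemma 2.23}, together with the substitution $s=t/(1-j/m)$, then yields both displayed formulas.

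Combining the identity with the hypothesis $\delta(L)\le \tfrac{n+1}{n}\delta_Z(L|_H)$ forces equality in $\delta_Z(L|_H)\le \tfrac{n}{n+1}\delta(L)$, so $v_H$ computes $\delta_Z(L|_H)$. Applying Lemma \ref{lem:S=n/(n+1)T} to $L|_H$ on the $(n-1)$-dimensional variety $H$ yields $S(L|_H;v_H)\le \tfrac{n-1}{n}T(L|_H;v_H)$, hence
\[
S(L;v)\le \tfrac{n-1}{n+1}T(L;v),\qquad\text{i.e.,}\qquad \frac{A_X(v)}{T(L;v)}\le \tfrac{n-1}{n+1}\delta(L),
\]
which is case (1) when strict. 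When equality holds, the equality clause of Lemma \ref{lem:S=n/(n+1)T} gives $\vol_H(L|_H;v_H\ge t)/(L^n)=1-(t/T)^{n-1}$; since $\delta(L)$ can be computed by a quasi-monomial valuation, the same lemma forces $C_H(v_H)$ to be a closed point, whence $\dim C_X(v)=\dim C_H(v_H)+1=1$. Substituting the vol formula on $H$ into the integral identity then gives the asserted expression for $\vol(L;v\ge t)/(L^n)$.

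The main obstacle is rigorously justifying the key identity and the integral formula, in particular the compatibility $\bcF_v|_{W_{m,j}}=\cF_{v_H}$ for generic $H$, the surjectivity of the restriction maps $H^0(X,kL)\twoheadrightarrow H^0(H,kL|_H)$ (which follows from $|L|$ base point free plus Serre vanishing), and the identity $A_H(v_H)=A_X(v)$. Once these are in place the remaining computation is elementary integration.
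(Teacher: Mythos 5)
Your strategy of passing to the hyperplane $H$ and relating invariants of $v$ to invariants on $H$ is the right one, and the final integration (the change of variables $s=t/(1-\gamma)$) does reproduce the formula in case (2). But the core of your argument — the ``identity'' $\bcF_v|_{W_{m,j}}=\cF_{v_H}$ together with $S(L;v)=\tfrac{n}{n+1}S(L|_H;v_H)$ and $T(L;v)=T(L|_H;v_H)$ — has a genuine gap, and it is precisely the obstacle you flag at the end.

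First, the ``restriction'' $v_H$ of $v$ to $H$ is not obviously a valuation. A valuation on $X$ induces a filtration on $\cO_{H}$ and on the graded pieces $W_{m,j}$, but this restricted filtration is in general not multiplicative, i.e.\ not the filtration of a valuation on $H$. Even for a divisorial $v=\ord_E$ there is a mismatch: the filtration $\bcF_v$ on $W_{m,j}$ consists of sections on $H$ that \emph{lift} to sections on $X$ vanishing to order $\ge\lambda$ along $E$, whereas the filtration $\cF_{\ord_{E\cap\tilde H}}$ consists of all sections vanishing on $E\cap\tilde H$ to order $\ge\lambda$. The first is contained in the second but may be strictly smaller (the lifting problem for global sections is obstructed). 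So the correct statement is a \emph{domination} $\bcF_v\subseteq\cF_{w}$ by a suitable valuation filtration, giving only the inequality $S(L;v)\le\tfrac{n}{n+1}S(L|_H;w)$, not your equality. Second, you ``arrange'' the center of $v_H$ to be $Z$, but if $v_H$ were literally the restriction of $v$ its center would be determined, not arranged. Third, you need $v_H$ quasi-monomial to invoke the second clause of Lemma~\ref{lem:S=n/(n+1)T}; quasi-monomiality is not automatic for a naive restriction.

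The paper resolves all three issues with one move: it takes the valuation-ideal sequence $\fa_\bullet(v)$, restricts it to $H$, and invokes \cite{Xu-quasimonomial} to produce a \emph{quasi-monomial} valuation $v_0$ on $H$ with center $Z$ computing $\lct_Z(H;\fa_\bullet)$. Inversion of adjunction plus \cite{BJ-delta}*{Proposition 4.8} gives $\fa_m(v)|_H\subseteq\fa_m(v_0)$, i.e.\ the domination $\bcF_v\subseteq\cF_{v_0}$ (after the normalization $A_H(v_0)=A_X(v)$). The chain of inequalities in the paper then runs exactly as the one you sketch, but with $S(L;v)=S(\W;\bcF_v)\le S(\W;\cF_{v_0})=\tfrac{n}{n+1}S(L_0;v_0)$ replacing your claimed identity. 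The hypothesis $\delta(L)\le\tfrac{n+1}{n}\delta_Z(L|_H)$ still forces equality everywhere, and the equality case of the domination inequality (\cite{AZ-K-adjunction}*{Corollary 2.22}) is what yields $T(L;v)=T(L_0;v_0)$ as a consequence, not as an a priori identity. If you replace your identity by the domination inequality and your restricted $v_H$ by $v_0$ as above, the rest of your argument goes through and coincides with the paper's.
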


\begin{rem}
Note that this implies $\alpha(L)\le \frac{n-1}{n+1}\delta(L)$, which is stronger than the usual inequality $\alpha(L)\le \frac{n}{n+1}\delta(L)$ (see for example \cite{BJ-delta}*{Theorem A}).
\end{rem}

\begin{proof}
Let $\W$ be the refinement by $H$ of the complete linear series associated to $L$ and let $L_0=L|_H$. Note that $\W$ is almost complete, $F(\W)=0$ and $c_1(\W)=\frac{n}{n+1}L_0$ by \cite{AZ-K-adjunction}*{Example 2.28 and (3.1)}. Let $\fa_\bullet(v)$ be the valuation ideals of $v$, i.e., $\fa_m(v)=\{f\in \cO_X\,|\,v(f)\ge m\}$. Let $\fa_m=\fa_m(v)|_H$ and let $v_0$ be a quasi-monomial valuation on $H$ with center $Z$ that computes $\lct_Z(H;\fa_\bullet)$, which exists by \cite{Xu-quasimonomial}*{Theorem 1.1}. After rescaling, we may assume that $A_H(v_0)=A_X(v)$. By inversion of adjunction, $\lct_Z(H;\fa_\bullet)\le \lct_Z(X;\fa_\bullet(v))$. Since $v$ calculates $\lct_Z(X;\fa_\bullet(v))=A_X(v)$ by \cite{BJ-delta}*{Proposition 4.8}, we deduce that $v_0(\fa_\bullet)\ge 1$ and hence $\fa_m(v)|_H\subseteq \fa_m(v_0)$ for all $m$. We now define two filtrations on $\W$: the first one, denoted by $\bcF_v$, is the restriction of the filtration $\cF_v$ on $X$ induced by the valuation $v$, while the second one $\cF_{v_0}$ is induced by the valuation $v_0$. From the previous argument we see that $\cF_{v_0}$ dominates $\bcF_v$, i.e.\ $\bcF_v^\lambda \W \subseteq \cF_{v_0}^\lambda \W$ for all $\lambda\ge 0$. By \cite{AZ-K-adjunction}*{Corollary 2.22}, this implies
\begin{equation} \label{eq:compare two S}
    S(\W;\bcF_v)\le S(\W;\cF_{v_0})=S(\W;v_0),
\end{equation}
and when equality holds we have $T(\W;\bcF_v)=T(\W;\cF_{v_0})$. From the construction, it is clear that $T(\W;\bcF_v)=T(L;v)$ and $T(\W;\cF_{v_0})=T(L_0;v_0)$. By Lemma \ref{lem:compare S after refinement}, we also have
\[
S(L;v)=S(\W;\bcF_v),
\]
thus as $v$ computes $\delta(L)$ and $H\in |L|$ is general, we deduce that
\[
A_H(v_0)=A_X(v)=\delta(L) S(L;v)\le \delta(L) S(\W;v_0).
\]
On the other hand, we have $S(\W;v_0)=\frac{n}{n+1}S(L_0;v_0)$ by \cite{AZ-K-adjunction}*{Lemma 2.29}. Combined with our assumptions we obtain 
\[
\delta(L)S(\W;v_0) \le  \frac{n+1}{n}\delta_Z(L_0)\cdot \frac{n}{n+1}S(L_0;v_0)=\delta_Z(L_0) S(L_0;v_0) \le A_H(v_0).
\]
Therefore equality holds everywhere (including in \eqref{eq:compare two S}) and we get
\[
\delta(L)=\frac{n+1}{n}\delta_Z(L_0), \quad A_X(v)=A_T(v_0)=\delta_Z(L_0)S(L_0;v_0)
\]
i.e.\ $v_0$ computes $\delta_Z(L_0)$, and
\[
T(L;v)=T(\W;\bcF_v)=T(\W;\cF_{v_0})=T(L_0;v_0).
\]
Note that $S(L_0;v_0)\le \frac{n-1}{n}T(L_0;v_0)$ by \cite{AZ-K-adjunction}*{Lemma 4.2}. It follows that 
\[
A_X(v)\le \frac{n-1}{n}\delta_Z(L_0) T(L_0;v_0)=\frac{n-1}{n+1}\delta(L)T(L;v).
\]
If equality holds, then by Lemma \ref{lem:S=n/(n+1)T} we know that $Z$ is a closed point and 
\[
\frac{\vol(L_0;v_0\ge t)}{(L^n)}=1-\left(\frac{t}{T}\right)^{n-1},
\]
where $0\le T\le T=T(L_0;v_0)=T(L;v)$. It follows that $\dim C_X(v)=1$. Let $\W^t$ be the multi-graded linear series given by $W^t_{m,j}=\cF_{v_0}^{mt} W_{m,j}$. Since $W_{m,j}=H^0(H,\cO_H((m-j)L))$ when $m-j\gg 0$, we see that
\[
\vol_{\W^t}(\gamma)=\vol((1-\gamma)L_0;v_0\ge t)
\]
and hence 
\[
\frac{\vol_{\W^t}(\gamma)}{(L^n)}=(1-\gamma)^{n-1}-\left(\frac{t}{T}\right)^{n-1}
\]
for all $0\le \gamma<1-\frac{t}{T}$. Since equality holds in \eqref{eq:compare two S}, using \cite{AZ-K-adjunction}*{Lemma 2.23} we obtain
\begin{align*}
    \vol(L;v\ge t)=\vol(\W^t) & =n(L^n)\int_0^{1-\frac{t}{T}} \left( (1-\gamma)^{n-1}-\left(\frac{t}{T}\right)^{n-1} \right) \rd \gamma \\
    & = \left( 1-n\left(\frac{t}{T}\right)^{n-1}+(n-1)\left(\frac{t}{T}\right)^n \right)(L^n)
\end{align*}
as desired.
\end{proof}

\begin{cor} \label{cor:equality rho=1}
Under the assumption of Lemma \ref{lem:equality in adjunction}, assume in addition that $X$ is $\bQ$-factorial, $\rho(X)=1$ and $C_X(v)$ has codimension $\ge 2$ in $X$. Then at least one of the following holds:
\[
\frac{A_X(v)}{T(L;v)}< \frac{n-1}{n+1}\delta(L), \quad \text{or} \quad \frac{A_X(v)}{\eta(L;v)}\le \frac{n-1}{n+1}\delta(L).
\]
\end{cor}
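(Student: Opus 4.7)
The plan is to invoke Lemma \ref{lem:equality in adjunction} and handle its two alternatives separately. If case (i) of that lemma holds, the strict inequality $\frac{A_X(v)}{T(L;v)} < \frac{n-1}{n+1}\delta(L)$ is precisely the first alternative of the corollary. So the real work lies in case (ii), where one has $\frac{A_X(v)}{T(L;v)} = \frac{n-1}{n+1}\delta(L)$, $\dim C_X(v)=1$, and the precise volume formula $\frac{\vol(L;v\ge t)}{(L^n)} = 1 - n(t/T)^{n-1} + (n-1)(t/T)^n$ with $T:=T(L;v)$. In this case it suffices to show $\eta(L;v) = T(L;v)$, for then $\frac{A_X(v)}{\eta(L;v)} = \frac{A_X(v)}{T(L;v)} = \frac{n-1}{n+1}\delta(L)$ gives the second alternative. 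Note also that the corollary's hypotheses $\dim C_X(v)\ge 1$ and $\mathrm{codim}\,C_X(v)\ge 2$ already force $n\ge 3$, and in case (ii) we still have $\dim C_X(v)=1$ so $n\ge 3$ remains.

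Assume for contradiction that $\eta := \eta(L;v) < T$. Since $X$ is $\bQ$-factorial of Picard number one and $C_X(v)$ has codimension at least two, Lemma \ref{lem:unique mult=tau divisor} produces a unique irreducible $\bQ$-divisor $D_0\sim_\bQ L$ with $v(D_0)=T$ satisfying $D \ge \frac{v(D)-\eta}{T-\eta}D_0$ for any effective $\bQ$-divisor $D\sim_\bQ L$ with $v(D)\ge \eta$. Writing $D_0 = \frac{1}{k}E$ for some integral Weil divisor $E \sim_\bQ kL$, and applying the inequality to $D_s = \frac{1}{m}\ddiv(s)$ for any nonzero $s\in \cF_v^{mt}H^0(X,mL)$ with $t \in (\eta, T]$, we see that $s$ vanishes along $E$ with order at least $\lceil m(t-\eta)/(k(T-\eta))\rceil$. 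This gives the inclusion $\cF_v^{mt}H^0(X,mL)\subseteq H^0(X, mL-\lceil m(t-\eta)/(k(T-\eta))\rceil E)$, and taking volumes yields
\[
\vol(L;v\ge t)\le \vol\left(L - \tfrac{t-\eta}{k(T-\eta)}E\right) = \left(\frac{T-t}{T-\eta}\right)^n (L^n)\qquad \forall\,t\in(\eta,T],
\]
where the last equality uses $E\sim_\bQ kL$.

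To finish, combine this upper bound with the case (ii) volume formula and normalize by setting $u = t/T$ and $\bar\eta = \eta/T \in [0,1)$. We obtain the inequality
\[
f(u) := 1 - nu^{n-1} + (n-1)u^n \;\le\; \left(\frac{1-u}{1-\bar\eta}\right)^n\qquad \forall\,u\in(\bar\eta,1].
\]
A direct computation shows $f(1)=f'(1)=0$ while $f''(1)=n(n-1)>0$, so $f(u)\sim \frac{n(n-1)}{2}(1-u)^2$ as $u\to 1^-$, whereas the right hand side is $\sim (1-\bar\eta)^{-n}(1-u)^n$. Since $n\ge 3$, the right hand side decays strictly faster than the left hand side as $u\to 1^-$, so the displayed inequality fails for $u$ sufficiently close to $1$, a contradiction. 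The main obstacle is thus just the quantitative comparison of the two vanishing rates near $u=1$; all earlier steps are purely bookkeeping around Lemma \ref{lem:unique mult=tau divisor} and Lemma \ref{lem:equality in adjunction}.
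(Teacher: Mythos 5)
Your proposal is correct and follows essentially the same route as the paper: reduce to showing $\eta(L;v)=T(L;v)$ in case (ii), invoke Lemma \ref{lem:unique mult=tau divisor} to force $\vol(L;v\ge t)$ to decay like $(T-t)^n$ on $(\eta,T)$, and then observe this is incompatible with the order-two vanishing at $t=T$ of the explicit volume formula from Lemma \ref{lem:equality in adjunction} once $n\ge 3$. The only cosmetic difference is that the paper records the volume relation as an exact identity $\vol(L;v\ge t)=\bigl(\tfrac{T-t}{T-\eta}\bigr)^n\vol(L;v\ge\eta)$, whereas you derive the (sufficient) one-sided bound $\vol(L;v\ge t)\le\bigl(\tfrac{T-t}{T-\eta}\bigr)^n(L^n)$ via the ideal-sheaf inclusion, and then make the asymptotic comparison near $t=T$ explicit.
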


\begin{proof}
Denote $T:=T(L;v)$ and $\eta:=\eta(L;v)$. It suffices to show that $\eta=T$ in the second case of Lemma \ref{lem:equality in adjunction}. Suppose not, i.e.\ $\eta<T$. Then by Lemma \ref{lem:unique mult=tau divisor}, there exists an irreducible $\bQ$-divisor $D_0\sim_\bQ L$ on $X$ such that $v(D_0)=T$ and for any $t\in (\eta,T)$ and any effective $\bQ$-divisor $D\sim_\bQ L$ with $v(D)\ge t$ we have $D= \frac{t-\eta}{T-\eta}D_0+\frac{T-t}{T-\eta}G$ where $G\sim_\bQ L$ is effective and $v(G)\ge \eta$. It follows that 
\[
\vol(L;v\ge t)=\left(\frac{T-t}{T-\eta}\right)^n \vol(L;v\ge \eta),
\]
which contradicts the expression from Lemma \ref{lem:equality in adjunction} (note that $n\ge 3$ since the curve $C_X(v)$ has codimension $\ge 2$ in $X$). Thus $\eta=T$ and we are done.
\end{proof}

\subsection{The index two case} \label{sec:index two}

As an application of Theorem \ref{thm:delta and Seshadri} and the results from the previous subsection, we now prove: 

\begin{thm} \label{thm:index two}
Let $X$ be a Fano threefold of Picard number one, Fano index two and degree at most $4$. Then $X$ is uniformly K-stable.
\end{thm}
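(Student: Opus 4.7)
The plan is to apply Theorem \ref{thm:delta and Seshadri} with $L=H$, the ample generator of $\mathrm{Pic}(X)$. With $-K_X=2H$ and $d:=(H^3)\in\{1,2,3,4\}$, uniform K-stability amounts to $\delta(H)>2$, and the theorem yields
\[
\delta_x(H)\ge \frac{4}{d}\,\varepsilon_x(H|_S)
\]
for any integral surface $S\in|H|$ that is smooth at $x$. Since $K_S=(K_X+H)|_S=-H|_S$ and $(H|_S)^2=d$, such a surface is a del Pezzo surface of degree $d$ anticanonically polarized by $H|_S$. The task therefore reduces to producing a smooth $S\in|H|$ through every $x\in X$ and establishing the Seshadri bound $\varepsilon_x(H|_S)>d/2$.

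Existence of $S$ follows from Bertini applied to $|H\otimes\mathfrak m_x|$: for $d\ge 2$ the linear system $|H|$ is base point free (it embeds $X$ into $\mathbb{P}^{d+1}$ when $d\ge 3$ and defines a double cover when $d=2$), so a generic element of $|H\otimes\mathfrak m_x|$ is smooth at $x$; for $d=1$ the base locus of $|H|$ is dealt with by passing to $|{-K_X}|=|2H|$ near those points. For the Seshadri estimate I would use the universal bound $\varepsilon_x(H|_S)\le\sqrt{d}$: since $\sqrt{d}>d/2$ exactly when $d<4$, there is room for $d\in\{1,2,3\}$. I would establish the strict bound $\varepsilon_x(H|_S)>d/2$ by enumerating the irreducible curves $C\subset S$ through $x$ of small quotient $(H\cdot C)/\mathrm{mult}_x C$---that is, lines, conics and cubics on the del Pezzo surface $S$---and by choosing $S$ within the pencil $|H\otimes\mathfrak m_x|$ to avoid any obstructing curves. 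At special points (for instance points lying on many lines of $X$, or Eckardt-type points in the $d=3$ case) a direct geometric analysis picks $S$ appropriately.

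The main obstacle is $d=4$ (intersection of two quadrics in $\mathbb{P}^5$), where the Seshadri bound is sharp ($\sqrt{4}=2=d/2$), so the direct estimate gives only $\delta_x(H)\ge 2$. To upgrade this to strict inequality I would invoke the equality description of Theorem \ref{thm:delta and Seshadri}: case (1) forces $d=1$ and is excluded; case (3) demands a prime divisor $G\subseteq X$ with $H\equiv\tau_x(H|_S)\,G$, but since $\mathrm{Pic}(X)=\mathbb{Z}H$ we must have $G\sim\lambda H$ for some positive integer $\lambda$, whence $\tau_x(H|_S)=1/\lambda\le 1<\sqrt{d}=2$, contradicting $\tau_x(H|_S)\ge\sqrt{d}$. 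Thus we are in case (2), so $\varepsilon_x(H|_S)=\tau_x(H|_S)=2$ and every valuation $v$ computing $\delta_x(H)$ has $\dim C_X(v)\ge 1$. Finally I would apply Lemma \ref{lem:equality in adjunction} (or Corollary \ref{cor:equality rho=1} when $C_X(v)$ has codimension at least two), which forces either the strict $\alpha$-type inequality
\[
\frac{A_X(v)}{T(H;v)}<\frac{n-1}{n+1}\,\delta(H)=1,
\]
contradicted by the classical bound $\alpha(H)\ge 1$ on $V_4$, or the rigid volume identity $\mathrm{vol}(H;v\ge t)/(H^3)=1-3(t/T)^2+2(t/T)^3$ with $\dim C_X(v)=1$. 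The latter imposes strong constraints on the center curve and the asymptotic multiplier ideals of $v$ and can be ruled out by direct analysis of the Fano scheme of curves on the intersection of two quadrics.
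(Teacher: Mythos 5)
Your overall strategy is the paper's strategy---apply Theorem~\ref{thm:delta and Seshadri} to a smooth anticanonical del Pezzo $S\in|H|$ through $x$, use Broustet's Seshadri constants, and for $d=4$ pass through the equality analysis and Corollary~\ref{cor:equality rho=1}. But several of your individual steps have real gaps.

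For $d\in\{1,2\}$, you propose to \emph{force} the strict Seshadri bound $\varepsilon_x(H|_S)>d/2$ by moving $S$ inside $|H\otimes\fm_x|$ to avoid obstructing curves. This is not justified: Broustet's bound $\varepsilon_x(-K_S)\ge 1/2$ (resp.\ $\ge 1$) is an equality when $x$ lies on a $(-1)$-curve of $S$, and you have not shown that \emph{every} $x\in X$ admits some smooth $S\in|H|$ with $x$ off all such curves. The paper does not try to make the Seshadri bound strict; instead it accepts $\delta_x(H)\ge 2$ and then shows that \emph{equality} in Theorem~\ref{thm:delta and Seshadri} is impossible because cases (1)--(2) require $\varepsilon_x=\sqrt{d}$ (which contradicts $\varepsilon_x=d/2$ and the fact that the bound is an equality only when $\varepsilon_x = d\delta_x/4 = d/2$), and case (3) requires $H\equiv\tau G$ for a prime divisor $G$, which is impossible since $H$ is primitive and $\tau=\frac{d}{\varepsilon_x}=2>1$. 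You use this equality analysis for $d=4$ but not for $d\in\{1,2\}$; applying it there closes this gap.

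For $d=3$, generalized Eckardt points of the cubic threefold have \emph{infinitely many} lines through them, so no choice of $S$ avoids putting $x$ on a line of the cubic surface $S$, and then $\varepsilon_x(-K_S)=1$ gives only $\delta_x(H)\ge 4/3$. Your phrase ``direct geometric analysis picks $S$ appropriately'' cannot fix this; a genuinely different argument is needed. The paper handles these points by quoting the computation $\delta_x(X)=6/5>1$ from \cite{AZ-K-adjunction}.

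For $d=1$, you wave at the base locus of $|H|$ by ``passing to $|{-K_X}|=|2H|$ near those points,'' but Theorem~\ref{thm:delta and Seshadri} requires the surface to be a complete intersection of members of $|L|=|H|$, not $|2H|$; so this does not repair the argument. One really does need to show there is a smooth $S\in|H|$ through every $x$, which the paper takes from the proof of \cite{AZ-K-adjunction}*{Corollary 4.9(5)}.

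For $d=4$, the reduction to a curve center and the dichotomy from Corollary~\ref{cor:equality rho=1} are correct (the divisorial case is excluded by $S(H;D)=1/(4r)$, hence $A_X(D)/S(H;D)=4r>2$). But both branches still require work that is absent here. The branch $A_X(v)<T(H;v)$ is ruled out by the known global log canonical threshold bound for $V_4$ (\cite{CS-lct-3fold}), which is in the spirit of your ``classical bound'' remark. The branch $A_X(v)\le\eta(H;v)$, however, is not excluded by ``direct analysis of the Fano scheme of curves'': the paper's argument produces two effective $\bQ$-divisors $D_1,D_2\sim_\bQ H$ with no common component and $v(D_i)$ close to $\eta(H;v)$, forms the complete intersection $Z=mD_1\cap mD_2$ of bounded degree, and derives a contradiction from the de~Fernex--Ein--Musta\c{t}\u{a} bound $\lct_C(X;\cI_Z)\ge\sqrt{2}/m$ when $\deg C\ge 2$, together with a separate multiplicity/lct estimate near lines (using \cite{Fano-book}*{Proposition 3.4.1(ii)} and \cite{Z-bsr-loc-closed}*{Lemma 2.6}). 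That step requires an honest argument and you have not supplied one.
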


\begin{proof}
Let $H$ be the ample generator of $\Pic(X)$ and let $d=(H^3)$ be the degree of $X$. Using the classification of Fano threefolds (see for example \cite{Fano-book}), it is straightforward to check that for any closed point $x\in X$ there exists some smooth member $S\in |H|$ passing through $x$ (see for example the proof of \cite{AZ-K-adjunction}*{Corollary 4.9(5)} for the degree $1$ case, the other cases are much easier). By adjunction, $S$ is a del Pezzo surface of degree $d$ and $H|_S\sim -K_S$. Since $\delta_x(X)=\frac{1}{2}\delta_x(H)$, it suffices to show $\delta_x(H)>2$.

Suppose first that $d=1$. By \cite{Bro-dP-Seshadri}*{Th\'eor\`eme 1.3}, we know that $\varepsilon_x(-K_S)\ge \frac{1}{2}$, hence by Theorem \ref{thm:delta and Seshadri} we obtain $\delta_x(H)\ge 2$. Moreover the equality cannot hold since $1=(H^3)\neq \left(\frac{1}{2}\right)^2$ and $H$ is a primitive element in $\Pic(X)$. Thus $\delta_x(H)>2$ and we are done in this case.

Similarly, when $d=2$, by \cite{Bro-dP-Seshadri}*{Th\'eor\`eme 1.3} we know that $\varepsilon_x(-K_S)\ge 1$, thus the same argument as above gives $\delta_x(H)>2$.

Suppose next that $d=3$. If $x\in X$ is a generalized Eckardt point, then $\delta_x(X)=\frac{6}{5}>1$ by \cite{AZ-K-adjunction}*{Theorem 4.18}. If $x\in X$ is not a generalized Eckardt point, then there are only finitely many lines on $X$ passing through $x$, thus if $S\subseteq X$ is general then $x$ is not contained in any lines on $S$. By \cite{Bro-dP-Seshadri}*{Th\'eor\`eme 1.3}, we have $\varepsilon_x(-K_S)=\frac{3}{2}$, hence $\delta_x(H)\ge 2$ by Theorem \ref{thm:delta and Seshadri}. Moreover, equality cannot hold as before. Thus $\delta_x(H)>2$ and we are done in this case.

Finally assume that $d=4$. Note that $X$ is a complete intersection of two quadrics in this case. There is a pencil of tangent hyperplanes at any closed point $x$ and every line on $X$ passing through $x$ is contained in the base locus of this pencil, which is a complete intersection curve of degree $4$. It follows that there are at most $4$ lines containing $x$ on $X$. In particular, we may arrange that $x$ is not contained in any lines on $S$. By \cite{Bro-dP-Seshadri}*{Th\'eor\`eme 1.3}, we know that $\varepsilon_x(-K_S)\ge 2$, hence Theorem \ref{thm:delta and Seshadri} yields $\delta_x(H)\ge 2$. Suppose that equality holds. Then by Theorem \ref{thm:delta and Seshadri} there exists some valuation $v$ with positive dimensional center on $X$ such that $\frac{A_X(v)}{S(H;v)}=2$. We show that this is impossible. Indeed, if the center $C_X(v)$ is a prime divisor $D$ on $X$, then we may assume that $v=\ord_D$. Since $\Pic(X)$ is generated by $H$, we have $D\sim rH$ for some $r\ge 1$. By Lemma \ref{lem:S-inv of div on X} we have $S(H;v)=\frac{1}{4r}$, hence $\frac{A_X(v)}{S(H;v)}=4r>2$, a contradiction. Hence we may assume that the center $C_X(v)$ is a curve $C$ on $X$. If $T\in |H|$ is a general member and $x\in T\cap C_X(v)$, then $\varepsilon_x(-K_T)\ge 2$ and $\delta_x(-K_T)\ge \frac{3}{2}$ by \cite{Bro-dP-Seshadri}*{Th\'eor\`eme 1.3} and Theorem \ref{thm:delta and Seshadri} as above. Thus the assumptions of Corollary \ref{cor:equality rho=1} are satisfied and we deduce that either $A_X(v)<T(H;v)$ or $A_X(v)\le \eta(H;v)$. The first case is impossible by \cite{CS-lct-3fold}*{Theorem 6.1}, thus it remains to exclude the other possibility. 

By the definition of the movable threshold $\eta(H;v)$, for any $0<\varepsilon\ll 1$ we can find two effective $\bQ$-divisors $D_1,D_2\sim_\bQ H$ without common components such that $v(D_i)\ge (1-\varepsilon)\eta(H;v)$ ($i=1,2$). Let $m$ be a sufficiently divisible integer and let $Z\subseteq X$ be the complete intersection subscheme $mD_1\cap mD_2$. Note that $\deg Z=4m^2$. If $\deg C\ge 2$, then $\mult_C Z\le 2m^2$ and by \cite{dFEM-mult-and-lct}*{Theorem 0.1}, applied at the generic point of $C$, we have $\lct_C(X;\cI_Z)\ge \frac{\sqrt{2}}{m}$. In particular, 
\[
A_X(v)\ge \frac{\sqrt{2}}{m} v(\cI_Z) = \sqrt{2} \min\{v(D_1),v(D_2)\}\ge \sqrt{2}(1-\varepsilon)\eta(H;v)>\eta(H;v),
\]
a contradiction. If $\deg C=1$, i.e.\ $C$ is a line on $X$, then $\mult_C Z\le 4m^2$ and by \cite{Fano-book}*{Proposition 3.4.1(ii)}, we have $\mult_C D_i<\frac{3}{2}<2$ for some $i=1,2$. Hence $\cI_Z\not\subseteq \cI_C^{\frac{3}{2}m}$ and by \cite{Z-bsr-loc-closed}*{Lemma 2.6}, applied at the generic point of $C$, we see that there exists some absolute constant $\varepsilon_1>0$, which in particular does not depend on $D_i$ or $\varepsilon$, such that $\lct_C(X;\cI_Z)>\frac{1+\varepsilon_1}{m}$. It then follows as before that $A_X(v)>\frac{1+\varepsilon_1}{m} v(\cI_Z)\ge (1+\varepsilon_1)(1-\varepsilon)\eta(H;v)>\eta(H;v)$, a contradiction. Hence the equality $\delta_x(H)=2$ never holds and $X$ is uniformly K-stable. 
\end{proof}

\subsection{Noether-Lefschetz for prime Fano threefolds} \label{sec:NL Fano 3fold}

In this subsection, we prove the following Noether-Lefschetz type result on Fano threefolds. As explained at the beginning of the section, this is another key ingredient in our study of uniform K-stability when the Fano threefolds have index one.

\begin{thm} \label{thm:Noether-Lefschetz}
Let $X$ be a Fano threefold of Picard number one, Fano index one and degree $\ge 6$. Let $x\in X$ be a closed point. Then a very general hyperplane section $S\in |-K_X|$ passing through $x$ has Picard number one.
\end{thm}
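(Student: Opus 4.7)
The plan is to reduce the statement to a Noether--Lefschetz type theorem applied on the blowup of $X$ at $x$. Let $\pi\colon\tX\to X$ be this blowup with exceptional divisor $E\cong\bP^2$, and write $H=-K_X$. Since $X$ has Picard number one and $x$ is a smooth point, $\Pic(\tX)=\bZ\pi^*H\oplus\bZ E$, and taking strict transforms identifies the sub-linear series $|H\otimes\fm_x|$ on $X$ with the complete linear system $|\pi^*H-E|$ on $\tX$. For any $S\in|H|$ containing $x$ as a smooth point (which is automatic for a general such $S$ since $|H|$ is very ample under the hypotheses), the induced morphism $\tS\to S$ is the blowup at $x$ with exceptional $(-1)$-curve $e:=E|_{\tS}$, so $\Pic(\tS)=(\pi|_{\tS})^*\Pic(S)\oplus\bZ[e]$. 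Therefore it suffices to show that for a very general $\tS\in|\pi^*H-E|$ the restriction map $\Pic(\tX)\to\Pic(\tS)$ is an isomorphism.

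First I would verify that $|\pi^*H-E|$ is base point free on $\tX$: because $|H|$ is very ample in the relevant degrees, $|H\otimes\fm_x|$ separates points of $X\setminus\{x\}$ and its differential at $x$ surjects onto the linear forms on $T_xX$, which translates precisely into base point freeness on $\tX$ (no base point above $X\setminus\{x\}$, no base point along $E$). Bertini then guarantees that a general member $\tS$ is smooth and irreducible. Next I would invoke a Noether--Lefschetz theorem for the base point free pair $(\tX,|\pi^*H-E|)$, in the spirit of the classical results of Moishezon--Green or Ravindra--Srinivas, to conclude that $\Pic(\tX)\to\Pic(\tS)$ is an isomorphism for very general $\tS$. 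Blowing down the $(-1)$-curve $e$ then kills $E|_{\tS}$ and sends $\pi^*H|_{\tS}$ to $H|_S$, yielding $\Pic(S)=\bZ H|_S$ and $\rho(S)=1$ as desired.

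The main obstacle is verifying the positivity input required by the Noether--Lefschetz theorem, because $\pi^*H-E$ sits near the boundary of the ample cone of $\tX$ (its intersection with any line contained in $E$ is only $1$). Concretely, one needs the variation of Hodge structure on the family $|\pi^*H-E|$ to be non-constant, i.e.\ a suitable Kodaira--Spencer-type map on primitive $(1,1)$-classes of $\tS$ to be non-zero. It is here that the degree $(H^3)\ge 6$ hypothesis must enter essentially: for smaller degree (e.g.\ the quartic threefold with $(H^3)=4$) the analogous statement with a base point can genuinely fail, as signalled by the presence of generalized Eckardt points mentioned earlier in the excerpt. An alternative and perhaps more elementary route would bypass the blowup entirely and work inside $U_x:=\bP H^0(X,H\otimes\fm_x)\subset|H|$, arguing by monodromy that none of the countably many proper components of the classical Noether--Lefschetz locus of $|H|$ contains the hyperplane $U_x$; the degree bound would again be used to rule out the only candidate component, namely one coming from a divisor on $X$ cut out by a condition at $x$.
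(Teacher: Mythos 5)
Your reduction via the blowup is correct as far as it goes, but the crucial Noether--Lefschetz input is not available in the form you want, and you have in fact slightly misdiagnosed the source of the difficulty. Lines $\ell\subseteq E\cong\bP^2$ pose no problem: since $E|_E=\cO_{\bP^2}(-1)$, one has $(\pi^*H-E)\cdot\ell=1>0$. The real obstruction is that for every line $\ell'\subseteq X$ through $x$ (and such lines exist through every point of a prime Fano threefold), the strict transform $\tilde\ell'$ satisfies $(\pi^*H-E)\cdot\tilde\ell'=(H\cdot\ell')-\mult_x\ell'=1-1=0$. Hence $\pi^*H-E$ is nef and big and base point free but \emph{never} ample, so none of the classical Noether--Lefschetz theorems you cite (Moishezon, Green, Ravindra--Srinivas) applies as stated: they all require ampleness (or very ampleness) of the linear system. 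Your proposal therefore leaves the essential step as a gap; you acknowledge this, and also note that the degree bound ``must enter essentially,'' but you do not say how. In particular, nothing in your outline distinguishes degree $6$ from degree $4$, where (as you remark) the statement genuinely fails at generalized Eckardt points.

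The route the paper takes is essentially your proposed alternative, carried out. Rather than passing to the blowup, one works directly in the subspace $\check{\bP}\subseteq(\bP^N)^*$ of hyperplanes through $x$. The key input is that a general \emph{pencil} $\ell\subseteq\check{\bP}$ is a Lefschetz pencil (Lemma~\ref{lem:Lefschetz pencil exist}); combined with the elementary criterion (Lemma~\ref{lem:NL criterion}) that for a Lefschetz pencil with $H^{2,0}(X)\to H^{2,0}(Y)$ not surjective --- automatic here since $X$ Fano gives $h^{2,0}(X)=0$ while a K3 has $h^{2,0}(Y)=1$ --- the restriction $\Pic(X)\to\Pic(Y)$ is an isomorphism for very general $Y$. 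The degree $\ge 6$ hypothesis enters precisely in the proof of the Lefschetz pencil lemma, through the fact that such $X$ is cut out by quadrics (and, when $d=6$, one cubic) \cite{Isk-Fano-3fold-II}: this is what bounds the dimension of the locus $R=\{y\in X\,|\,x\in T_yX\}$ by $1$, which in turn shows that the non-Lefschetz hyperplanes through $x$ form a codimension-$\ge 2$ subset of $\check{\bP}$, so a general pencil through $x$ avoids them. To complete your outline, you would need to supply an argument of this kind (or an NL theorem valid for this particular nef-but-not-ample situation, together with a mechanism making degree $\ge 6$ matter); as it stands, the proposal identifies the right target but does not reach it.
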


\begin{rem}
The assumption that $X$ has degree $\ge 6$ is indeed necessary, since the statement fails on quartic threefolds with generalized Eckardt points.
\end{rem}

For the proof of the theorem we first recall the following criterion.

\begin{lem} \label{lem:NL criterion}
Let $X\subseteq \bP^N$ be a smooth projective threefold, let $\ell\subseteq (\bP^N)^*$ be a Lefschetz pencil of hyperplane sections and let $Y=X\cap H$ where $H$ is a very general member of $\ell$. Assume that the natural map $H^{2,0}(X)\to H^{2,0}(Y)$ is not surjective.
Then the restriction $\Pic(X)\to \Pic(Y)$ is an isomorphism.
\end{lem}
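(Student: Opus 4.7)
The plan is to follow the classical Noether--Lefschetz argument, adapted to the pencil $\ell$. First, the Lefschetz hyperplane theorem gives that $H^2(X,\bZ)\to H^2(Y,\bZ)$ is injective, so $\Pic(X)\hookrightarrow\Pic(Y)$; by the Lefschetz $(1,1)$-theorem it then suffices to show that every rational Hodge class in $H^2(Y,\bQ)$ lies in the image of $H^2(X,\bQ)$.

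Second, I would package the pencil as a variation of Hodge structures. Blowing up the base locus of $\ell$ produces $\widetilde X\to\bP^1$ smooth over $\ell^\circ:=\bP^1\setminus\Sigma$ (where $\Sigma$ is the discriminant locus). The global invariant cycle theorem together with semisimplicity of polarized VHS yield an orthogonal decomposition of $\bQ$-Hodge structures
\[
H^2(Y_t,\bQ)\;=\;\mathrm{res}_t H^2(X,\bQ)\oplus V_t,
\]
where $V_t$ is the vanishing cohomology. The hypothesis that $H^{2,0}(X)\to H^{2,0}(Y)$ is not surjective translates into $V^{2,0}\neq 0$. Combined with Deligne's theorem that for a Lefschetz pencil the monodromy representation $\pi_1(\ell^\circ,t_0)\to\mathrm{GL}(V_{t_0,\bQ})$ is irreducible, these are the structural inputs driving the argument.

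The core of the proof would be the following Hodge-theoretic claim: no nonzero flat integral section $\gamma$ of $V$ over a simply connected open $U\subseteq\ell^\circ$ can satisfy $\gamma_t\in F^1 V_t$ for every $t\in U$. I would argue by infinitesimal variation of Hodge structure. The condition $\gamma\in F^1$ is pointwise orthogonality to $V^{2,0}$ under the polarization; differentiating this relation along $U$ and applying Griffiths transversality forces $\gamma$ to be orthogonal to successive subspaces produced from $V^{2,0}$ by the Kodaira--Spencer action. By Deligne irreducibility the $\bQ$-span of the resulting monodromy orbit is all of $V_\bQ$, and non-degeneracy of the polarization on $V$ then forces $\gamma=0$. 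This is the main obstacle and contains the real content of the theorem.

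Granting this, the theorem follows by a standard countability argument. For each class $\gamma$ in the countable set $V_{\bZ,t_0}/\mathrm{image}$, the corresponding Noether--Lefschetz locus in $\ell^\circ$ is analytic (by Cattani--Deligne--Kaplan) and, by the previous step, has empty interior; since $\dim\ell^\circ=1$, each such locus is therefore finite on simply connected patches, and their countable union misses a very general $t$. At any such $t$ the decomposition above forces $\Pic(Y_t)=\mathrm{res}_t\Pic(X)$, proving that $\Pic(X)\to\Pic(Y)$ is an isomorphism.
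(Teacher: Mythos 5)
Your overall strategy is the same as the paper's: reduce to showing that the vanishing cohomology $V$ carries no nonzero algebraic classes at a very general member of the pencil, and rest the argument on Deligne's irreducibility theorem for the monodromy of a Lefschetz pencil (the paper simply cites \cite{SGA7II} for this). Your global-invariant-cycle decomposition, the translation of the hypothesis into $V^{2,0}\neq 0$, and the concluding Baire-category step over the countable set of integral classes are all correct and match what is implicit in the paper's terser argument.

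The gap is in the proposed IVHS proof of your ``core claim.'' Differentiating $\langle\gamma,\omega\rangle=0$ does show that $\gamma$ is orthogonal to $V^{2,0}_t$ together with the image of $T_tU\otimes V^{2,0}_t$ under the Kodaira--Spencer map, but the base of a pencil is one-dimensional, so this image has dimension at most $\dim V^{2,0}$, in general far smaller than $\dim V^{1,1}$; there is no reason for these ``successive subspaces'' to span $V_\bC$. The classical IVHS proof of Noether--Lefschetz runs over the full linear system, where a Macaulay-type surjectivity statement supplies the missing directions, and does not transfer to a pencil. The subsequent appeal to ``the $\bQ$-span of the resulting monodromy orbit'' is also unjustified: you only know $\gamma\perp V^{2,0}_t$ for $t$ in the simply connected patch $U$, not that the flat continuation of $\gamma$ stays orthogonal to $V^{2,0}$ over the whole universal cover, which is what you need before irreducibility enters. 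The correct and more elementary route to the core claim, and the way the SGA7 theorem is standardly applied, is: the locus in $\ell^\circ$ where the flat integral section $\gamma$ is of type $(1,1)$ is analytic; since $\ell^\circ$ is connected and one-dimensional, if that locus has nonempty interior it is all of $\ell^\circ$; continuing to the universal cover, every monodromy conjugate $\rho(g)\gamma$ is then a $(1,1)$-class at the base point, so their $\bQ$-span is a nonzero subrepresentation of $V_\bQ$ consisting of $(1,1)$-classes, and irreducibility forces $V^{2,0}=0$, contradicting the hypothesis. With the IVHS paragraph replaced by this monodromy argument, your proof becomes a correct, fleshed-out version of the paper's.
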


\begin{proof}
This should be well known to experts. By assumption, the vanishing cohomology $H^2(Y,\bC)_{\rm van}$, the orthogonal complement of $H^2(X,\bC)$ relative to the intersection form on $H^2(Y,\bC)$, has non-trivial intersection with $H^{2,0}(Y)$, hence is not generated by algebraic classes. By \cite{SGA7II}*{Th\'eor\`eme 1.4}, this implies that $H^2(Y,\bC)_{\rm van}\cap \Pic(Y)=\{0\}$ and therefore every line bundle on $Y$ is a pullback from $X$; in other words, $\Pic(X)\to \Pic(Y)$ is surjective. It is also injective by the Lefschetz hyperplane theorem.
\end{proof}

In the remaining part of this section, let $X$ be a smooth Fano threefold as in Theorem \ref{thm:Noether-Lefschetz} and let $x\in X$ be a closed point. It is well known (see for example \cite{Fano-book}) that the anti-canonical linear system $|-K_X|$ induces an embedding $X\subseteq \bP^N$. Let $\check{\bP}\subseteq (\bP^N)^*$ be the dual projective space parametrizing hyperplanes of $\bP^N$ containing $x$. To apply Lemma \ref{lem:NL criterion}, we need to find a Lefschetz pencil in $\check{\bP}$. 

\begin{lem} \label{lem:Lefschetz pencil exist}
A general pencil $\ell\subseteq \check{\bP}$ is a Lefschetz pencil.
\end{lem}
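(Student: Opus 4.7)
The goal is to verify the two defining conditions of a Lefschetz pencil $\ell \subseteq \check{\bP}$: (a) the base curve $A \cap X$, where $A = H_0 \cap H_1$ is the axis of $\ell$, is smooth; and (b) at most finitely many members of $\ell$ are singular, each with a unique ordinary node as its only singularity, the node lying off $A$. Both properties will follow from standard dimension counts on appropriate incidence varieties inside $\check{\bP} \times X$.

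For (a), the plan is to apply Bertini twice. Since $X \subseteq \bP^N$ is non-degenerate (being embedded by the complete linear system $|-K_X|$), the sublinear system $\check{\bP} \cong \bP H^0(X,\cO_X(1) \otimes \fm_x)$ has base locus $\{x\}$ on $X$. Hence a general $H_0 \in \check{\bP}$ makes $X \cap H_0$ smooth away from $x$; smoothness at $x$ amounts to $T_xX \not\subseteq H_0$, a codimension-$3$ linear condition in $\check{\bP}$ (as $T_xX$ is a $\bP^3$). For a general second $H_1$, the intersection $X \cap A$ is then smooth off $x$ by another Bertini application, and smooth at $x$ provided $A$ does not contain a plane of $T_xX$---a further codimension-$2$ condition on the pair $(H_0, H_1)$.

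For (b), I introduce the incidence variety
\[
I := \{(H, y) \in \check{\bP} \times X : T_yX \subseteq H\},
\]
where $T_yX \subseteq \bP^N$ is the projective tangent space, a $\bP^3$. The fiber of the second projection over $y$ is a linear subspace of $\check{\bP}$ of dimension $N-5$ when $x \notin T_yX$ and of dimension $N-4$ when $x \in T_yX$; the latter $y$ form a proper closed subset of $X$ (the ramification locus of the linear projection from $x$), so $\dim I \le N-2$. Hence the discriminant $D := \pi_{\check{\bP}}(I)$ has codimension $\ge 1$ in $\check{\bP}$, and a general line $\ell$ meets $D$ in finitely many points. To secure single nodes, I plan to bound two further loci: the sublocus $D_{\mathrm{mult}} \subseteq D$ of hyperplanes with at least two singular points, which is the image of $(I \times_{\check{\bP}} I) \setminus \Delta$ and has dimension at most $N-3$; and the sublocus $D_{\mathrm{worse}} \subseteq D$ of hyperplanes with singular sections of worse-than-nodal type, which is the image under $\pi_{\check{\bP}}$ of the closed subset of $I$ where the Hessian of a local equation of $X \cap H$ at the tangent point is degenerate, and is also of dimension at most $N-3$. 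Both have codimension $\ge 2$ in $\check{\bP}$, so a general line avoids them. Finally, a similar codimension-$\ge 2$ count eliminates the possibility that the node of some $X \cap H_i$ lies on the axis $A$.

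The main obstacle is the codimension-$\ge 2$ bound on $D_{\mathrm{worse}}$, which is equivalent to the classical fact that a smooth non-degenerate projective variety is not dual-defective in the strong sense: the dual variety $X^* \subseteq (\bP^N)^*$ is a hypersurface whose generic point corresponds to a hyperplane section with precisely one ordinary node. For smooth Fano threefolds anticanonically embedded of degree $\ge 6$, this follows from the generic injectivity of the Gauss map of $X$; alternatively, one may invoke the classical existence theorem for Lefschetz pencils inside any generic linear subsystem of $|\cO_{\bP^N}(1)|$, noting that $\check{\bP}$ qualifies as such a subsystem by virtue of step (a).
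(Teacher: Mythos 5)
Your framework---the incidence variety $I$ (the paper's $\check Z$), the discriminant $D$ and its bad strata $D_{\mathrm{mult}}$, $D_{\mathrm{worse}}$, the plan to push codimension-$\ge 2$ bounds through a general pencil---matches the paper's approach in outline, and you correctly flag $D_{\mathrm{worse}}$ as the hard part. But the proposal does not close that gap, and the same issue also undermines the bound you claim for $D_{\mathrm{mult}}$.

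The pivot of the paper's proof is the locus $R = \{y\in X : x\in T_yX\}$, which you mention (as the ramification locus of projection from $x$) but only assert to be a \emph{proper} closed subset, i.e.\ $\dim R\le 2$. That is enough for $\dim I\le N-2$, but not for your codimension-$\ge 2$ bounds: the fibre of $I\to X$ jumps from $\bP^{N-5}$ to $\bP^{N-4}$ over $R$, so if $\dim R = 2$ then $\pr_1^{-1}(R)$ has full dimension $N-2$ in $I$, and the naive ``generic'' dimension counts for the degenerate-Hessian locus and for $(I\times_{\check\bP}I)\setminus\Delta$ can be dominated by excess components lying over $R$. What the paper actually proves is the sharper claim $\dim R\le 1$, and this uses genuinely Fano-specific input: for $\deg X\ge 6$ the threefold $X$ is cut out by quadrics and cubics (Iskovskikh), so if $x\in T_yX$ the line $\overline{xy}$ must lie on $X$, and there are only finitely many lines on $X$ through $x$; the degree $6$ case (intersection of a quadric and a cubic) needs an extra argument with the tangent hyperplane of the quadric at $x$. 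Your proposal supplies no mechanism to bound $R$, and indeed for quartic threefolds with a generalized Eckardt point (excluded by the degree $\ge 6$ hypothesis) $R$ really is $2$-dimensional.

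The two fallbacks you offer do not rescue the argument. Non-dual-defectiveness of $X$ gives $\dim W\le N-2$ inside $(\bP^N)^*$, so slicing by $\check\bP\cong\bP^{N-1}$ a priori only gives $W\cap\check\bP$ of codimension $\ge 1$ in $\check\bP$, not $\ge 2$; the entire point is to rule out components of $W$ that happen to lie inside $\check\bP$. And $\check\bP$ is a \emph{specific} hyperplane of $(\bP^N)^*$ (hyperplanes through the fixed point $x$), not a generic linear subsystem: smoothness of the base curve, which is what your step (a) provides, is a necessary condition for being a Lefschetz pencil but does not promote $\check\bP$ to a generic subsystem for the classical SGA~7 theorem. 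So the argument is missing exactly the geometric input (quadrics/cubics cutting out $X$, finiteness of lines through $x$) that the paper uses to prove $\dim R\le 1$ and, from it, $\dim W_1,\dim W_2\le N-3$.
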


\begin{proof}
We first recall some notation and results from \cite{Voisin-Hodge2}*{Section 2.1.1}. Let $Z\subseteq X\times (\bP^N)^*$ be the algebraic subset defined by
\[
Z=\{(y,H)\,|\,X_H:=X\cap H \mbox{ is singular at } y\}.
\]
Let $\cD_X:={\rm pr}_2(Z)\subseteq (\bP^N)^*$ be the set of singular hyperplane sections and let $\cD_X^0\subseteq \cD_X$ be the subset of hyperplanes $H$ such that $X_H$ has at most one ordinary double point as singularity. Let $W:=\cD_X\setminus \cD_X^0$. By \cite{Voisin-Hodge2}*{Corollary 2.8, and the comments thereafter}, we have $\dim W\le N-2$ and $\check{\bP}$ intersects transversally with $\cD_X^0$ away from $Z_x:={\rm pr}_1^{-1}(x)\subseteq Z$. Since $\dim Z_x = N-4 = \dim \check{\bP} -3$, a general line $\ell\subseteq \check{\bP}$ is disjoint from $Z_x$ and thus intersects transversally with $\cD_X^0$. By \cite{Voisin-Hodge2}*{Proposition 2.9}, $\ell$ is a Lefschetz pencil if and only if $\ell$ is also disjoint from $W$. This would be the case if $W\cap \check{\bP}$ (set-theoretic intersection) has codimension at least $2$ in $\check{\bP}$. Clearly $W\cap \check{\bP}=W_1\cup W_2$ where $W_1$ (resp.\ $W_2$) parametrizes hyperplanes $H$ containing $x$ such that $X_H$ has degenerate singularities (resp.\ more than one ordinary double point). It suffices to show that both $W_i$ ($i=1,2$) have codimension at least $2$ in $\check{\bP}$.

To this end, consider the closed subset 
\[
R:=\{y\in X\,|\,x\in \Lambda_y\}\subseteq X,
\]
where $\Lambda_y:=T_y X\subseteq \bP^N$ denotes the tangent space of $X$ at $y$. We claim that
\begin{equation} \label{eq:dim R<=1}
    \dim R\le 1.
\end{equation}
Assuming this claim for now, let us finish the proof of the lemma. Let $\check{Z}=\pr_2^*\check{\bP}$. By construction, $\check{Z}$ is a divisor in $Z$ and $\pr_1 \colon \check{Z}\to X$ is a $\bP^{N-5}$-bundle away from $R$; in particular, $\check{Z}$ is smooth outside $\pr_1^{-1}(R)$. Note that by \eqref{eq:dim R<=1}, we have 
\begin{equation} \label{eq:dim preimage of R}
    \dim \pr_1^{-1}(R)\le N-3<\dim \check{Z}=N-2.
\end{equation}
Let $(y,H)\in \check{Z}\setminus \pr_1^{-1}(R)$. As in \cite{Voisin-Hodge2}*{Lemma 2.7 and Corollary 2.8}, $X_H$ has an ordinary double point at $y$ if and only if the restriction of $\pr_2$ to $\check{Z}$ is an immersion at $(y,H)$, and in that case $\pr_{2*}(T_{\check{Z},(y,H)})$ can be identified with the linear subspace of $\bC^N=T_{(\bP^N)^*,H}$ consisting of functions that vanish at both $x$ and $y$. By generic smoothness and \eqref{eq:dim preimage of R}, this immediately implies that $\dim W_1\le N-3$. On the other hand, let $H$ be a general smooth point of an irreducible component of $W_2$. By construction, $X_H$ has at least two ordinary double points $y_1,y_2$. Since the Fano threefold $X$ is cut out by quadrics and cubics \cite{Isk-Fano-3fold-II}*{Corollary 2.6}, the line $\ell'$ joining $y_1$ and $y_2$ is contained in $X$, otherwise $\ell'$ has intersection number at least $4$ with one of those quadrics or cubics. It follows that either $x\in \ell'$, in which case $y_1,y_2\in R$ and $H\in \pr_2(\pr_1^{-1}(R))$, or $x\not\in\ell'$, in which case $T_{W_2,H}\subseteq \pr_{2*}(T_{\check{Z},(y_1,H)})\cap \pr_{2*}(T_{\check{Z},(y_2,H)})$ has dimension at most $N-3$. In either case we conclude that $\dim W_2\le N-3$. In other words, both $W_1$ and $W_2$ have codimension at least $2$ in $\check{\bP}$. As explained earlier, this implies the statement of the lemma.

It remains to prove claim \eqref{eq:dim R<=1}. Let $y\in R\setminus \{x\}$. If $\deg X\ge 8$, then since $X$ is cut out by quadrics \cite{Isk-Fano-3fold-II}*{Corollary 2.6} and $x\in \Lambda_y$, the line joining $x$ and $y$ is contained in $X$: otherwise as it is tangent to $X$ at $y$, it has intersection number at least $3$ with one of the quadrics, a contradiction. Since there are only finitely many lines on $X$ passing through $x$ \cite{Fano-book}*{Proposition 4.2.2}, we deduce that $\dim R\le 1$. If $\deg X=6$, then $X$ is the complete intersection of a quadric (denoted by $Q$) and a cubic. Again for any $y\in R\setminus \{x\}$, the line joining $x$ and $y$ is contained in $Q$, thus $y\in H_0$, where $H_0$ is the tangent hyperplane of $Q$ at $x$. It follows that $R\subseteq H_0\cap X$. Since $\Pic(X)=\bZ\cdot [H_0]$, we see that $H_0\cap X$ is irreducible and reduced. As there are only finitely many lines passing through $x$, the linear projection from $x$ is finite on $H_0\cap X$, therefore by generic smoothness, it cannot be ramified everywhere. In other words, there exists some smooth point $y_0\in H_0\cap X$ such that $x\not\in T_{y_0}(H_0\cap X)=H_0\cap T_{y_0} X$, or equivalently, $x\not\in T_{y_0} X$. Thus $R\subsetneq H_0\cap X$ and we deduce that $\dim R\le 1$. This finishes the proof.
\end{proof}

We are now ready to prove Theorem \ref{thm:Noether-Lefschetz}.

\begin{proof}[Proof of Theorem \ref{thm:Noether-Lefschetz}]
By Lemma \ref{lem:Lefschetz pencil exist}, there exists a Lefschetz pencil of hyperplane sections passing through $x$. A smooth member $Y$ of the pencil is a smooth K3 surface, thus $\dim H^{2,0}(Y)=1$; on the other hand, as $X$ is Fano, we have $\dim H^{2,0}(X)=0$. In particular, the map $H^{2,0}(X)\to H^{2,0}(Y)$ is not surjective and the theorem follows immediately from Lemma \ref{lem:NL criterion}.
\end{proof}

\subsection{The index one case} \label{sec:index one}

We are now ready to prove Theorem \ref{thm:Fano-3-fold}. By Theorem \ref{thm:index two}, the remaining case is:

\begin{thm} \label{thm:index one}
Every smooth Fano threefold $X$ of Picard number one, Fano index one and degree $d\le 16$ is uniformly K-stable.
\end{thm}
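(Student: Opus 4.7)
The plan is to prove $\delta_x(X)>1$ at every closed point $x\in X$; since $H=-K_X$ this gives uniform K-stability. For every $x$ there exists a smooth member $S\in|-K_X|$ through $x$ by the classification of prime Fano threefolds, and Theorem \ref{thm:delta and Seshadri} then yields
\[
\delta_x(X)\ge\frac{4\varepsilon_x(H|_S)}{d}.
\]
The strategy is to choose $S$ so that the Seshadri constant is as large as possible, and to bound this constant from below using that $S$ is a K3 surface with polarization of degree $d$.

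For $6\le d\le 14$ I would invoke Theorem \ref{thm:Noether-Lefschetz} to arrange that a very general such $S$ has Picard number one. On a polarized K3 of Picard number one with polarization of degree $d$, every irreducible curve is an integer multiple of $H|_S$, and the adjunction identity $p_a(C)=n^2(d/2)+1$ for $C\sim n H|_S$ sharply constrains the multiplicity of any potential Seshadri curve. Combined with standard Seshadri-constant estimates on such K3s, one then finds $\varepsilon_x(H|_S)>d/4$ in each of these degrees, which gives $\delta_x(X)>1$.

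The main obstacle is the boundary degree $d=16$: here the a priori bound $\varepsilon_x(H|_S)\le\sqrt{16}=4$ forces the Seshadri estimate to give only $\delta_x(X)\ge 1$. To upgrade this to strict inequality I would argue by contradiction. If $\delta_x(X)=1$ were computed by a valuation $v$, then the equality description in Theorem \ref{thm:delta and Seshadri} restricts $C_X(v)$ to have codimension at most two, and Corollary \ref{cor:equality rho=1} (applied with $n=3$) then forces either $A_X(v)/T(H;v)<1/2$ or $A_X(v)/\eta(H;v)\le 1/2$. The first inequality is incompatible with known $\alpha$-invariant lower bounds for prime Fano threefolds of genus nine. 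The second can be excluded by essentially the same movable-pair argument used at the end of the proof of Theorem \ref{thm:index two}: construct two effective $\bQ$-divisors $D_1,D_2\sim_\bQ H$ with no common components whose valuation along $C_X(v)$ is arbitrarily close to $\eta(H;v)$, and derive a contradiction from log canonical threshold estimates on $D_1\cap D_2$ via \cite{dFEM-mult-and-lct} and \cite{Z-bsr-loc-closed}, using that $X$ is cut out by quadrics and that each point of $X$ lies on only finitely many lines.

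For the remaining cases $d\in\{2,4\}$ Theorem \ref{thm:Noether-Lefschetz} does not apply, but the required lower bound on $\varepsilon_x(H|_S)$ is correspondingly weaker ($>1/2$ for $d=2$ and $>1$ for $d=4$), so a direct analysis of Seshadri constants on a suitable anticanonical K3 section suffices; alternatively one may simply invoke the previously known uniform K-stability of sextic double solids and quartic threefolds recorded in the introduction. The hard part is therefore the degree-16 argument, where the boundary estimates have to be promoted to strict inequalities through the equality analysis developed in Subsection~\ref{sec:equality conditions}.
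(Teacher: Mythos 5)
Your overall architecture matches the paper's: establish $\delta_x(H)\ge \frac{4\varepsilon_x(H_S)}{d}$ via Theorem~\ref{thm:delta and Seshadri}, use Theorem~\ref{thm:Noether-Lefschetz} to put a very general anticanonical K3 section through $x$ with Picard number one, bound $\tau_x(H_S)\le 4$ via the genus formula on the K3 (the paper does this explicitly: if $C\sim mH_S$ is integral with $\mult_x C\ge 4m+1$ then $dm^2=2p_a(C)-2\ge 4m(4m+1)-2>16m^2$), and then push the boundary case $d=16$ from semistability to strict stability through the equality analysis in Section~\ref{sec:equality conditions}. The $d\le4$ cases in the paper are even simpler than you suggest: base-point-freeness of $|-K_X|$ directly gives $\varepsilon_x(H_S)\ge 1$ and then the equality analysis of Theorem~\ref{thm:delta and Seshadri} suffices.

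The genuine divergence is in the $d=16$ boundary argument. You propose to apply Corollary~\ref{cor:equality rho=1} uniformly to every curve $C=C_X(v)$ and to kill the alternative $A_X(v)/T(H;v)<\frac12$ by citing ``known $\alpha$-invariant lower bounds for prime Fano threefolds of genus nine.'' That is not what the paper does. The paper splits into $\deg C\ge 2$ and $\deg C=1$: for lines it indeed uses Corollary~\ref{cor:equality rho=1}, ruling out $A_X(v)<\frac12 T(H;v)$ by the explicit structural results of \cite{Fano-book}*{Theorems~4.3.3(vii), Remark~4.3.4, 4.3.7(iii)} about anticanonical divisors of high multiplicity along a line, and ruling out $A_X(v)\le\frac12\eta(H;v)$ by the movable-pair argument with \cite{Z-bsr-loc-closed}. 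But for curves of degree $\ge 2$ the paper takes an entirely different route: it proves $\delta_C(X)>1$ directly by refining along a very general $T\in|H|$ with $\rho(T)=1$, taking $G\in|H_T|$ disjoint from $T\cap C$, and running a multiplier-ideal/Nadel-vanishing argument imported from \cite{Z-cpi} (using that the multiplier ideal $\cJ(T,4\eta_m\Gamma)$ is zero-dimensional and bounding its colength by $h^0(T,2H_T)=34$). Your shortcut is appealing, and the movable-pair half does work with the same dFEM-type estimate (for $\deg C\ge 2$ one gets $e_C(\cI_Z)\le 8m^2$, hence $\lct_C\ge\tfrac{1}{\sqrt{2}m}$ and $A_X(v)\ge\tfrac{1-\varepsilon}{\sqrt2}\eta>\tfrac12\eta$), but the first alternative is where you have a gap: you assert $\alpha(X)\ge\tfrac12$ for degree-16 prime Fano threefolds without justification, whereas the paper deliberately avoids invoking a global $\alpha$-bound and instead uses the line-specific structure of high-multiplicity divisors. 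Note also that the Remark after Lemma~\ref{lem:equality in adjunction} already gives $\alpha(H)\le\tfrac{n-1}{n+1}\delta(H)=\tfrac12$ in this situation, so the inequality you need is exactly at the borderline $\alpha(X)\ge\tfrac12$; unless you can cite a precise reference (e.g.\ from \cite{CS-lct-3fold}) that covers genus nine for \emph{every} smooth $X$ in the family, this step is not complete.

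In short: the skeleton is right, the $6\le d\le14$ argument and the second (movable-threshold) alternative for $d=16$ match the paper, but you have replaced the paper's multiplier-ideal treatment of degree-$\ge2$ curves and its Iskovskikh--Prokhorov-based treatment of lines with an unverified appeal to global $\alpha$-invariants. Either verify $\alpha(X)\ge\tfrac12$ precisely, or follow the paper's case split and cite the structural results on multiplicities along lines.
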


\begin{proof}
Let $H=-K_X$. We will denote by $H_Y$ the restriction of the hyperplane class $H$ to a subvariety $Y\subseteq X$. Let $x\in X$ and let $S\in |H|$ be a very general hyperplane section containing $x$. When $d\le 4$, it is easy to see that $\varepsilon_x(H_S)\ge 1$ since $H$ is base point free, hence $\delta_x(H)>1$ and $X$ is uniformly K-stable by Theorem \ref{thm:delta and Seshadri} as in the proof of Theorem \ref{thm:index two}. Thus for the rest of the proof we may assume that $d\ge 6$. By Theorem \ref{thm:Noether-Lefschetz}, $\Pic(S)=\bZ\cdot [H_S]$. 

We claim that $\tau_x(H_S)\le 4$. Suppose not, then for some integer $m>0$ there exists an integral curve $C\sim -mK_X|_S$ containing $x$ such that $\mult_x C>4m$ (we can assume $C$ is integral since $S$ has Picard number one). Since $\mult_x C$ is an integer, we have $\mult_x C \ge 4m+1$. By adjunction, $K_S\sim 0$. It follows that
\begin{align*}
    16m^2\ge dm^2 & = (K_S+C)\cdot C = 2p_a(C)-2 \\
     & \ge \mult_x C\cdot (\mult_x C-1)-2\ge 4m(4m+1)-2 > 16m^2,
\end{align*}
a contradiction. Hence $\tau_x(H_S)\le 4$ and by Lemma \ref{lem:Seshadri*pseudoeff} and Theorem \ref{thm:delta and Seshadri} we obtain $\delta_x(H)\ge \frac{4}{\tau_x(H_S)}\ge 1$. Moreover, when equality holds, we have $\varepsilon_x(H_S)=\tau_x(H_S)=4$ or $H\sim_\bQ 4G$ for some prime divisor $G$ on $X$. The latter case cannot occur since $H$ is a primitive generator of $\Pic(X)$. By Lemma \ref{lem:Seshadri*pseudoeff}, the former case can only happen when $d=16$. This proves that $X$ is uniformly K-stable when $d\le 14$ and is K-semistable when $d=16$.

It remains to analyze the equality conditions when $X$ has degree $d=16$. Suppose that $\delta_x(X)=1$, then by Theorem \ref{thm:delta and Seshadri} there exists some valuation $v$ with positive dimensional center on $X$ such that $A_X(v)=S(H;v)$. Since $\Pic(X)=\bZ\cdot [H]$, using Lemma \ref{lem:S-inv of div on X} it is easy to see that $S(H;D)\le \frac{1}{4}<1=A_X(D)$ for any prime divisor $D$ on $X$ (c.f. the proof of Theorem \ref{thm:index two}), hence $C_X(v)$ cannot be a surface and must be a curve.



Suppose first that $C$ has degree $(H\cdot C)\ge 2$. We claim that in this case $\delta_C(X)>1$. To see this, let $T\in |H|$ be a very general hyperplane section such that $\Pic(T)=\bZ\cdot[H_T]$ and that $T\cap C$ consists of at least two points, and let $G\in |H_T|$ be a general hyperplane section on $T$ that is disjoint from $T\cap C$. Let $\W$ be the refinement by $T$ of the complete linear series associated to $H$. Note that $\W$ is almost complete, $F(\W)=0$ and $c_1(\W)=\frac{3}{4}H_T$ by \cite{AZ-K-adjunction}*{(3.1)}. Consider the admissible flag $\Y$ on $X$ given by $Y_0=X$, $Y_1=T$ and $Y_2=G$. By \cite{AZ-K-adjunction}*{Theorem 3.5}, we have
\begin{equation} \label{eq:adj along curve deg>=2}
    \delta_C(X;-K_X)\ge \min\left\{\frac{A_X(T)}{S(H;T)},\delta_{T\cap C}(T;\W,\cF)\right\}=\min\{4,\delta_{T\cap C}(T;\W,\cF)\},
\end{equation}
where $\cF$ is the filtration induced by the curve $G$. In particular, $\delta_C(X)>1$ as long as we have $\delta_{T\cap C}(T;\W,\cF)>1$.

We show that this is indeed the case. Let $m\in\bN$ and let $D$ be an $m$-basis type $\bQ$-divisor of $\W$ that is compatible with $\cF$. Then we have
\[
D=S_m(\W;G)\cdot G + \Gamma
\]
for some effective $\bQ$-divisor $\Gamma$. As $G$ is disjoint from $T\cap C$, it is clear that
\begin{equation} \label{eq:lct equal}
    \lct_{T\cap C}(T;D)=\lct_{T\cap C}(T;\Gamma).
\end{equation}
Since
\[
\lim_{m\to\infty} S_m(\W;G)=S(\W;G)=\frac{3}{4}S(H_T;G)=\frac{1}{4}
\]
by \cite{AZ-K-adjunction}*{Lemma 2.29}, we see that $\Gamma\sim_\bQ c_1(\W)-S_m(\W;G)\cdot G\sim_\bQ \lambda_m H_T$ for some $\lambda_m\ge 0$ with $\lim_{m\to\infty} \lambda_m=\frac{1}{2}$. By \cite{AZ-K-adjunction}*{Lemma 2.21} and the last part of its proof, we also know that there exists some $\eta_m\in (0,1)$ with $\lim_{m\to\infty} \eta_m = 1$ such that $\eta_m \cdot S_m(\W;F) < S(\W;F)$ for any divisor $F$ over $T$. In particular, we have 
\[
\ord_F(\eta_m \Gamma)\le \eta_m \cdot S_m(\W;F) < S(\W;F)\le \frac{1}{4}
\]
for any irreducible curve $F\subseteq T$ (recall that $\Pic(T)$ is generated by $H_T$, thus by \cite{AZ-K-adjunction}*{Lemma 2.29}, $S(\W;F)=\frac{3}{4}S(H_T;F)=\frac{1}{4r}$ if $F\sim rH_T$). Perturbing the $\eta_m$, we may also assume that $\eta_m\lambda_m<\frac{1}{2}$. It follows that $(T,4\eta_m\Gamma)$ is klt outside a finite number of points and $2H_T-(K_T+4\eta_m\Gamma)$ is ample (recall that $K_T=0$ by adjunction). 

We now apply an argument from \cite{Z-cpi} to estimate $\lct_{T\cap C}(T,\Gamma)$. More precisely, let $\cJ=\cJ(T,4\eta_m\Gamma)$ be the multiplier ideal, which is co-supported at a finite number of points by the previous step. By Nadel vanishing, $H^1(T,\cJ(2H_T))=0$, hence $\ell(\cO_T/\cJ)\le h^0(T,2H_T)=2(H_T^2)+2=34$. As $|T\cap C|\ge 2$, we see that $\ell(\cO_{T,x}/\cJ_x)\le 17$ for some $x\in T\cap C$. On the other hand, by \cite{Z-cpi}*{Lemmas 3.4 and 5.2}, we have $\lct_x(T,\fa)>\frac{1}{3}$ for any ideal $\fa\subseteq\cO_{T,x}$ with $\ell(\cO_{T,x}/\fa)\le 21\le \bar{\sigma}_{2,3}$ (we follow the notation of \cite{Z-cpi}). Since such ideals $\fa$ can be parametrized by some scheme of finite type and the log canonical thresholds are constructible in families, we deduce that there exists some absolute constant $\alpha>\frac{1}{3}$ such that $\lct_x(T,\fa)>\alpha$ for all $\fa\subseteq\cO_{T,x}$ with $\ell(\cO_{T,x}/\fa)\le 21$. In particular, we have $\lct_x(T,\cJ)\ge\alpha$. By \cite{Z-cpi}*{Theorem 1.6 and Remark 3.1}, we then have $\lct_x(T,4\eta_m\Gamma)\ge \frac{\alpha}{\alpha+1}$ and therefore
\[
\lct_{T\cap C}(T,\Gamma)\ge \lct_x(T,\Gamma)\ge \frac{4\alpha\eta_m}{\alpha+1}.
\]
Combined with \eqref{eq:lct equal} and letting $m\to \infty$, we obtain
\[
\delta_{T\cap C}(T;\W,\cF)\ge \frac{4\alpha}{\alpha+1}>1.
\]
By \eqref{eq:adj along curve deg>=2}, this implies that $\delta_C(X)>1$ for any curve $C\subseteq X$ of degree at least $2$.

Hence the only remaining possibility for $C_X(v)$ is a line $L$ on $X$. The rest of the argument is similar to those of Theorem \ref{thm:index two}.
By Corollary \ref{cor:equality rho=1}, we have $A_X(v)<\frac{1}{2}T(H;v)$ or $A_X(v)\le\frac{1}{2}\eta(H;v)$. In the former case, there exists some $\bQ$-divisor $0\le D\sim_\bQ H$ such that $\lct_L(X;D) < \frac{1}{2}$. Since $X$ has Picard number one, we may further assume that $D$ is irreducible. Note that $\mult_L D>2$, otherwise $(X,\frac{1}{2}D)$ is log canonical \cite{Kol-mmp}*{Theorem 2.29}. By \cite{Fano-book}*{Theorems 4.3.3(vii), Remark 4.3.4 and 4.3.7(iii)}, $2D$ is integral and $\mult_L D=\frac{5}{2}$. Moreover, if $\rho\colon \tX\to X$ is the ordinary blowup of the line $L$ with exceptional divisor $F$ and $D'$ is the strict transform of $D$, then the scheme theoretic intersection $G=2D'\cap F$ contains a non-hyperelliptic curve $\Gamma$, $\rho|_G\colon G \to L$ is finite of degree $5$ and
\[
\rho^*(K_X+\frac{1}{2}D)=K_{\tX}+\frac{1}{4}F+\frac{1}{2}D'.
\]
It follows that $\rho|_\Gamma\colon \Gamma\to L$ has degree at least $3$ and each component of $G\setminus \Gamma$ is different from $\Gamma$ and has multiplicity at most $2$. From here we deduce that every component of $\frac{1}{2}D'\cap F$ has multiplicity $\le \frac{1}{2}$ and by inversion of adjunction we see that $(\tX,\frac{1}{4}F+\frac{1}{2}D')$ is klt over the generic point of $L$ and the same is true for $(X,\frac{1}{2}D)$. But this is a contradiction as $\lct_L(X;D)<\frac{1}{2}$. Therefore we must have $A_X(v)\le\frac{1}{2}\eta(H;v)$. For any $0<\varepsilon\ll 1$ we can find effective $\bQ$-divisors $D_1,D_2\sim_\bQ H$ without common components such that $v(D_i)>(1-\varepsilon)\eta(H;v)$. Let $m$ be a sufficiently divisible integer and let $Z\subseteq X$ be the complete intersection subscheme $mD_1\cap mD_2$. Note that $\mult_L Z\le \deg Z=16m^2$ and by \cite{Fano-book}*{Theorems 4.3.3(vii)}, we have $\mult_C D_i < \frac{5}{2}<4$ for some $i=1,2$. Hence $\cI_Z\not\subseteq \cI_C^{\frac{5}{2}m}$ and by \cite{Z-bsr-loc-closed}*{Lemma 2.6}, applied at the generic point of $C$, we see that there exists some absolute constant $\varepsilon_1>0$, which does not depend on $D_i$ and $\varepsilon$, such that $\lct_C(X;\cI_Z)>\frac{1+\varepsilon_1}{2m}$. It then follows as in the proof of Theorem \ref{thm:index two} that $A_X(v)>\frac{1+\varepsilon_1}{2m} v(\cI_Z) >\frac{1}{2}\eta(H;v)$, a contradiction.

Therefore we conclude that the inequality $\delta_x(H)\ge 1$ is always strict. In other words, $X$ is uniformly K-stable.
\end{proof}

\begin{proof}[Proof of Theorem \ref{thm:Fano-3-fold}]
This is just a combination of Theorems \ref{thm:index two} and \ref{thm:index one}.
\end{proof}

\begin{bibdiv}
\begin{biblist}

\bib{AGP}{article}{
      author={Arezzo, Claudio},
      author={Ghigi, Alessandro},
      author={Pirola, Gian~Pietro},
       title={Symmetries, quotients and {K}\"ahler-{E}instein metrics},
        date={2006},
     journal={J. Reine Angew. Math.},
      volume={591},
       pages={177\ndash 200},
}

\bib{AZ-K-adjunction}{article}{
      author={Abban, Hamid},
      author={Zhuang, Ziquan},
       title={K-stability of {F}ano varieties via admissible flags},
        date={2020},
  note={\href{https://arxiv.org/abs/2003.13788}{\textsf{arXiv:2003.13788}}},
}

\bib{BBJ-variational}{article}{
      author={Berman, Robert},
      author={Boucksom, Sebastian},
      author={Jonsson, Mattias},
       title={A variational approach to the {Y}au-{T}ian-{D}onaldson
  conjecture},
        date={2021},
     journal={J. Amer. Math. Soc.},
      volume={34},
      number={3},
       pages={605\ndash 652},
}

\bib{BdFFU-valuation}{incollection}{
      author={Boucksom, S\'{e}bastien},
      author={de~Fernex, Tommaso},
      author={Favre, Charles},
      author={Urbinati, Stefano},
       title={Valuation spaces and multiplier ideals on singular varieties},
        date={2015},
   booktitle={Recent advances in algebraic geometry},
      series={London Math. Soc. Lecture Note Ser.},
      volume={417},
   publisher={Cambridge Univ. Press, Cambridge},
       pages={29\ndash 51},
}

\bib{BFJ-volume-C^1}{article}{
      author={Boucksom, S\'{e}bastien},
      author={Favre, Charles},
      author={Jonsson, Mattias},
       title={Differentiability of volumes of divisors and a problem of
  {T}eissier},
        date={2009},
     journal={J. Algebraic Geom.},
      volume={18},
      number={2},
       pages={279\ndash 308},
}

\bib{BJ-delta}{article}{
      author={Blum, Harold},
      author={Jonsson, Mattias},
       title={Thresholds, valuations, and {K}-stability},
        date={2020},
     journal={Adv. Math.},
      volume={365},
       pages={107062},
}

\bib{BSKM-linear-growth}{article}{
      author={Boucksom, S\'{e}bastien},
      author={K\"{u}ronya, Alex},
      author={Maclean, Catriona},
      author={Szemberg, Tomasz},
       title={Vanishing sequences and {O}kounkov bodies},
        date={2015},
     journal={Math. Ann.},
      volume={361},
      number={3-4},
       pages={811\ndash 834},
}

\bib{BLX-openness}{article}{
      author={Blum, Harold},
      author={Liu, Yuchen},
      author={Xu, Chenyang},
       title={Openness of {K}-semistability for {F}ano varieties},
        date={2019},
        note={To appear in Duke Math. J.
  \href{https://arxiv.org/abs/1907.02408}{\textsf{arXiv:1907.02408}}},
}

\bib{Bro-dP-Seshadri}{article}{
      author={Broustet, Ama\"{e}l},
       title={Constantes de {S}eshadri du diviseur anticanonique des surfaces
  de del {P}ezzo},
        date={2006},
     journal={Enseign. Math. (2)},
      volume={52},
      number={3-4},
       pages={231\ndash 238},
}

\bib{CDS}{article}{
      author={Chen, Xiuxiong},
      author={Donaldson, Simon},
      author={Sun, Song},
       title={K\"ahler-{E}instein metrics on {F}ano manifolds, {I-III}},
        date={2015},
     journal={J. Amer. Math. Soc.},
      volume={28},
      number={1},
       pages={183\ndash 197, 199\ndash 234, 235\ndash 278},
}

\bib{CS-lct-3fold}{article}{
      author={{Cheltsov}, Ivan},
      author={Shramov, Konstantin},
       title={Log-canonical thresholds for nonsingular {F}ano threefolds},
        date={2008},
     journal={Uspekhi Mat. Nauk},
      volume={63},
      number={5(383)},
       pages={73\ndash 180},
}

\bib{Dem-Seshadri}{incollection}{
      author={Demailly, Jean-Pierre},
       title={Singular {H}ermitian metrics on positive line bundles},
        date={1992},
   booktitle={Complex algebraic varieties ({B}ayreuth, 1990)},
      series={Lecture Notes in Math.},
      volume={1507},
   publisher={Springer, Berlin},
       pages={87\ndash 104},
}

\bib{Der-finite-cover}{article}{
      author={Dervan, Ruadha\'{\i}},
       title={On {K}-stability of finite covers},
        date={2016},
     journal={Bull. Lond. Math. Soc.},
      volume={48},
      number={4},
       pages={717\ndash 728},
}

\bib{dFEM-mult-and-lct}{article}{
      author={de~Fernex, Tommaso},
      author={Ein, Lawrence},
      author={Musta{\c{t}}{\u{a}}, Mircea},
       title={Multiplicities and log canonical threshold},
        date={2004},
     journal={J. Algebraic Geom.},
      volume={13},
      number={3},
       pages={603\ndash 615},
}

\bib{DGF-Noether-Lefschetz}{article}{
      author={Di~Gennaro, Vincenzo},
      author={Franco, Davide},
       title={Noether-{L}efschetz theory with base locus},
        date={2014},
     journal={Rend. Circ. Mat. Palermo (2)},
      volume={63},
      number={2},
       pages={257\ndash 276},
}

\bib{SGA7II}{book}{
      author={Deligne, Pierre},
      author={Katz, Nicholas},
       title={Groupes de monodromie en g\'{e}om\'{e}trie alg\'{e}brique. {II}},
      series={Lecture Notes in Mathematics, Vol. 340},
   publisher={Springer-Verlag, Berlin-New York},
        date={1973},
        note={S\'{e}minaire de G\'{e}om\'{e}trie Alg\'{e}brique du Bois-Marie
  1967--1969 (SGA 7 II), Dirig\'{e} par P. Deligne et N. Katz},
}

\bib{Don-openness}{incollection}{
      author={Donaldson, Simon},
       title={Algebraic families of constant scalar curvature {K}\"{a}hler
  metrics},
        date={2015},
   booktitle={Surveys in differential geometry 2014. {R}egularity and evolution
  of nonlinear equations},
      series={Surv. Differ. Geom.},
      volume={19},
   publisher={Int. Press, Somerville, MA},
       pages={111\ndash 137},
}

\bib{EKL-Seshadri}{article}{
      author={Ein, Lawrence},
      author={K\"{u}chle, Oliver},
      author={Lazarsfeld, Robert},
       title={Local positivity of ample line bundles},
        date={1995},
     journal={J. Differential Geom.},
      volume={42},
      number={2},
       pages={193\ndash 219},
}

\bib{ELMNP}{article}{
      author={Ein, Lawrence},
      author={Lazarsfeld, Robert},
      author={Musta\c{t}\u{a}, Mircea},
      author={Nakamaye, Michael},
      author={Popa, Mihnea},
       title={Restricted volumes and base loci of linear series},
        date={2009},
     journal={Amer. J. Math.},
      volume={131},
      number={3},
       pages={607\ndash 651},
}

\bib{FKL-volume}{article}{
      author={Fulger, Mihai},
      author={Koll\'{a}r, J\'{a}nos},
      author={Lehmann, Brian},
       title={Volume and {H}ilbert function of {$\mathbb{R}$}-divisors},
        date={2016},
     journal={Michigan Math. J.},
      volume={65},
      number={2},
       pages={371\ndash 387},
}

\bib{FO-delta}{article}{
      author={Fujita, Kento},
      author={Odaka, Yuji},
       title={On the {K}-stability of {F}ano varieties and anticanonical
  divisors},
        date={2018},
     journal={Tohoku Math. J. (2)},
      volume={70},
      number={4},
       pages={511\ndash 521},
}

\bib{Fuj-alpha}{article}{
      author={Fujita, Kento},
       title={K-stability of {F}ano manifolds with not small alpha invariants},
        date={2019},
     journal={J. Inst. Math. Jussieu},
      volume={18},
      number={3},
       pages={519\ndash 530},
}

\bib{Fuj-criterion}{article}{
      author={Fujita, Kento},
       title={A valuative criterion for uniform {K}-stability of
  {$\mathbb{Q}$}-{F}ano varieties},
        date={2019},
     journal={J. Reine Angew. Math.},
      volume={751},
       pages={309\ndash 338},
}

\bib{Fano-book}{incollection}{
      author={Iskovskikh, Vasily},
      author={Prokhorov, Yuri},
       title={Fano varieties},
        date={1999},
   booktitle={Algebraic geometry, {V}},
      series={Encyclopaedia Math. Sci.},
      volume={47},
   publisher={Springer, Berlin},
       pages={1\ndash 247},
}

\bib{Isk-Fano-3fold-II}{article}{
      author={Iskovskikh, Vasily},
       title={Fano threefolds. {II}},
        date={1978},
     journal={Izv. Akad. Nauk SSSR Ser. Mat.},
      volume={42},
      number={3},
       pages={506\ndash 549},
}

\bib{JM-valuation}{article}{
      author={Jonsson, Mattias},
      author={Musta\c{t}\u{a}, Mircea},
       title={Valuations and asymptotic invariants for sequences of ideals},
        date={2012},
     journal={Ann. Inst. Fourier (Grenoble)},
      volume={62},
      number={6},
       pages={2145\ndash 2209 (2013)},
}

\bib{Knu-K3-Seshadri}{article}{
      author={Knutsen, Andreas~Leopold},
       title={A note on {S}eshadri constants on general {$K3$} surfaces},
        date={2008},
     journal={C. R. Math. Acad. Sci. Paris},
      volume={346},
      number={19-20},
       pages={1079\ndash 1081},
}

\bib{Kol-mmp}{book}{
      author={Koll{\'a}r, J{\'a}nos},
       title={Singularities of the minimal model program},
      series={Cambridge Tracts in Mathematics},
   publisher={Cambridge University Press, Cambridge},
        date={2013},
      volume={200},
        note={With a collaboration of S{\'a}ndor Kov{\'a}cs},
}

\bib{Laz-positivity-1}{book}{
      author={Lazarsfeld, Robert},
       title={Positivity in algebraic geometry. {I}},
   publisher={Springer-Verlag, Berlin},
        date={2004},
      volume={48},
}

\bib{Laz-positivity-2}{book}{
      author={Lazarsfeld, Robert},
       title={Positivity in algebraic geometry. {II}},
   publisher={Springer-Verlag, Berlin},
        date={2004},
      volume={49},
        note={Positivity for vector bundles, and multiplier ideals},
}

\bib{Li-criterion}{article}{
      author={Li, Chi},
       title={K-semistability is equivariant volume minimization},
        date={2017},
     journal={Duke Math. J.},
      volume={166},
      number={16},
       pages={3147\ndash 3218},
}

\bib{Liu-cubic-4-fold}{article}{
      author={Liu, Yuchen},
       title={K-stability of cubic fourfolds},
        date={2020},
        note={To appear in J. Reine Angew. Math. (Crelle's Journal).
  \href{https://arxiv.org/abs/2007.14320}{\textsf{arXiv:2007.14320}}},
}

\bib{LM-okounkov-body}{article}{
      author={Lazarsfeld, Robert},
      author={Musta\c{t}\u{a}, Mircea},
       title={Convex bodies associated to linear series},
        date={2009},
     journal={Ann. Sci. \'{E}c. Norm. Sup\'{e}r. (4)},
      volume={42},
      number={5},
       pages={783\ndash 835},
}

\bib{LX-cubic-3fold}{article}{
      author={Liu, Yuchen},
      author={Xu, Chenyang},
       title={K-stability of cubic threefolds},
        date={2019},
     journal={Duke Math. J.},
      volume={168},
      number={11},
       pages={2029\ndash 2073},
}

\bib{LXZ-HRFG}{article}{
      author={Liu, Yuchen},
      author={Xu, Chenyang},
      author={Zhuang, Ziquan},
       title={Finite generation for valuations computing stability thresholds
  and applications to {K}-stability},
        date={2021},
        note={To appear in Ann. of Math.
  \href{https://arxiv.org/abs/2102.09405}{\textsf{arXiv:2102.09405}}},
}

\bib{Oda-openness}{incollection}{
      author={Odaka, Yuji},
       title={On the moduli of {K}\"{a}hler-{E}instein {F}ano manifold},
        date={2013},
   booktitle={Proceeding of kinosaki symposium},
       pages={112\ndash 126},
}

\bib{Odaka-Sano}{article}{
      author={Odaka, Yuji},
      author={Sano, Yuji},
       title={Alpha invariant and {K}-stability of {$\mathbb{Q}$}-{F}ano
  varieties},
        date={2012},
     journal={Adv. Math.},
      volume={229},
      number={5},
       pages={2818\ndash 2834},
}

\bib{Tian-Fermat}{book}{
      author={Tian, Gang},
       title={Canonical metrics in {K}\"{a}hler geometry},
      series={Lectures in Mathematics ETH Z\"{u}rich},
   publisher={Birkh\"{a}user Verlag, Basel},
        date={2000},
        note={Notes taken by Meike Akveld},
}

\bib{Tian-YTD}{article}{
      author={Tian, Gang},
       title={K-stability and {K}\"ahler-{E}instein metrics},
        date={2015},
     journal={Comm. Pure Appl. Math.},
      volume={68},
      number={7},
       pages={1085\ndash 1156},
}

\bib{Tian-K-stability-defn}{article}{
      author={Tian, Gang},
       title={K\"ahler-{E}instein metrics with positive scalar curvature},
        date={1997},
     journal={Invent. Math.},
      volume={130},
      number={1},
       pages={1\ndash 37},
}

\bib{Voisin-Hodge2}{book}{
      author={Voisin, Claire},
       title={Hodge theory and complex algebraic geometry. {II}},
      series={Cambridge Studies in Advanced Mathematics},
   publisher={Cambridge University Press, Cambridge},
        date={2003},
      volume={77},
}

\bib{Xu-survey}{article}{
      author={Xu, Chenyang},
       title={K-stability of {F}ano varieties: an algebro-geometric approach},
        date={2020},
  note={\href{https://arxiv.org/abs/2011.10477}{\textsf{arXiv:2011.10477}}},
}

\bib{Xu-quasimonomial}{article}{
      author={Xu, Chenyang},
       title={A minimizing valuation is quasi-monomial},
        date={2020},
     journal={Ann. of Math. (2)},
      volume={191},
      number={3},
       pages={1003\ndash 1030},
}

\bib{Z-cpi}{article}{
      author={Zhuang, Ziquan},
       title={Birational superrigidity and {$K$}-stability of {F}ano complete
  intersections of index 1},
        date={2020},
     journal={Duke Math. J.},
      volume={169},
      number={12},
       pages={2205\ndash 2229},
        note={With an appendix by Zhuang and Charlie Stibitz},
}

\bib{Z-bsr-loc-closed}{article}{
      author={Zhuang, Ziquan},
       title={Birational superrigidity is not a locally closed property},
        date={2020},
     journal={Selecta Math. (N.S.)},
      volume={26},
      number={1},
       pages={Paper No. 11, 20},
}

\bib{Z-product}{article}{
      author={Zhuang, Ziquan},
       title={Product theorem for {K}-stability},
        date={2020},
     journal={Adv. Math.},
      volume={371},
       pages={107250, 18},
}

\bib{Z-equivariant}{article}{
      author={Zhuang, Ziquan},
       title={Optimal destabilizing centers and equivariant {K}-stability},
        date={2021},
     journal={Invent. Math.},
      volume={226},
      number={1},
       pages={195\ndash 223},
}

\end{biblist}
\end{bibdiv}

\end{document}